\newtheorem{thm}{Theorem}[section]
\newtheorem{cor}[thm]{Corollary}
\newtheorem{exmp}[thm]{Example}
\newtheorem{lem}[thm]{Lemma}
\theoremstyle{definition}
\theoremstyle{remark}
\newtheorem{rem}[thm]{Remark}
\numberwithin{equation}{section}
\begin{document}
\title{The continuity method on Fano fibrations}
\author{Yashan Zhang}
\address{Beijing International Center for Mathematical Research, Peking University, Beijing, China}
\email{yashanzh@pku.edu.cn}
\thanks{Y. Zhang is partially supported by the Science and Technology Development Fund (Macao S.A.R.) Grant FDCT/ 016/2013/A1 and the Project MYRG2015-00235-FST of the University of Macau}

\author{Zhenlei Zhang}
\address{Department of mathematics, Capital Normal University, Beijing, China}
\email{zhleigo@aliyun.com}
\thanks{Z. Zhang is partially supported by NSFC 11431009}

\begin{abstract}
We study finite-time collapsing limits of the continuity method. When the continuity method starting from a rational initial K\"ahler metric on a projective manifold encounters a finite-time volume collapsing, this projective manifold admits a Fano fibration over a lower dimensional base. In this case, we prove the continuity method converges to a singular K\"ahler metric on the base in the weak sense; moreover, if the base is smooth and the fibration has no singular fibers, then the convergence takes place in Gromov-Hausdorff topology.
\end{abstract}

\maketitle
\section{Motivation and main result}
In \cite{LT}, La Nave and Tian introduced a new approach to the Analytic Minimal Model Program. It is a continuity method of complex Monge-Amp\`ere equations.

\par Let $X$ be an $n$-dimensional projective manifold with an ample $\mathbb Q$-line bundle $L$. For any fixed K\"ahler metric $\omega_0\in2\pi c_1(L)$, we consider the following continuity method on $X$ introduced by La Nave and Tian in \cite{LT} (also see Rubinstein \cite{Ru}):
\begin{equation}\label{lt.0}
\left\{
\begin{aligned}
\omega(t)&=\omega_0-tRic(\omega(t))\\
\omega(0)&=\omega_0.
\end{aligned}
\right.
\end{equation}
It is proved in \cite[Theorem 1.1]{LT} that the maximal existence time of (\ref{lt.0}) is

\begin{center}
$T=\sup\{t>0|[\omega_0]-2\pi tc_1(X)>0\}=\sup\{t>0|L+tK_X$ is ample$\}$.\\
\end{center}

According to the Analytic Minimal Model Program proposed in \cite{LT}, there are several independent cases to consider.
\begin{itemize}
\item[(1)] $T=\infty$. Then $X$ is a minimal model and it is conjectured that, after a suitable normalization, the continuity method should converge in Gromov-Hausdorff topology to a generalized K\"ahler-Einstein metric on the canonical model of $X$, see \cite{LT} for detailed descriptions and \cite{FGS,LTZ,ZyZz} for some progresses.
\item[(2)] $T<\infty$. By \cite[Theorem 1.1, Corollary 2.2]{LT} we know $T<\infty$ means $K_X$ is not nef (or $c_1(X)$ contains some ``positive part"). In this case, in general it is expected that the continuity method will contract/collapse certain ``positive part" of $c_1(X)$ in Gromov-Hausdorff topology. To be precise, we recall some facts from algebraic geometry. By the rationality theorem (see e.g. \cite{Ma}) we know $T\in\mathbb Q$. We always assume without loss of any generality that $T=1$. By the base-point-free theorem (see e.g. \cite{Ma}) we know the limiting $\mathbb Q$-line bundle $L+K_X$ is semi-ample. In this case, we have a holomorphic map
\begin{equation}\label{fibration}
f:X\to Y\subset\mathbb{CP}^N
\end{equation}
induced by the linear system of $m(L+K_X)$ for some sufficiently large integer $m$ (see e.g. \cite{La}). Here $Y=f(X)$ is an irreducible normal projective variety and $k:=dim(Y)$ equals the Iitaka dimension of $L+K_X$. We know $k\in\{0,1,\ldots,n\}$. Let's separate discussions into several cases as follows.
\begin{itemize}
\item[(2.1)] $k=0$ (finite-time extinction). Then $Y$ is a single point and hence $X$ is in fact a Fano manifold with $L=-K_X$. In this case, the continuity method \eqref{lt.0} starting from any K\"ahler metric $\omega_0\in2\pi c_1(L)=2\pi c_1(X)$ collapses to a point in Gromov-Hausdorff topology. In fact, in this case it is natural to consider a normalized version of \eqref{lt.0} as follows
\begin{equation}\label{lt.5}
(1-t)\omega(t)=\omega_0-tRic(\omega(t)).
\end{equation}
By \cite[Theorem 1.1]{LT}, \eqref{lt.5} has a unique smooth solution $\omega(t)$ for $t\in[0,1)$. Moreover, by using some well-known results in complex Monge-Amp\`{e}re equations (see e.g. \cite[Section 3.1]{Ch} for detailed discussions) we know that, as $t\to1^-$, $\omega(t)$ solving \eqref{lt.5} converges in $C^\infty(X,\omega_0)$-topology to a unique K\"ahler metric $\omega(1)\in2\pi c_1(X)$ satisfying $Ric(\omega(1))=\omega_0$. Then the time $t$ will go through $1$ and \eqref{lt.5} can be written as
 \begin{equation}\label{lt.5.1}
 Ric(\omega(t))=\frac{1}{t}\omega_0+(1-\frac{1}{t})\omega(t),
 \end{equation}
 which is a simple reparametrization of the classical continuity method proposed by Aubin \cite{Au,Au84}. Therefore, all the results concerning Aubin's continuity method on Fano manifolds apply in our case. For example, by the results in \cite{BM}, if $X$ admits a unique K\"ahler-Einstein metric $\omega_{KE}\in2\pi c_1(X)$, then \eqref{lt.5} (or \eqref{lt.5.1}) is solvable for $t\in[1,\infty)$ and $\omega(t)$ converges in $C^\infty(X,\omega_0)$-topology to $\omega_{KE}$ as $t\to\infty$.\\

\item[(2.2)] $k=n$ (finite-time volume non-collapsing). In this case, according to results in \cite{LT,LTZ}, there exist a proper subvariety $V\subset Y$ and a K\"ahler metric $\omega_Y$ on $Y\setminus V$ such that $\omega(t)\to f^*\omega_Y$ in $C^\infty_{loc}(f^{-1}(Y\setminus V))$-topology and $(X,\omega(t))$ converges in Gromov-Hausdorff topology to the metric completion of $(Y\setminus V,\omega_Y)$, which is a compact length metric space homeomorphic to $Y$.\\

\item[(2.3)] $1\le k\le n-1$ (finite-time volume collapsing). In this case, $f:X\to Y$ is a Fano fibration and it is conjectured (see \cite[Conjecture 4.1]{LT}) that \emph{$(X,\omega(t))$ should converge in Gromov-Hausdorff topology to a compact metric $d_Y$ on $Y$; moreover $d_Y$ is a ``nice" K\"ahler metric $\omega_Y$ on $Y\setminus S$ for some proper subvariety $S$ of $Y$ and $\omega(t)$ should converge to $f^*\omega_Y$ on $f^{-1}(Y\setminus S)$ in some more regular topology, possibly, the smooth topology. }
\end{itemize}

\end{itemize}
\par In this paper, we will focus on the above finite-time volume collapsing case (2.3) and partially confirm conjecture in this case. More precisely, we will prove the following main results.

\begin{thm}\label{t1}
Assume $X$ is an $n$-dimensional projective manifold with an ample $\mathbb Q$-line bundle $L$ and $T:=\sup\{t>0|L+tK_X$ is ample$\}=1$. Let $f:X\to Y\subset \mathbb{CP}^N$ be the map given in \eqref{fibration} with $1\le dim(Y)\le n-1$. Define a proper subvariety $S\subset Y$ be the singular set of $Y$ together with the critical values of $f$. For any K\"ahler metric $\omega_0\in2\pi c_1(L)$, let $\omega(t)_{t\in[0,1)}$ be the unique smooth solution of \eqref{lt.0} starting from $\omega_0$ on $X$. Then there exists a positive $(1,1)$-current $\omega_Y$ on $Y$, which is a K\"ahler metric on $Y\setminus S$, such that as $t\to1$, $\omega(t)\to f^*\omega_Y$ as currents on $X$. Moreover, for any $K\subset\subset Y\setminus S$, $\omega(t)\to f^*\omega_Y$ in $C^0(K,\omega_0)$-topology.
\end{thm}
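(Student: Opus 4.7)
The plan is to convert \eqref{lt.0} into a family of complex Monge-Amp\`ere equations parametrized by $t\in[0,1)$, establish uniform estimates on the potential up to $t=1$, and identify the limiting current.  Choose a smooth semi-positive form $\chi=f^*\eta\in 2\pi c_1(L+K_X)$, where $\eta$ is the restriction to $Y\subset\mathbb{CP}^N$ of a Fubini-Study form, and a smooth positive volume form $\Omega$ on $X$ with $\sqrt{-1}\partial\bar\partial\log\Omega=\chi-\omega_0$.  Setting $\omega_t:=(1-t)\omega_0+t\chi$ (which is K\"ahler for $t<1$) and writing $\omega(t)=\omega_t+\sqrt{-1}\partial\bar\partial\varphi(t)$, the standard reduction turns \eqref{lt.0} into the scalar Monge-Amp\`ere equation
\begin{equation*}
\bigl(\omega_t+\sqrt{-1}\partial\bar\partial\varphi(t)\bigr)^n=e^{\varphi(t)/t}\Omega,\qquad \omega_t+\sqrt{-1}\partial\bar\partial\varphi(t)>0,\qquad \varphi(0)\equiv 0.
\end{equation*}

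I would first establish the uniform bound $\|\varphi(t)\|_{L^\infty(X)}\le C$ on $t\in[0,1)$.  Since $[L+K_X]$ has Iitaka dimension $k=\dim Y\ge 1$, the reference volume satisfies $\int_X\omega_t^n\ge c(1-t)^{n-k}$ with $c>0$, so after normalizing by this factor we are in the setting of the Kolodziej / Eyssidieux--Guedj--Zeriahi $L^\infty$ estimate for degenerate Monge-Amp\`ere equations.  A complementary maximum-principle bound on $\dot\varphi(t)$, obtained by differentiating the equation in $t$, yields $\omega(t)^n\le C\Omega$.  By compactness of bounded $\chi$-plurisubharmonic functions we can extract a subsequence $\varphi(t_j)\to\varphi_\infty$ in $L^1(X)$ with $\varphi_\infty\in L^\infty(X)\cap\mathrm{PSH}(X,\chi)$; to identify the limit current $\chi+\sqrt{-1}\partial\bar\partial\varphi_\infty$ as $f^*\omega_Y$ for some positive current $\omega_Y$ on $Y$, one must show that $\varphi_\infty$ is constant along the smooth fibers of $f$.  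This I would do via a Yau--Royden--Mok Schwarz-lemma calculation applied to $f\colon(X,\omega(t))\to(Y,\eta)$, yielding $f^*\eta\le C\omega(t)$ uniformly on $f^{-1}(K)$ for any $K\subset\subset Y\setminus S$, combined with the observation that on each Fano fiber $X_y$ the class $[\omega(t)|_{X_y}]=2\pi(1-t)c_1(-K_{X_y})$ collapses linearly in $(1-t)$.  Setting $\phi_\infty(y):=\varphi_\infty|_{X_y}$ on $Y\setminus S$ and $\omega_Y:=\eta+\sqrt{-1}\partial\bar\partial\phi_\infty$ then gives the desired weak convergence on $X$, and uniqueness of the subsequential limit follows from the uniform volume-form identity together with the Fano-fiber normalization.

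To upgrade to $C^0(K,\omega_0)$ convergence above $K\subset\subset Y\setminus S$, I would sharpen the Schwarz-lemma sandwich to a two-sided bound $C^{-1}f^*\eta\le\omega(t)\le C\bigl(f^*\eta+(1-t)\omega_0\bigr)$ on $f^{-1}(K)$, and combine it with the equation $\omega(t)^n=e^{\varphi/t}\Omega$ to control simultaneously the horizontal and vertical blocks of $\omega(t)$ in local product-like coordinates adapted to $f$.  The main technical obstacle is precisely the upper estimate $\omega(t)\le C(f^*\eta+(1-t)\omega_0)$: this is a ``collapsing'' second-order bound asserting that fibers shrink at the correct rate while the horizontal part of $\omega(t)$ stays comparable to the pullback from the base.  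Such estimates, developed by Song--Tian, Tosatti--Weinkove--Yang, Fong--Zhang and others for the K\"ahler--Ricci flow on Fano fibrations, require a judicious choice of auxiliary quantity (typically the trace $\mathrm{tr}_{\omega(t)}(f^*\eta)$ combined with $\varphi$ and a fiber Laplacian term) and a maximum-principle argument adapted to the degeneration $t\to 1$; carrying out the analogous computation for the La Nave--Tian continuity method, in a form strong enough to eliminate the subsequence in the $C^0$ statement, is where I expect the core difficulty of the proof to lie.
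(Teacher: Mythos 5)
Your framework is the right one, and you correctly identify the key references and the core technical obstacle, but two central ingredients of the paper's actual proof are missing, and without them the proof does not close.

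First, you never \emph{construct} the limiting metric $\omega_Y$: you define it a posteriori as $\eta+\sqrt{-1}\partial\bar\partial\phi_\infty$ with $\phi_\infty:=\varphi_\infty|_{X_y}$ and then assert that ``uniqueness of the subsequential limit follows from the uniform volume-form identity together with the Fano-fiber normalization.'' This is where the real work is. For $\phi_\infty$ to even be well-defined you need a fiberwise oscillation estimate of the form $\sup_{X_y}|\varphi(t)-\bar\varphi_y(t)|\to 0$ (uniformly on compacts of $Y\setminus S$), which your Schwarz-lemma bound $f^*\eta\le C\omega(t)$ does not by itself deliver -- it controls a trace, not the fiberwise oscillation of the potential. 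The paper proves this via a separate fiberwise Yau $C^0$-estimate (including a delicate Green's function argument when $n-k=1$). More importantly, to remove the passage to subsequences and to pin down $\omega_Y$, the paper constructs $\omega_Y$ \emph{independently of the flow} as the unique bounded solution of a Monge--Amp\`ere equation on $Y$, $(\chi+\sqrt{-1}\partial\bar\partial\psi)^k=e^\psi G\,\chi^k$, where $G=\Omega/\bigl(\tbinom{n}{k}\,\overline\omega_0^{\,n-k}\wedge(f^*\chi)^k\bigr)$ descends to $Y$. The identification of the subsequential $L^1$-limit with this canonical solution (via a resolution of singularities and a pluripotential no-mass argument on $S$) is what gives you convergence of the full family.

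Second -- and this is what you yourself flag as ``the core difficulty'' but do not resolve -- the TWY strategy in this Fano setting requires a canonical fiberwise reference metric playing the role that the Ricci-flat metric plays in the Calabi--Yau fibration case. That metric is not guessed; it is the fiberwise Calabi metric $\overline\omega_{0,y}=\omega_{0,y}+\sqrt{-1}\partial_f\bar\partial_f u_y$ normalized by the condition $\mathrm{Ric}(\overline\omega_{0,y})=\omega_{0,y}$. The $C^0$ statement of the theorem is then proved by first showing $e^t\omega(t)|_{X_y}\to\overline\omega_{0,y}$ uniformly (via a Calabi-type $C^1$ estimate for the rescaled metric, plus the linear-algebra lemma of TWY applied to the fiberwise trace and determinant) and only then assembling the horizontal and vertical blocks against $\hat\omega(t)=e^{-t}\overline\omega_0+(1-e^{-t})f^*\omega_Y$. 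Your ``two-sided Schwarz-lemma sandwich'' $C^{-1}f^*\eta\le\omega(t)\le C(f^*\eta+(1-t)\omega_0)$ is in fact established in the paper (it is the content of the trace bounds in Section~3), but it only yields uniform equivalence, not the convergence of the theorem; the extra input needed to turn equivalence into convergence is precisely the decay rates $|\varphi(t)-f^*\psi|\to 0$, $|\dot\varphi(t)|\to 0$, and $\mathrm{tr}_{\omega(t)}f^*\omega_Y\to k$, together with the fiberwise identification of the limit as $\overline\omega_{0,y}$. Your proposal names the right toolbox but does not supply these ingredients, and they are not routine.
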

The current $\omega_Y$ is canonically constructed in terms of  the fibration structure and the initial metric $\omega_0$, see Section \ref{limiting} for more discussions.

\par A direct consequence of Theorem \ref{t1} is the following

\begin{cor}\label{t2}
Assume the same as in Theorem \ref{t1} and, additionally, $S=\emptyset$, i.e. $Y$ is smooth and $f:X\to Y$ is a holomorphic submersion. Then for any K\"ahler metric $\omega_0\in2\pi c_1(L)$, there exists a K\"ahler metric $\omega_Y$ on $Y$ such that the unique solution $\omega(t)_{t\in[0,1)}$ of \eqref{lt.0} starting from $\omega_0$ on $X$ converges to $f^*\omega_Y$ in $C^0(X,\omega_0)$-topology as $t\to1$. In particular, $(X,\omega(t))\to(Y,\omega_Y)$ in Gromov-Hausdorff topology as $t\to1$.
\end{cor}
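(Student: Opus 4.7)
The plan is to deduce both conclusions from Theorem~\ref{t1}. Since $S = \emptyset$, $Y \setminus S = Y$ is compact and $X = f^{-1}(Y)$, so taking the compact set $K = Y$ in the last clause of Theorem~\ref{t1} gives $\omega(t) \to f^* \omega_Y$ in $C^0(X,\omega_0)$-topology as $t \to 1$, establishing the first assertion. Set $\delta_t := \|\omega(t) - f^* \omega_Y\|_{C^0(X,\omega_0)} \to 0$. Two quantitative consequences drive the Gromov--Hausdorff argument: on vertical vectors $v \in \ker df$, $|v|_{\omega(t)}^2 \le \delta_t |v|_{\omega_0}^2$ (since $f^* \omega_Y(v,Jv) = 0$); and on $\omega_0$-horizontal vectors $v$, the submersion hypothesis combined with compactness of $X$ yields the uniform comparability $c|v|_{\omega_0} \le |df(v)|_{\omega_Y} \le C|v|_{\omega_0}$, hence $|v|_{\omega(t)}^2 = |df(v)|_{\omega_Y}^2 (1 + O(\delta_t))$.

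For the Gromov--Hausdorff statement I would use $f \colon X \to Y$ itself as an $\varepsilon_t$-approximation with $\varepsilon_t \to 0$. The first estimate above, combined with the uniform $\omega_0$-diameter bound on fibers (finite by compactness of $X$ and the submersion property), yields uniform fiber collapse $\sup_{y \in Y} \mathrm{diam}_{\omega(t)}(F_y) \le C\sqrt{\delta_t} \to 0$. The second yields the upper distance bound: for any $x_1, x_2 \in X$, concatenate the $\omega_0$-horizontal lift of an $\omega_Y$-geodesic from $f(x_1)$ to $f(x_2)$ with a curve in the fiber $F_{f(x_2)}$; the total $\omega(t)$-length is bounded by $d_{\omega_Y}(f(x_1),f(x_2))(1 + O(\sqrt{\delta_t})) + o(1)$, so $d_{\omega(t)}(x_1,x_2) \le d_{\omega_Y}(f(x_1),f(x_2)) + o(1)$.

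The main obstacle is the matching lower bound $d_{\omega_Y}(f(x_1),f(x_2)) \le d_{\omega(t)}(x_1,x_2) + o(1)$. The difficulty is that the $C^0$ control in $\omega_0$-norm does not yield a pointwise Lipschitz estimate $|df(v)|_{\omega_Y} \le (1 + o(1))|v|_{\omega(t)}$, since a $\omega(t)$-unit vector can have arbitrarily large $\omega_0$-norm along collapsing vertical directions, so the error term $\delta_t |v|_{\omega_0}^2$ need not be small relative to $|v|_{\omega(t)}^2$. To address this I would decompose an arbitrary curve $\gamma$ from $x_1$ to $x_2$ using the $\omega(t)$-orthogonal splitting $TX = H_t \oplus \ker df$, where $H_t := (\ker df)^{\perp_{\omega(t)}}$. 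Writing $\gamma' = \gamma'_H + \gamma'_V$, the $\omega(t)$-orthogonality gives $|\gamma'|_{\omega(t)}^2 = |\gamma'_H|_{\omega(t)}^2 + |\gamma'_V|_{\omega(t)}^2$, and $df(\gamma') = df(\gamma'_H)$. An algebraic computation exploiting the $H_t$-condition $g_{\omega(t)}(\gamma'_H, \cdot)|_{\ker df} = 0$ to control the $\omega_0$-vertical shift of vectors in $H_t$ relative to their $\omega_0$-horizontal part then reduces the needed bound $|df(\gamma'_H)|_{\omega_Y} \le (1+o(1))|\gamma'_H|_{\omega(t)}$ to the comparability on $\omega_0$-horizontal vectors already established. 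Integrating yields $L_{\omega_Y}(f \circ \gamma) \le (1+o(1)) L_{\omega(t)}(\gamma)$, and applied to a $\omega(t)$-minimizer this delivers the lower bound. Combined with the upper bound and the uniform fiber collapse, $f$ is an $\varepsilon_t$-Gromov--Hausdorff approximation with $\varepsilon_t \to 0$, completing the proof.
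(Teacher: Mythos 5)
The reduction of the $C^0$ assertion to Theorem~\ref{t1} by taking $K=Y$ is correct, the upper bound $d_{\omega(t)}(x_1,x_2)\le d_{\omega_Y}(f(x_1),f(x_2))+o(1)$ via horizontal lifts plus fiber collapse is sound, and you have correctly located the delicate point in the lower bound. However, the fix you propose for the lower bound has a genuine gap. You pass to the $\omega(t)$-orthogonal splitting $TX=H_t\oplus\ker df$ and claim that the orthogonality relation $h_t(\gamma'_H,\cdot)|_{\ker df}=0$ controls the $\omega_0$-vertical part of an $H_t$-vector well enough to transfer the comparability from $\omega_0$-horizontal vectors. It does not: writing a vector $v\in H_t$ in adapted coordinates as $v=v_w+v_y$ with $v_w\in\ker df$, the orthogonality forces $v_w=-g_{\mathrm{vert}}^{-1}\,g_{\mathrm{cross}}\,v_y$, where $g_{\mathrm{vert}}\sim e^{-t}$ (by Lemma~\ref{C2lem.4}) and the mixed block $g_{\mathrm{cross}}$ is only known to be $O(\delta_t)$ and $O(e^{-t/2})$ from the $C^0(\omega_0)$-estimate plus Cauchy--Schwarz. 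Hence the amplification factor $g_{\mathrm{vert}}^{-1}g_{\mathrm{cross}}$ is $O\bigl(\min(e^t\delta_t,\,e^{t/2})\bigr)$, which need not be bounded: $H_t$ can drift substantially toward $\ker df$, and $|v|_{\omega_0}$ for $v\in H_t$ can blow up relative to $|v_y|_{\omega_0}$. Consequently the error term $\delta_t|v|_{\omega_0}^2$ against $|v|_{\omega(t)}^2$ is only controlled by a quantity of the form $e^t\delta_t^2$, and without a rate on $\delta_t$ (and the paper's $G_{10}(t)$ carries no rate) this does not tend to zero. The same obstruction defeats the more naive use of $\omega(t)\ge C^{-1}e^{-t}\omega_0$ followed by integration along an $\omega(t)$-geodesic.

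What actually closes the argument is a strictly stronger intermediate estimate that the paper establishes inside the proof of Theorem~\ref{local4} but then weakens before stating Theorem~\ref{t1}: with $\hat\omega(t)=e^{-t}\overline\omega_0+(1-e^{-t})f^*\omega_Y$, one has $\|\omega(t)-\hat\omega(t)\|_{C^0(f^{-1}(K),\,\omega(t))}\le G_{12}(t)\to0$, i.e.\ $(1-G_{12}(t))\,\hat\omega(t)\le\omega(t)\le(1+G_{12}(t))\,\hat\omega(t)$ as Hermitian forms. Since $\hat\omega(t)\ge(1-e^{-t})f^*\omega_Y$, this gives the pointwise inequality $f^*\omega_Y\le(1+o(1))\,\omega(t)$, hence $|df(v)|_{\omega_Y}\le(1+o(1))|v|_{\omega(t)}$ for \emph{all} tangent vectors, with no reference to $\omega_0$ and no dependence on any rate for $\delta_t$. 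Integrating along an arbitrary curve then yields $L_{\omega_Y}(f\circ\gamma)\le(1+o(1))L_{\omega(t)}(\gamma)$, which is exactly the missing lower bound. So the decisive input for the Gromov--Hausdorff statement is the metric-equivalence version of the convergence, not the $C^0(\omega_0)$ version stated in Theorem~\ref{t1}; your appeal only to the latter is what leaves the gap.
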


In fact, Corollary \ref{t2} can be slightly generalized as follows.
\begin{cor}\label{r1}
Let $f:X\to Y$ be a holomorphic submersion between two compact K\"ahler manifolds with $1\le dim(Y)<dim(X)$ and assume there exist a K\"ahler metric $\chi$ on $Y$ such that $f^*[\chi]+2\pi c_1(X)$ is a K\"ahler class on $X$. Then for any K\"ahler metric $\omega_0\in f^*[\chi]+2\pi c_1(X)$, there exists a K\"ahler metric $\omega_Y\in[\chi]$ on $Y$ such that $\omega(t)_{t\in[0,1)}$, the unique solution of \eqref{lt.0} starting from $\omega_0$ on $X$, converges to $f^*\omega_Y$ in $C^0(X,\omega_0)$-topology as $t\to1$. In particular, $(X,\omega(t))\to(Y,\omega_Y)$ in Gromov-Hausdorff topology as $t\to1$.
\end{cor}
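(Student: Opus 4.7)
The plan is to view Corollary \ref{r1} as the transcendental (K\"ahler, not necessarily projective) analogue of Corollary \ref{t2}, and to extend the proof of Theorem \ref{t1} directly; no projectivity of $X$ or rationality of the K\"ahler classes is actually used by that proof beyond producing the fibration $f:X\to Y$, which here is given to us as a submersion.

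First I would set up the potential-level Monge--Amp\`ere formulation. Since $[\omega_0] - f^*[\chi] = 2\pi c_1(X)$, the $\partial\bar\partial$-lemma supplies a smooth volume form $\Omega$ on $X$ with $\mathrm{Ric}(\Omega) = \omega_0 - f^*\chi$. With the linear reference path $\hat\omega_t := (1-t)\omega_0 + t f^*\chi$, which is K\"ahler for $t \in [0,1)$ and lies in the class $[\omega(t)] = [\omega_0] - 2\pi t c_1(X)$, equation (\ref{lt.0}) reduces (after a harmless normalization of $\Omega$) to the Monge--Amp\`ere family
\[
(\hat\omega_t + \sqrt{-1}\partial\bar\partial\varphi_t)^n = e^{\varphi_t/t}\,\Omega,\qquad \omega(t)=\hat\omega_t+\sqrt{-1}\partial\bar\partial\varphi_t.
\]
This shows $T\ge 1$, and since $\hat\omega_1 = f^*\chi$ has vanishing restriction to the fibers of $f$, in fact $T=1$ with volume collapse. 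I would also note that each fiber $F$ is Fano: $f$ being a submersion forces $N_{F/X} = f^*TY|_F$ to be topologically trivial on $F$, so $c_1(X)|_F = c_1(F)$, and $[\omega_0]|_F = 2\pi c_1(F)$ is K\"ahler by hypothesis.

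Next I would port the a priori estimates of Theorem \ref{t1}: uniform two-sided $C^0$ bounds on $\varphi_t$, uniform bounds on the collapsed volume ratio, fiberwise collapsing $\omega(t)|_F\to 0$, and $C^0_{\rm loc}$ convergence of $\omega(t)$ to $f^*\omega_Y$ on the smooth locus of the fibration. Because $f$ is a global submersion, the subvariety $S\subset Y$ from Theorem \ref{t1} is empty, so $C^0_{\rm loc}$ upgrades to $C^0(X,\omega_0)$-convergence, and the limiting K\"ahler metric $\omega_Y$ descends to a smooth K\"ahler form on $Y$ in the class $[\chi]$. Gromov--Hausdorff convergence $(X,\omega(t))\to(Y,\omega_Y)$ is then a standard consequence of $C^0$-convergence together with fiberwise shrinking and a uniform diameter bound.

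The main obstacle is to verify that every estimate in the proof of Theorem \ref{t1} relies only on K\"ahler and fibration data, not on algebraic input such as ampleness of $L$ or the base-point-free structure associated to $L+K_X$. The standard ingredients --- Yau-type $C^0$ estimates, semi-Ricci-flat reference forms \`a la Song--Tian for the fiberwise Fano problem, parabolic Schwarz-type inequalities, and maximum principle comparisons --- all work in the transcendental setting for a smooth submersion onto a K\"ahler base, so one expects the extension to be essentially cosmetic. A pure approximation approach, perturbing $\chi$ to a rational K\"ahler class on $Y$ and invoking Corollary \ref{t2} directly, is less robust here because $X$ and $Y$ need not be projective and the K\"ahler cone may contain no rational classes; hence the direct extension of the transcendental PDE argument is the route I would pursue.
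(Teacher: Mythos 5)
Your proposal matches the paper's own treatment: the paper proves Corollary~\ref{r1} precisely by observing that the proof of Theorem~\ref{t1}/Corollary~\ref{t2} is carried out entirely at the level of the Monge--Amp\`ere equation and fibration structure, with rationality of $[\omega_0]$ entering only to \emph{produce} the map $f$ (here given) and rationality of $[\chi]$ entering only for regularity of $\omega_Y$ on $Y\setminus S$ (here moot since $S=\emptyset$ and $Y$ is a smooth compact K\"ahler manifold, so Yau's theorem applies directly). Your additional check that each fiber is Fano via triviality of $N_{F/X}$ and the resulting identity $c_1(X)|_F=c_1(F)$ is a correct and worthwhile supplement, but the overall route is the same step-by-step transcription of the Theorem~\ref{t1} argument.
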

If we check step by step, then it is clear that Corollary \ref{r1} can be proved by the same arguments for Corollary \ref{t2} (or Theorem \ref{t1}). We remark that, in the setting of Theorem \ref{t1}, the rationality of $[\omega_0]$ is used to provide the Fano fibration \eqref{fibration} and the rationality of $[\chi]$ (see \eqref{a1} in Section 2) is used to obtain higher order regularity of $\omega_Y$ on $Y\setminus S$ (where one needs Kodaira lemma, which holds for nef and big $\mathbb Q$-line bundle, see \cite{ST12} for details). Therefore, if we are given the setting in Corollary \ref{r1}, then we don't need to assume $[\chi]$ nor $[\omega_0]$ to be rational.\\

\begin{rem}
Very recently, Fu, Guo and Song \cite{FGS} made a big progress on studying the geometry of the continuity method. They proved that the diameter of $\omega(t)$ is uniformly bounded. Combining with our Theorem \ref{t1}, it seems very likely that the metrics $(X,\omega(t))$ converge to the limiting metric constructed in section 2 below. We shall return to this problem later.
\end{rem}

Let's look at an example.

\begin{exmp}\label{exmp2}
Let $\Sigma_a$, $a\in\mathbb{Z}_{\ge1}$, be a complete smooth fan in $\mathbb{R}^2$ with the minimal generators $u_0=(-1,a)$, $u_1=(0,1)$, $u_2=(1,0)$ and $u_3=(0,-1)$. Let $\mathcal{H}_a$ be the smooth toric variety corresponding to $\Sigma_a$, which is called the $a$-th Hirzebruch surface. We shall explain how to equip a $\mathbb{CP}^1$-bundle structure on $\mathcal{H}_a$ by extremal contractions. To this end, firstly note that the wall $\tau=\mathbb{R}_{\ge0}\cdot u_2$ is an extremal wall and will give an extremal ray $\mathcal{R}$ in the Mori cone of $\mathcal{H}_a$ (see \cite[Example 6.3.23]{CLS}). Moreover, since the wall relation of $\tau$ is
\begin{equation}
u_1+0\cdot u_2+u_3=0\nonumber,
\end{equation}
by \cite[Proposition 15.4.5]{CLS} the extremal contraction given by $\mathcal{R}$ is a fibration (i.e., Mori fiber space) $f:\mathcal{H}_a\to\mathbb{CP}^1$ and the fibers are isomorphic to the toric variety of the complete fan in $\mathbb{R}$, that is $\mathbb{CP}^1$. Hence all fibers are smooth and $f$ is in fact a locally trivial fibration (see \cite[p.190]{BHPV}). Consequently, $f$ is a $\mathbb{CP}^1$-bundle. Now the discussions in \cite[Section V.4, esp. Proposition V.4.2]{BHPV} imply that $f$ is exactly the projective bundle $\mathbb{P}(\mathcal{O}_{\mathbb{CP}^1}\oplus\mathcal{O}_{\mathbb{CP}^1}(a))\to\mathbb{CP}^1$, which is the desired conclusion.
\par Next,  by \cite[Lemma 2.2]{ToZy16} we can find a suitable constant $t_0$ such that $2\pi t_0c_1(\mathcal{H}_a)+f^*(2\pi c_1(\mathbb{CP}^1))$ is a K\"ahler class on $\mathcal{H}_a$. Then by Corollary \ref{t2} or Remark \ref{r1} the continuity method \eqref{lt.0} starting from any K\"ahler metric $\omega_0\in2\pi t_0c_1(\mathcal{H}_a)+f^*(2\pi c_1(\mathbb{CP}^1))$ converges to a K\"ahler metric $\bar\omega\in2\pi c_1(\mathbb{CP}^1)$ on $\mathbb{CP}^1$ in Gromov-Hausdorff topology. By case (2.1) the continuity method restarting from $\bar\omega$ on $\mathbb{CP}^1$ will converge to a point in Gromov-Hausdorff topology. Moreover, if we deform $\bar\omega$ by the normalized version \eqref{lt.5}, then it will converge in smooth topology on $\mathbb{CP}^1$ to, up to a biholomorphism, $2\omega_{FS}$ as $t\to\infty$, where $\omega_{FS}$ is the Fubini-Study metric on $\mathbb{CP}^1$ with $Ric(\omega_{FS})=2\omega_{FS}$.
\end{exmp}

\begin{rem}
The finite time collapsing of the K\"ahler-Ricci flow has been studied in many papers, see \cite{F1,F2,FuZs,SeT,So,SSW,SW,SY,TZ,ToZy16} and the references therein. For example, the collapsing of K\"ahler-Ricci flow on Hirzebruch surfaces is studied in \cite{SW} (also see \cite{F1} for certain generalizations), where the similar picture as in Example \ref{exmp2} is obtained by assuming certain symmetry condition on initial metrics. From the view point of Analytic Minimal Model Program with Ricci flow (in particular, see \cite[Conjecture 6.6]{ST16}), the result similar to Theorem \ref{t1} should be true for K\"ahler-Ricci flow.
\end{rem}

The rest of this paper is organized as follows. We will construct the limiting metric $\omega_Y$ on $Y$ in Section \ref{limiting}. Then we prove a weak convergence in Section \ref{est section} and uniform convergence of metric away from singular fibers in Section \ref{pf}.

\section{Construction of limiting metrics}\label{limiting}

Assume the same as in Theorem \ref{t1}. Then the limiting class satisfies
\begin{equation}\label{a1}
[\omega_0]-2\pi c_1(X)=f^{*}[\chi],
\end{equation}
where $\chi=\frac{1}{m}\omega_{FS}$ for some positive integer $m$. Here $\omega_{FS}\in2\pi c_1(\mathcal O_{\mathbb{CP}^N}(1))$ is the Funibi-Study metric on $\mathbb{CP}^N$.
\par \eqref{a1} in particular implies, for $y\in Y\setminus S$,
\begin{equation}\label{a2}
\omega_{0}|_{X_y}\in2\pi c_1(X_y).
\end{equation}

By \eqref{a1} we fix a smooth positive volume form $\Omega$ on $X$ with
\begin{equation}\label{vol}
\sqrt{-1}\partial\bar{\partial}\log\Omega=(f^*\chi-\omega_0).
\end{equation}

\par For $y\in Y\setminus S$, we denote $\omega_{0,y}:=\omega_0|_{X_y}$. Then by using \eqref{a2} and the $\partial\bar{\partial}$-lemma one can choose a $\rho_{y}\in C^\infty(X_y,\mathbb{R})$ with
\begin{equation}\label{}
\left\{
\begin{aligned}
Ric(\omega_{0,y})-\omega_{0,y}&=\sqrt{-1}\partial_f\bar{\partial}_f\rho_y\\
\int_{X_y}e^{\rho_{y}}(\omega_{0,y})^{n-k}&=\int_{X_y}(\omega_{0,y})^{n-k}.
\end{aligned}
\right.
\end{equation}
Here $\partial_f$ is the restriction of $\partial$ to the smooth fiber.
Define a function $\rho$ on $X_{reg}:=f^{-1}(Y\setminus S)$ by setting $\rho(y,\cdot):=\rho_y(\cdot)$. Then $\rho\in C^\infty(X_{reg},\mathbb{R})$. Moreover, by Yau \cite{Y}, we have a unique $u_y\in C^\infty(X_y,\mathbb R)$ satisfying
\begin{equation}\label{}
\left\{
\begin{aligned}
(\omega_{0,y}+\sqrt{-1}\partial_f\bar\partial_f u_y)^{n-k}&=e^{\rho_y}(\omega_{0,y})^{n-k}\\
\int_{X_y}u_{y}(\omega_{0,y})^{n-k}&=0.
\end{aligned}
\right.
\end{equation}

Define a function $u$ on $X_{reg}:=f^{-1}(Y\setminus S)$ by setting $u(y,\cdot):=u_y(\cdot)$. Then $u\in C^\infty(X_{reg},\mathbb{R})$ and $\overline\omega_0:=\omega_0+\sqrt{-1}\partial\bar\partial u$ is a closed real $(1,1)$-form on $X_{reg}$. Denote $\overline\omega_{0,y}:=\overline\omega_{0}|_{X_y}$, then $\bar\omega_{0,y}$ is a K\"ahler metric on smooth fiber $X_y$ with
\begin{equation}\label{calabi.eq}
Ric(\overline\omega_{0,y})=\omega_{0,y}.
\end{equation}

Define a function
\begin{equation}
G=\frac{\Omega}{\binom{n}{k}\overline\omega_0^{n-k}\wedge(f^*\chi)^k}\nonumber.
\end{equation}
Then $G$ can be seen as a smooth positive function on $Y\setminus S$. In fact, by direct computation:
\begin{align}
\sqrt{-1}\partial_f\bar{\partial}_f\log G&=\sqrt{-1}\partial_f\bar{\partial}_f\log\Omega+Ric(\overline\omega_{0,y})\nonumber\\
&=-\omega_{0,y}+Ric(\overline\omega_0|_{X_y})\nonumber\\
&=0\nonumber
\end{align}
by the definition of $\rho$. Hence $G$ is a constant along each smooth fiber and descends to a smooth positive function on $Y\setminus S$. In fact, as in \cite[Lemma 3.3]{ST12}, on $Y\setminus S$ we have
\begin{equation}\label{equation}
G=\frac{f_*\Omega}{\binom{n}{k}V_0\chi^k},
\end{equation}
where $V_0:=\int_{X_y}(\omega_{0,y})^{n-k}$ is a positive constant.
\par Moreover, by \cite[Proposition 3.2]{ST12} and its argument, we can find two positive constants $\delta$ and $\epsilon$ such that
\begin{equation}\label{Lp}
0<\delta\le G\in L^{1+\epsilon}(Y,\chi^k).
\end{equation}
\par Consider the following complex Monge-Amp\`{e}re equation on $Y$
\begin{equation}\label{limit}
(\chi+\sqrt{-1}\partial\bar{\partial}\psi)^k=e^{\psi}G\chi^k,
\end{equation}
where we have used the same notation $\chi$ to denote the restriction of $\chi$ on $Y$.
\par Having \eqref{Lp}, by \cite[Theorem 4.1]{EGZ} and \cite[Theorem 3.2]{ST12} (building on \cite{Y,K98}) there exists a unique solution $\psi\in PSH(Y,\chi)\cap L^\infty(Y)\cap C^\infty(Y\setminus S)$ to \eqref{limit}. We denote $\omega_Y:=\chi+\sqrt{-1}\partial\bar\partial\psi$, which is a positive $(1,1)$-current on $Y$ and a smooth K\"ahler metric on $Y\setminus S$. We will see that $\omega_Y$ is the limit of the continuity method \eqref{lt.0}.

\section{Estimates and weak convergence}\label{est section}
From now on, we will use the following reparametrization of \eqref{lt.0},
\begin{equation}\label{lt}
\left\{
\begin{aligned}
\omega(t)&=\omega_0-(1-e^{-t})Ric(\omega(t))\\
\omega(0)&=\omega_0.
\end{aligned}
\right.
\end{equation}
which will be more convenient for later discussions.
Obviously, \eqref{lt} has a unique solution $\omega(t)$ for $t\in[0,\infty)$ and the finite time collapsing of \eqref{lt.0} at $t=1$ is exactly the infinite time collapsing of \eqref{lt}. A useful fact is that for $t\in[1,\infty)$ we have the following uniform lower bound for Ricci curvature along the continuity method \eqref{lt}:
\begin{equation}\label{ric}
Ric(\omega(t))\ge-2\omega(t).
\end{equation}

\par First of all we reduce the continuity method (\ref{lt}) to a complex Monge-Amp\`{e}re equation as follows. Define
\begin{equation}
\omega_t=e^{-t}\omega_0+(1-e^{-t})f^*\chi\nonumber.
\end{equation}
Then $\omega(t)=\omega_t+\sqrt{-1}\partial\bar{\partial}\varphi(t)$ solves (\ref{lt}) if $\varphi(t)$ solves

\begin{equation}\label{lt1}
\left\{
\begin{aligned}
(\omega_t+\sqrt{-1}\partial\bar{\partial}\varphi(t))^{n}&=e^{-(n-k)t}e^{\frac{\varphi(t)}{1-e^{-t}}}\Omega\\
\varphi(0)&=0.
\end{aligned}
\right.
\end{equation}

\begin{lem}\label{C0lem}\cite{DP,EGZ08}
There exists a constant $C\ge1$ such that
$$\|\varphi(t)\|_{C^0(X\times[1,\infty))}\le C.$$
\end{lem}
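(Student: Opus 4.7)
The plan is to prove the $C^0$ bound by splitting it into separate upper and lower estimates. For each fixed $t\ge 1$ the Monge--Amp\`ere equation \eqref{lt1} is elliptic with $\omega_t$ Kähler, and the whole difficulty is to obtain constants that are independent of $t$ as the reference class collapses to $f^*[\chi]$.

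\medskip

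For the upper bound I would run the maximum principle at each fixed $t$. At a point $x_t\in X$ where $\varphi(\cdot,t)$ attains its maximum, $\sqrt{-1}\partial\bar\partial\varphi(x_t,t)\le 0$, so from \eqref{lt1}
\begin{equation*}
e^{-(n-k)t}e^{\varphi(x_t,t)/(1-e^{-t})}\,\Omega\big|_{x_t}\;\le\;\omega_t^{\,n}\big|_{x_t}.
\end{equation*}
Expanding $\omega_t^{\,n}=\sum_{j=0}^{n}\binom{n}{j}e^{-(n-j)t}(1-e^{-t})^{j}\omega_0^{\,n-j}\wedge(f^*\chi)^{j}$ and discarding the vanishing terms with $j>k$ (since $(f^*\chi)^{k+1}=0$), a uniform comparison $f^*\chi\le C\omega_0$ gives $\omega_t^{\,n}\le C e^{-(n-k)t}\Omega$ with $C$ independent of $t\ge0$. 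Hence $e^{\varphi(x_t,t)/(1-e^{-t})}\le C$, so $\varphi(\cdot,t)\le (1-e^{-t})\log C\le \log C$.

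\medskip

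The lower bound is the hard part, and I would establish it by applying the Kolodziej-type $L^\infty$ estimate of Demailly--Pali \cite{DP} and Eyssidieux--Guedj--Zeriahi \cite{EGZ08} to the degenerating family \eqref{lt1}. The two inputs the estimate needs are: a uniform $L^{1+\epsilon}$ bound on the right-hand side density with respect to a fixed volume form, and a uniform control of the reference class $[\omega_t]$. The first input is automatic once one absorbs the known upper bound on $\varphi$: the density $e^{-(n-k)t}e^{\varphi/(1-e^{-t})}\Omega/\omega_0^{\,n}$ is bounded, hence in every $L^p$. The second input uses that $[\omega_t]$ lies in the segment joining two fixed classes $[\omega_0]$ and $f^*[\chi]$, with $f^*[\chi]$ semipositive and big on $Y$; this is precisely the setup covered by \cite{EGZ08}. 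Alternatively, the lower bound can be recovered by comparison with the pullback of the limit potential $\psi$ constructed in Section~\ref{limiting}: taking $\Phi(t):=\varphi(t)-(1-e^{-t})f^*\psi$ and applying the minimum principle yields $\omega(t)\ge e^{-t}\omega_0+(1-e^{-t})f^*\omega_Y$ at the minimum point, and then \eqref{lt1} together with the identity $\binom{n}{k}\bar\omega_0^{\,n-k}\wedge(f^*\omega_Y)^k=e^{f^*\psi}\Omega$ on regular fibres controls $\varphi$ from below.

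\medskip

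The main obstacle in either route is the presence of the singular set $S\subset Y$: in the comparison argument the candidate barrier $f^*\psi$ is only known to be bounded and smooth off $f^{-1}(S)$, so one cannot apply the classical pointwise maximum principle at a minimum lying over $S$; in the pluripotential route the reference class $f^*[\chi]$ is only semipositive, not Kähler, so one has to invoke the degenerate version of the $L^\infty$ estimate rather than Kolodziej's original result. Both routes are handled by the machinery of \cite{DP, EGZ08}, either directly or via regularization of $\psi$ by standard decreasing sequences of smooth $\chi$-psh functions, followed by passing to the limit in the estimate. The paper may well choose the pluripotential route, which sidesteps $S$ altogether and only requires the a priori bound on the density already obtained from the upper estimate.
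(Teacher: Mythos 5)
Your main route (route (a)) is essentially the paper's: a pointwise maximum principle for the upper bound, then the degenerate $L^\infty$ estimate of Demailly--Pali/Eyssidieux--Guedj--Zeriahi (the paper cites \cite[Theorem 2.2]{DP}, also \cite{EGZ08}) for the remaining bound, fed by the observation that the density is uniformly in $L^{1+\epsilon}$ once the upper bound on $\varphi$ is known. Your alternative route (b) via $f^*\psi$ is not used; you correctly flag its difficulties.

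There is one genuine omission. What \cite{DP,EGZ08} produce here is an \emph{oscillation} bound $\sup_X\varphi(t)-\inf_X\varphi(t)\le C$, not an absolute lower bound on $\inf_X\varphi(t)$. To convert that into the claimed $C^0$ bound one must also show that $\sup_X\varphi(t)$ is bounded \emph{from below}, uniformly in $t$. This does not follow from the upper-bound maximum principle (at a minimum point the same argument gives $e^{\varphi/(1-e^{-t})}\ge\omega_t^n/(e^{-(n-k)t}\Omega)$, whose right-hand side can degenerate to $0$ over singular fibers, so it yields nothing). The paper supplies the missing step by integrating the equation:
\begin{equation*}
e^{2\sup_X\varphi(t)}\int_X\Omega \;\ge\; \int_X e^{\frac{\varphi(t)}{1-e^{-t}}}\Omega \;=\; e^{(n-k)t}\int_X\omega_t^n \;\ge\; C^{-1},
\end{equation*}
which forces $\sup_X\varphi(t)\ge-C$. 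Combined with the upper bound and the oscillation estimate this gives the two-sided $C^0$ bound. A smaller bookkeeping point: the density to feed into \cite{DP} should be normalized by the shrinking class volume $e^{(n-k)t}[\omega_t]^n$ (i.e.\ $e^{\varphi/(1-e^{-t})}/(e^{(n-k)t}[\omega_t]^n)$), rather than by $\omega_0^n$ as written; that is what makes the total mass independent of $t$ so that the stability estimate applies uniformly.
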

\begin{proof}
Firstly note that $\omega_t^n\le Ce^{-(n-k)t}\Omega$ on $X\times[1,\infty)$ for some constant $C\ge1$. Then by the maximum principle, we easily see that $sup_{X\times[1,\infty)}\varphi(t)$ is uniformly bounded from above. Moreover, for any $t\in[1,\infty)$,
$$e^{2\sup_X\varphi(t)}\int_{X}\Omega\ge\int_{X}e^{\frac{\varphi(t)}{1-e^{-t}}}\Omega=e^{(n-k)t}\omega_t^n\ge C^{-1}$$
for some uniform constant $C\ge1$. Hence
\begin{equation}\label{C01}
\left|\sup_{X\times[1,\infty)}\varphi(t)\right|\le C.
\end{equation}
On the other hand, for an arbitrary fixed positive constant $\epsilon$ and all $t\in[1,\infty)$, we have
$$\int_X\left(\frac{e^{\frac{\varphi(t)}{1-e^{-t}}}}{e^{(n-k)t}[\omega_t]^n}\right)^{1+\epsilon}\le C,$$
where $[\omega_t]^n:=\int_X\omega_t^n$ and we have used $e^{(n-k)t}[\omega_t]^n$ is uniformly bounded from below. By applying \cite[Theorem 2.2]{DP} (also see \cite{EGZ08}), we find a constant $C\ge1$ such that for all $t\in[1,\infty)$,
\begin{equation}\label{C02}
\sup_X\varphi(t)-\inf_X\varphi(t)\le C.
\end{equation}
Combining \eqref{C01} and \eqref{C02}, Lemma \ref{C0lem} is proved.
\end{proof}
An immediate consequence of Lemma \ref{C0lem} is
\begin{lem}\label{C0lem'}
There exists a constant $C\ge1$ such that
$$C^{-1}e^{-(n-k)t}\Omega\le\omega(t)^n\le Ce^{-(n-k)t}\Omega.$$
\end{lem}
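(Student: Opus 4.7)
The plan is to read off the result directly from the reduced Monge--Amp\`ere formulation \eqref{lt1}. Since $\omega(t)=\omega_t+\sqrt{-1}\partial\bar\partial\varphi(t)$ and $\varphi(t)$ solves \eqref{lt1}, we have the pointwise identity
$$
\omega(t)^n \;=\; e^{-(n-k)t}\,\exp\!\Bigl(\tfrac{\varphi(t)}{1-e^{-t}}\Bigr)\,\Omega
$$
on all of $X\times[0,\infty)$. The whole task, therefore, is to control the exponential factor uniformly in $t\in[1,\infty)$ by positive constants. Once this is achieved, comparing the right-hand side to $e^{-(n-k)t}\Omega$ yields both desired inequalities.

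The control of the exponential is immediate from Lemma \ref{C0lem}. For $t\in[1,\infty)$ we have $1-e^{-t}\ge 1-e^{-1}>0$, so the ratio $\varphi(t)/(1-e^{-t})$ is bounded in absolute value by $\|\varphi\|_{C^0(X\times[1,\infty))}/(1-e^{-1})$, which is finite by Lemma \ref{C0lem}. Exponentiating gives the two-sided bound on $\exp(\varphi(t)/(1-e^{-t}))$ by positive constants depending only on $C$. There is no genuine obstacle here; the only subtle point to keep in mind is that the factor $1-e^{-t}$ vanishes at $t=0$, which is precisely why the uniform $C^0$ bound was stated on $[1,\infty)$ rather than on $[0,\infty)$. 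For $t\in[0,1]$ the solution $\omega(t)$ is smooth up to $t=0$ on the compact manifold $X$, so $\omega(t)^n/\Omega$ is trivially bounded above and below by positive constants, and $e^{-(n-k)t}$ is bounded by positive constants on $[0,1]$ as well; enlarging $C$ to absorb these finishes the proof.
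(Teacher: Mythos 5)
Your argument is exactly the one the paper intends: Lemma~\ref{C0lem'} is stated as an ``immediate consequence'' of Lemma~\ref{C0lem}, obtained by reading the bound on $\varphi(t)$ back into the Monge--Amp\`ere equation \eqref{lt1}, which gives $\omega(t)^n = e^{-(n-k)t}e^{\varphi(t)/(1-e^{-t})}\Omega$. Your handling of the factor $1-e^{-t}$ and the separate remark for $t\in[0,1]$ are correct and consistent with the paper, which works throughout on $[1,\infty)$.
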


\begin{lem}\label{C2lem}
There exists a constant $C\ge1$ such that on $X\times[1,\infty)$,
$$tr_{\omega(t)}f^*\chi\le C$$
\end{lem}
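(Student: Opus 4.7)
\medskip

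\noindent\textbf{Proof proposal.} My plan is to apply a Chern--Lu / Schwarz-type maximum principle argument at each fixed $t \ge 1$, in the spirit of the standard parabolic Schwarz lemma for the K\"ahler--Ricci flow. The two inputs that drive this are: the Ricci lower bound $Ric(\omega(t)) \ge -2\omega(t)$ from \eqref{ric}; and the fact that $\chi = \omega_{FS}/m$ arises by pullback from $\mathbb{CP}^N$ and hence has uniformly bounded holomorphic bisectional curvature from above. Treating $f$ as a holomorphic map $f: (X,\omega(t)) \to (\mathbb{CP}^N, \chi)$, the standard Chern--Lu/Yau Schwarz calculation gives, on the open dense set $\{tr_{\omega(t)}f^*\chi>0\}$,
\[
\Delta_{\omega(t)} \log tr_{\omega(t)} f^*\chi \ge -C_0 - C_0\,tr_{\omega(t)} f^*\chi
\]
for some uniform constant $C_0 > 0$.

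Next I would introduce the auxiliary quantity $H := \log tr_{\omega(t)} f^*\chi - A\,\varphi(t)$ for a large constant $A$ to be chosen, and compute
\[
\Delta_{\omega(t)} \varphi(t) = n - tr_{\omega(t)} \omega_t = n - e^{-t}\,tr_{\omega(t)}\omega_0 - (1-e^{-t})\,tr_{\omega(t)} f^*\chi,
\]
which combined with the Chern--Lu inequality above yields
\[
\Delta_{\omega(t)} H \ge -C_0 - An + A e^{-t}\,tr_{\omega(t)}\omega_0 + \bigl(A(1-e^{-t}) - C_0\bigr)\,tr_{\omega(t)} f^*\chi.
\]
Choosing $A$ so large that $A(1-e^{-1}) \ge C_0 + 1$ ensures the last coefficient is at least $1$ for all $t \ge 1$; dropping the nonnegative $\omega_0$ term and applying the spatial maximum principle at an interior maximum $x_t$ of $H(\cdot,t)$ on the compact manifold $X$ then gives $tr_{\omega(t)} f^*\chi(x_t) \le C_0 + An$.

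To convert this pointwise bound at the maximum into a uniform bound everywhere, I would appeal to the $C^0$ estimate $\|\varphi\|_{C^0(X \times [1,\infty))} \le C$ from Lemma \ref{C0lem}: for all $x \in X$,
\[
\log tr_{\omega(t)} f^*\chi(x) = H(x,t) + A\varphi(x,t) \le H(x_t,t) + A\varphi(x,t) \le \log(C_0+An) + 2AC,
\]
yielding the desired estimate. The only real obstacle is mild and technical: one should justify the maximum principle despite $f^*\chi$ possibly vanishing at critical points of $f$, but since the conclusion $tr_{\omega(t)} f^*\chi \le C$ is trivial at zeros of $f^*\chi$ and the (finite) spatial maxima of $H$ lie in the open set $\{tr_{\omega(t)} f^*\chi > 0\}$, the argument goes through by a routine approximation (or by working with $\log(tr_{\omega(t)} f^*\chi + \epsilon)$ and letting $\epsilon \to 0$).
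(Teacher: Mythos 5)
Your proposal is correct and follows essentially the same route as the paper: the Chern--Lu/Yau Schwarz inequality $\Delta_{\omega(t)}\log tr_{\omega(t)}f^*\chi \ge -C\, tr_{\omega(t)}f^*\chi - C$, combined with the identity $\Delta_{\omega(t)}\varphi(t) = n - tr_{\omega(t)}\omega_t$ (the paper immediately simplifies this, for $t\ge1$, to $\Delta_{\omega(t)}\varphi(t)\le n - \tfrac12 tr_{\omega(t)}f^*\chi$), the auxiliary function $\log tr_{\omega(t)}f^*\chi - A\varphi(t)$ with $A$ large, the maximum principle, and Lemma~\ref{C0lem}. The only additional content in your write-up is the closing remark on justifying the maximum principle where $f^*\chi$ degenerates, which the paper leaves implicit; your suggested fix (working with $\log(tr_{\omega(t)}f^*\chi + \epsilon)$ or noting the maximum lies in the open set where the trace is positive) is the standard and correct resolution.
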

\begin{proof}
By a Schwarz lemma argument (see e.g. \cite{Y78}), we find a constant $C\ge1$ such that
\begin{equation}\label{C2.1}
\Delta_{\omega(t)}\log tr_{\omega(t)}f^*\chi\ge -C tr_{\omega(t)}f^*\chi-C.
\end{equation}
On the other hand, for $t\in[1,\infty)$,
\begin{align}\label{C2.2}
\Delta_{\omega(t)}\varphi(t)&=tr_{\omega(t)}(\omega(t)-e^{-t}\omega_0-(1-e^{-t})f^*\chi)\nonumber\\
&\le n-2^{-1}tr_{\omega(t)}f^*\chi.
\end{align}
Combining \eqref{C2.1} and \eqref{C2.2}, we can choose a constant $C\ge1$ and a sufficiently large constant $A$ such that
$$\Delta_{\omega(t)}(\log tr_{\omega(t)}f^*\chi-A\varphi(t))\ge tr_{\omega(t)}f^*\chi-C.$$
Then by the maximum principle and Lemma \ref{C0lem}, Lemma \ref{C2lem} follows.
\end{proof}

Before next step, following \cite{To10} we fix a smooth nonnegative function $\varsigma$ on $X$, which vanishes exactly on singular fibers and satisfies $$\varsigma\le1, \sqrt{-1}\partial\varsigma\wedge\bar\partial\varsigma\le Cf^*\chi, -Cf^*\chi\le\sqrt{-1}\partial\bar\partial\varsigma\le Cf^*\chi$$
on $X$ for some constant $C\ge1$.
\begin{lem}\label{C0lem.1}
Set $\bar\varphi_y(t)=\frac{1}{\int_{X_y}(\omega_{0,y})^{n-k}}\int_{X_y}\varphi(t)(\omega_{0,y})^{n-k}$ for $y\in Y\setminus S$. There exists a constants $C\ge1$ such that for all $y\in Y\setminus S$,
\begin{equation}\label{C0lem.1'}
\sup_{X_y\times[2,\infty)}e^t|\varphi(t)-\bar\varphi_y(t)|\le Ce^{C\varsigma^{-C}}.
\end{equation}
\end{lem}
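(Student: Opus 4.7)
The plan is to recast the estimate as a uniform $L^\infty$ bound on a rescaled fiberwise K\"ahler potential, and then apply pluripotential-theoretic estimates on each smooth fiber. Setting $\hat\varphi(x,t):=\varphi(x,t)-\bar\varphi_{f(x)}(t)$ on $X_{reg}\times[1,\infty)$, and using that $\bar\varphi_y(t)$ is constant along $X_y$ and $f^*\chi|_{X_y}=0$, one finds
$$e^t\omega(t)|_{X_y}=\omega_{0,y}+\sqrt{-1}\partial_f\bar\partial_f\bigl(e^t\hat\varphi|_{X_y}\bigr),$$
so $e^t\hat\varphi|_{X_y}$ is a K\"ahler potential for $e^t\omega(t)|_{X_y}$ relative to $\omega_{0,y}$, normalized by $\int_{X_y}e^t\hat\varphi\,(\omega_{0,y})^{n-k}=0$ by the definition of $\bar\varphi_y$. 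The lemma then amounts to a uniform fiberwise $L^\infty$ bound on this potential with the stated dependence on $\varsigma$.

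Next I would derive the fiberwise Monge--Amp\`ere equation satisfied by $e^t\hat\varphi$. Combining the global equation \eqref{lt1}, the horizontal bound $\mathrm{tr}_{\omega(t)}f^*\chi\le C$ of Lemma \ref{C2lem}, and the volume comparison of Lemma \ref{C0lem'}, together with a horizontal/vertical decomposition of $\omega(t)^n$ and the Kodaira-type comparison of $\Omega$ with $\omega_0^{n-k}|_{X_y}\wedge(f^*\chi)^k$ as in \cite[Proposition~3.2]{ST12}, one obtains
$$\bigl(\omega_{0,y}+\sqrt{-1}\partial_f\bar\partial_f(e^t\hat\varphi)\bigr)^{n-k}=F_{y,t}\,(\omega_{0,y})^{n-k},$$
with $C^{-1}\varsigma(y)^{C}\le F_{y,t}\le C\varsigma(y)^{-C}$ uniformly in $t\in[1,\infty)$.

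Applying Ko{\l}odziej's $L^\infty$-estimate \cite{K98} (or Yau's $C^0$ estimate \cite{Y}) fiberwise to this equation, and using the zero-average normalization, yields a pointwise bound on $e^t\hat\varphi|_{X_y}$ whose dependence on $y$ is controlled by the $L^p$-norm of $F_{y,t}$ and by the geometric constants of $(X_y,\omega_{0,y})$. Composing the $\varsigma^{-C}$ blow-up of the $L^p$-norm with the polynomial $\varsigma^{-C}$ degeneration of the fiber Sobolev/Poincar\'e (or capacity) constants near $S$, via the $\varsigma$-barrier technique of Tosatti \cite{To10}, produces the exponential bound $Ce^{C\varsigma^{-C}}$ asserted in the lemma.

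The main obstacle is the quantitative tracking of all fiber geometric quantities as $y\to S$: the total fiber volume, the Sobolev/Poincar\'e constants, and the Kodaira-type lower bound on $\Omega$ all degenerate at polynomial rates in $\varsigma^{-1}$, and composing these degenerations carefully is what yields the final exponential constant. Establishing the right form of this composition, rather than a weaker bound, is the key technical work; a global maximum-principle argument on $\pm e^t\hat\varphi-A\varsigma^{-C}$ using \eqref{ric} and Lemma \ref{C2lem} would be an alternative approach but runs into the same difficulty.
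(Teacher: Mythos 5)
Your reduction is exactly the paper's: set $\psi(t)=e^t(\varphi(t)-\bar\varphi_y(t))$, observe that $e^t\omega(t)|_{X_y}=\omega_{0,y}+\sqrt{-1}\partial_f\bar\partial_f\psi(t)$, and derive the fiberwise Monge--Amp\`ere equation
$(\omega_{0,y}+\sqrt{-1}\partial_f\bar\partial_f\psi(t))^{n-k}=F_y(t)\omega_{0,y}^{n-k}$
with $F_y(t)\le C\varsigma^{-C}$ from Lemmas \ref{C0lem'} and \ref{C2lem}. (Incidentally, the paper only needs and only proves the \emph{upper} bound on $F_y(t)$; the two-sided bound you claim is not used and not established.)

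The gap is your treatment of the fiber estimate itself. You propose to apply ``Ko{\l}odziej's $L^\infty$-estimate or Yau's $C^0$ estimate'' fiberwise, tracking Sobolev/Poincar\'e constants in $\varsigma$ via \cite{To10}. That is precisely what the paper does, but \emph{only when $n-k\ge2$}: Yau's Moser-iteration argument requires the fiberwise Sobolev inequality with exponent $\tfrac{2(n-k)}{n-k-1}$, which is meaningless when $n-k=1$ (real dimension two fibers). The paper explicitly flags this and switches to a genuinely different argument in that case: a Green's-function oscillation estimate \`a la \cite[Corollary 5.2]{ST06}, requiring a quantitative lower bound on the Green function of $(X_y,\overline\omega_{0,y})$ obtained from $Ric(\overline\omega_{0,y})=\omega_{0,y}\ge0$, a uniform fiber volume, and a $\varsigma^{-C}$ diameter bound (Claim (3)), together with a pointwise Laplacian bound $|\Delta_{\overline\omega_{0,y}}\varphi|\le Ce^{-t}\varsigma^{-C}$ (Claims (1)--(2)). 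Your proposal gives no route around the Sobolev obstruction for Riemann-surface fibers; invoking Ko{\l}odziej in dimension one would require tracking the $\varsigma$-dependence of the relative capacity, which is not available from the references you cite. As written, your argument covers only $n-k\ge2$ and misses the $n-k=1$ case, which is the one where the paper has to introduce a new idea.
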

\begin{proof}
Set $\psi(t):=e^t(\varphi(t)-\bar\varphi_y(t))$ and $\omega_{t,y}:=\omega(t)|_{X_y}=e^{-t}\omega_{0,y}+\sqrt{-1}\partial\bar\partial\varphi(t)|_{X_y}$. Then
$$e^t\omega_{t,y}=\omega_{0,b}+\sqrt{-1}\partial\bar\partial\psi(t)|_{X_y}$$
and
\begin{equation}
(\omega_{0,b}+\sqrt{-1}\partial\bar\partial\psi(t)|_{X_y})^{n-k}=e^{(n-k)t}\omega_{t,y}^{n-k}.
\end{equation}
Note that
\begin{align}
\frac{\omega_{t,y}^{n-k}}{\omega_{0,y}^{n-k}}&=\frac{\omega(t)^{n-k}\wedge f^*\chi^k}{\omega_0^{n-k}\wedge f^*\chi^k}\nonumber\\
&=\frac{\omega(t)^{n-k}\wedge f^*\chi^k}{\omega(t)^n}\frac{\omega(t)^n}{\omega_0^{n-k}\wedge f^*\chi^k}\nonumber\\
&\le C(tr_{\omega(t)}f^*\chi)^k\frac{e^{-(n-k)t}}{\varsigma^C}\nonumber\\
&\le C\frac{e^{-(n-k)t}}{\varsigma^C}\nonumber
\end{align}
Hence,
\begin{equation}\label{fib}
(\omega_{0,b}+\sqrt{-1}\partial\bar\partial\psi(t)|_{X_y})^{n-k}=F_y(t)\omega_{0,b}^{n-k}.
\end{equation}
where
\begin{equation}\label{fib1}
F_y(t)\le C\varsigma^{-C}
\end{equation}
for all $y\in Y\setminus S$ and $t\in[1,\infty)$.
Now we separate discussions into two cases.\\

\underline{Case (1): $n-k\ge2$.} For this case, by applying the arguments in \cite[Lemmas 3.2-3.4]{To10} we have
\begin{itemize}
\item[(1)] There exists a uniform constant $C\ge1$ such that for any $y\in Y\setminus S$, $t\in[1,\infty)$ and $u\in C^\infty(X_y)$, we have
\begin{equation}\label{Sob}
\left(\int_{X_y}|u|^{\frac{2(n-k)}{n-k-1}}\omega_{0,y}^{n-k}\right)^{\frac{n-k-1}{n-k}}\le C\int_{X_y}(|\nabla u|^2_{\omega_{0,y}}+|u|^2)\omega_{0,y}^{n-k}.
\end{equation}
\item[(2)] There exists a uniform constant $C\ge1$ such that for any $y\in Y\setminus S$, $t\in[1,\infty)$ and $u\in C^\infty(X_y)$ with $\int_{X_y}u\omega_{0,y}^{n-k}=0$, we have
\begin{equation}\label{Ponc}
\int_{X_y}|u|^{2}\omega_{0,y}^{n-k}\le Ce^{C\varsigma^{-C}}\int_{X_y}|\nabla u|^2_{\omega_{0,y}}\omega_{0,y}^{n-k}.
\end{equation}
\end{itemize}
Now, by combining \eqref{fib}-\eqref{Ponc} and applying Yau's $L^\infty$-estimate, we can conclude \eqref{C0lem.1'} easily.\\

\underline{Case (2): $n-k=1$.} In this case, since the real dimension of a smooth fiber $X_y$ is two (strictly less than three), it seems we can't not apply the above arguments directly. We now make use of the idea in \cite[Corollary 5.2]{ST06} to achieve \eqref{C0lem.1'} in this case.
\par Let $\Delta_{\overline\omega_{0,y}}$ be the Laplacian of $\overline\omega_{0,y}$ on the smooth fiber $X_y$, $G_y(\cdot,\cdot)$ the Green function with respect to $\overline\omega_{0,y}$ on $X_y$ and $A_y:=\inf_{X_y\times X_y}G_y(\cdot,\cdot)$. Then by Green formula for any $x\in X_y$ we have
\begin{equation}\label{green}
\varphi(x)-\frac{1}{V_0}\int_{X_y}\varphi\overline\omega_{0,y}=-\int_{z\in X_y}\Delta_{\overline\omega_{0,y}}\varphi(z)(G_y(x,z)-A_y)\overline\omega_{0,y}(z)
\end{equation}
To estimate \eqref{green}, we first note that its right hand side
\begin{equation}\label{green 1}
\left|-\int_{z\in X_y}\Delta_{\overline\omega_{0,y}}\varphi(z)(G_y(x,z)-A_y)\overline\omega_{0,y}(z)\right|\le\int_{z\in X_y}|\Delta_{\overline\omega_{0,y}}\varphi(z)|(G_y(x,z)-A_y)\overline\omega_{0,y}(z).
\end{equation}
We now collect several claims.\\
\underline{Claim (1)}: there exists a positive constant $C\ge1$ such that for all $y\in Y\setminus S$ and $t\in[1,\infty)$ we have
\begin{equation}
0<e^{-t}+\Delta_{\omega_{0,y}}\varphi\le Ce^{-t}\varsigma^{-C}.
\end{equation}
Claim (1) can be checked as follows:
\begin{align}
0<tr_{\omega_{0,y}}\omega_{t,y}&=\frac{\omega_{t,y}}{\omega_{0,y}}=\frac{\omega(t)\wedge f^*\chi}{\omega_0\wedge f^*\chi}=\frac{\omega(t)\wedge f^*\chi}{\omega(t)^2}\cdot\frac{\omega(t)^2}{\omega_0\wedge f^*\chi}=\frac{1}{2}\left(tr_{\omega(t)}f^*\chi\right)\frac{\omega(t)^2}{\omega_0\wedge f^*\chi}\nonumber\\
&\le Ce^{-t}\frac{\Omega}{\omega_0\wedge f^*\chi}\nonumber\\
&\le Ce^{-t}\varsigma^{-C}\nonumber,
\end{align}
where we have used Lemmas \ref{C0lem'} and \ref{C2lem}. Claim (1) follows.\\

\underline{Claim (2)}: there exists a constant $C\ge1$ such that for any $y\in Y\setminus S$ and $t\in[1,\infty)$ we have
\begin{equation}\label{claim 2}
\frac{\omega_{0,y}}{\overline\omega_{0,y}}\le C\varsigma^{-C}.
\end{equation}
In fact, we have
\begin{align}
\frac{\omega_{0,y}}{\overline\omega_{0,y}}&=\frac{\omega_{0}\wedge f^*\chi}{\overline\omega_{0}\wedge f^*\chi}=\frac{\omega_{0}\wedge f^*\chi}{\Omega}\frac{\Omega}{\overline\omega_{0}\wedge f^*\chi}=2G\frac{\omega_{0}\wedge f^*\chi}{\Omega}\nonumber\\
&\le C\cdot G\nonumber\\
&\le C\varsigma^{-C}\nonumber.
\end{align}
Claim (2) follows.\\

\underline{Claim (3)}: there exists a constant $C\ge1$ such that for any $y\in Y\setminus S$ we have
\begin{align}\label{fiber.diam}
diam(X_y,\overline\omega_{0,y})\le C\varsigma^{-C}.
\end{align}
To see Claim (3), we first recall that, by applying a result of Topping \cite[Theorem 1.1]{Top} (also see \cite[Lemma 3.3]{To10}, whose argument can be applied to our case directly), we can find a constant $C\ge1$ such that for all $y\in Y\setminus S$ there holds
\begin{equation}\label{diam0}
diam(X_y,\omega_{0,y})\le C.
\end{equation}
Secondly, we apply a similar argument in Claim (2) to see
\begin{align}\label{diam1}
\frac{\overline\omega_{0,y}}{\omega_{0,y}}&=\frac{\overline\omega_{0}\wedge f^*\chi}{\omega_{0}\wedge f^*\chi}=\frac{\Omega}{\omega_{0}\wedge f^*\chi}\frac{\overline\omega_{0}\wedge f^*\chi}{\Omega}=\frac{1}{2G}\frac{\Omega}{\omega_{0}\wedge f^*\chi}\nonumber\\
&\le C\varsigma^{-C},
\end{align}
where we have used a positive lower bound of $G$ contained in \eqref{Lp}. Combining \eqref{diam0} and \eqref{diam1}, Claim (3) follows.\\

Now we can complete the proof. Since we have
$$Ric(\overline\omega_{0,y})=\omega_{0,y}>0,$$
i.e. Ricci curvature of $\overline\omega_{0,y}$ is uniformly bounded from below by zero, and the volume of $(X_y,\overline\omega_{0,y})$ is a positive constant $V_0$, we can apply a result in \cite[Section 1.1 in Chapter 3]{Si} (note that in \cite[Appendix A of Section 3]{Si} a proof for manifold with real dimension greater than or equals to three is provided; for real 2-dimensional case, this result can be checked by using $L^1$-Sobolev inequality and lower bound of isoperimetric constant contained in \cite[Proposition 4 and Theorem 13]{Cr}) to find a uniform positive constant $\gamma$ such that for all $y\in Y\setminus S$ we have
\begin{align}\label{green 2}
G_y(\cdot,\cdot)&\ge-\gamma\frac{diam(X_y,\overline\omega_{0,y})^2}{V_0}.
\end{align}
Plugging \eqref{fiber.diam} into \eqref{green 2} gives
\begin{align}
G_y(\cdot,\cdot)&\ge-C\varsigma^{-C}\nonumber
\end{align}
and hence
\begin{align}\label{claim 4}
A_y\ge-C\varsigma^{-C}.
\end{align}
On the other hand, combining Claims (1) and (2) we have
\begin{align}\label{claim 5}
|\Delta_{\overline\omega_{0,y}}\varphi|&=|\Delta_{\omega_{0,y}}\varphi|\frac{\omega_{0,y}}{\overline\omega_{0,y}}\nonumber\\
&\le Ce^{-t}\varsigma^{-C}.
\end{align}
Plugging \eqref{claim 4} and \eqref{claim 5} into \eqref{green 1} we find that
$$|\varphi-\frac{1}{V_0}\int_{X_y}\varphi\overline\omega_{0,y}|\le Ce^{-t}\varsigma^{-C}$$
on $X_y$ and hence
$$\sup_{X_y}\varphi-\inf_{X_y}\varphi\le Ce^{-t}\varsigma^{-C},$$
which implies
$$\sup_{X_y\times[2,\infty)}e^{t}|\varphi(t)-\bar\varphi_y(t)|\le C\varsigma^{-C}.$$
\par Lemma \ref{C0lem.1} is proved.
\end{proof}

Define a smooth function $\bar\varphi(t)$ on $Y\setminus S\times[1,\infty)$ by $\bar\varphi(y,t):=\bar\varphi_y(t)$.

\begin{lem}\label{C2lem.2}\cite[Lemma 5.9]{ST06}
There exists a uniform constant $C\ge1$ such that on $Y\times[1,\infty)$ we have
\begin{equation}\label{C2lem.2'}
\Delta_{\omega(t)}(e^{t}(\varphi(t)-\bar\varphi(t)))\le-tr_{\omega(t)}\omega_0+Ce^{t}+C\varsigma^{-C}.
\end{equation}
\end{lem}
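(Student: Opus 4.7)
The plan is to split
\[
\Delta_{\omega(t)}\bigl(e^{t}(\varphi(t)-\bar\varphi(t))\bigr) = e^t \Delta_{\omega(t)}\varphi(t) - e^t \Delta_{\omega(t)}\bar\varphi(t)
\]
and bound the two pieces separately: the first by a direct trace computation on $X$, and the second by viewing $\bar\varphi$ as a fiber integral over $Y\setminus S$ and using fiber integration together with the horizontal Hessian formula.

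For the first piece, I would plug $\omega(t)=\omega_t+\sqrt{-1}\partial\bar\partial\varphi(t)$ with $\omega_t=e^{-t}\omega_0+(1-e^{-t})f^*\chi$ into the trace to obtain
\[
\Delta_{\omega(t)}\varphi(t) = n - e^{-t}\,tr_{\omega(t)}\omega_0 - (1-e^{-t})\,tr_{\omega(t)}f^*\chi.
\]
Multiplying by $e^t$ and discarding the non-positive $f^*\chi$-trace term yields
\[
\Delta_{\omega(t)}(e^t\varphi(t)) \le ne^t - tr_{\omega(t)}\omega_0.
\]

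For the second piece, observe that $V_0\,\bar\varphi = f_*(\varphi\,\omega_0^{n-k})$ on $Y\setminus S$, where $V_0=\int_{X_y}\omega_{0,y}^{n-k}$ is the (constant) volume. Since $\omega_0$ is closed, pushforward commutes with $\sqrt{-1}\partial\bar\partial$, and applying the projection formula to the $f^*\chi$-term gives
\[
V_0\,\sqrt{-1}\partial\bar\partial\bar\varphi = f_*(\omega(t)\wedge\omega_0^{n-k}) - e^{-t}\,f_*(\omega_0^{n-k+1}) - (1-e^{-t})\,V_0\,\chi
\]
as $(1,1)$-forms on $Y\setminus S$. Since $\omega(t)\wedge\omega_0^{n-k}$ is a positive form, its fiber integral is non-negative, so I obtain the lower bound
\[
\sqrt{-1}\partial\bar\partial\bar\varphi \ge -\frac{e^{-t}}{V_0}\,f_*(\omega_0^{n-k+1}) - \chi.
\]
The key auxiliary estimate is $f_*(\omega_0^{n-k+1})\le C\varsigma^{-C}\chi$ on $Y\setminus S$ (after pulling back via $f$); granted this, taking $tr_{\omega(t)}$ and invoking Lemma \ref{C2lem} ($tr_{\omega(t)}f^*\chi\le C$) yields
\[
\Delta_{\omega(t)}\bar\varphi(t) \ge -C - Ce^{-t}\varsigma^{-C}.
\]
Combining this with the first-piece bound, multiplying by $-e^t$ in the right place and absorbing $ne^t$ into a new $Ce^t$, produces exactly the inequality claimed in \eqref{C2lem.2'}.

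The main obstacle is establishing the pointwise estimate $f_*(\omega_0^{n-k+1})\le C\varsigma^{-C}\chi$: although $\omega_0^{n-k+1}$ is smooth and $f_*(\omega_0^{n-k+1})$ is therefore smooth on $Y\setminus S$, it generically degenerates as one approaches the critical values of $f$, and controlling this blow-up by a polynomial weight in $\varsigma$ relies on the local description of $f$ near singular fibers together with the definition of $\varsigma$. Once this estimate is in hand, the remaining steps are routine from \eqref{lt1} and Lemma \ref{C2lem}.
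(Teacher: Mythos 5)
Your overall decomposition is sound and the computations you carry out are correct. Indeed, from $\omega(t)=\omega_t+\sqrt{-1}\partial\bar\partial\varphi(t)$ one gets
\[
\Delta_{\omega(t)}\varphi(t)=n-e^{-t}\,tr_{\omega(t)}\omega_0-(1-e^{-t})\,tr_{\omega(t)}f^*\chi,
\]
so $e^t\Delta_{\omega(t)}\varphi(t)\le ne^t - tr_{\omega(t)}\omega_0$; and the fiber-integral identity
\[
V_0\,\sqrt{-1}\partial\bar\partial\bar\varphi = f_*\bigl(\omega(t)\wedge\omega_0^{n-k}\bigr) - e^{-t}f_*\bigl(\omega_0^{n-k+1}\bigr) - (1-e^{-t})V_0\,\chi
\]
is valid on $Y\setminus S$ (fiber integration commutes with $\partial$ and $\bar\partial$, $\omega_0$ is closed, and the projection formula gives $f_*(f^*\chi\wedge\omega_0^{n-k})=V_0\chi$). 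The positivity of $f_*(\omega(t)\wedge\omega_0^{n-k})$ and the use of Lemma \ref{C2lem} to trace $f^*\chi$ against $\omega(t)$ are also correct, so granted your auxiliary estimate, the pieces assemble to give exactly \eqref{C2lem.2'}. This is essentially the strategy behind \cite[Lemma 5.9]{ST06}, which the paper cites for this lemma.

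However, the proof as written has a genuine gap, and it sits at precisely the place where the lemma has content: you assert, but do not establish, that
\[
f_*\bigl(\omega_0^{n-k+1}\bigr)\le C\varsigma^{-C}\chi \quad\text{on } Y\setminus S.
\]
Everything else in the argument is bookkeeping; this inequality is the actual analytic input that explains why the error term in \eqref{C2lem.2'} takes the form $C\varsigma^{-C}$ rather than something worse. Note also that $f_*(\omega_0^{n-k+1})$ is a $(1,1)$-form on $Y\setminus S$ while $\varsigma$ lives on $X$; one must either use that $\varsigma$ descends to $Y$ (i.e.\ is constant on fibers, which is consistent with ``vanishes exactly on singular fibers'' but should be said explicitly) or pull both sides back by $f$ before comparing. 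The estimate is in fact exactly the sort of thing the cutoff $\varsigma$ is engineered to deliver (cf.\ the constructions in \cite{To10,ST06,ST12}), and it can be derived from the local description of $f$ near singular fibers together with the defining properties of $\varsigma$, but until you supply that derivation or a precise citation your argument is incomplete: you have reduced the lemma to a nontrivial sub-lemma rather than proved it.
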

\begin{proof}
For a proof, see \cite[Lemma 5.9]{ST06}.
\end{proof}

\begin{lem}\label{C2lem.3}
There exists a uniform constant $C\ge1$ such that on $Y\times[1,\infty)$ we have
\begin{equation}\label{C2lem.3'}
tr_{\omega(t)}(e^{-t}\omega_0)\le e^{Ce^{C\varsigma^{-C}}}.
\end{equation}
\end{lem}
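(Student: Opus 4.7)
The plan is a maximum-principle argument on an auxiliary function combining $\log tr_{\omega(t)}(e^{-t}\omega_0)$ with the quantity from Lemma \ref{C2lem.2}; the double-exponential form of the bound appears at the end when one inserts Lemma \ref{C0lem.1}. First I would apply a Chern--Lu type Schwarz lemma to the smooth reference metric $\omega_0$: combining the lower Ricci bound $Ric(\omega(t))\ge-2\omega(t)$ from \eqref{ric} with the bounded bisectional curvature of the smooth K\"ahler form $\omega_0$ yields
\begin{equation*}
\Delta_{\omega(t)}\log tr_{\omega(t)}(e^{-t}\omega_0)=\Delta_{\omega(t)}\log tr_{\omega(t)}\omega_0\ge -C_0\cdot tr_{\omega(t)}\omega_0-C
\end{equation*}
on $X_{reg}\times[1,\infty)$ for uniform constants $C_0,C>0$. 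Introducing the auxiliary function
\begin{equation*}
H:=\log tr_{\omega(t)}(e^{-t}\omega_0)-A\cdot e^t(\varphi-\bar\varphi)
\end{equation*}
for a constant $A>C_0$ and combining with Lemma \ref{C2lem.2} (using $tr_{\omega(t)}\omega_0=e^t\cdot tr_{\omega(t)}(e^{-t}\omega_0)$), one obtains
\begin{equation*}
\Delta_{\omega(t)}H\ge tr_{\omega(t)}\omega_0-Ce^t-C\varsigma^{-C}.
\end{equation*}

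Next I would apply the maximum principle on $X_{reg}$. Since $X_{reg}$ is non-compact and $H$ may degenerate near the singular fibers, this must be regularized by subtracting a suitable barrier built from $\varsigma$ so that the modified function attains its maximum at an interior point of $X_{reg}$; the contribution of the barrier to the Laplacian can be controlled using the structural bounds on $\varsigma$ together with Lemma \ref{C2lem}, and absorbed into the existing $C\varsigma^{-C}$ term. At the resulting maximum $(x_0,t_0)$ the inequality above gives
\begin{equation*}
tr_{\omega(t_0)}\omega_0(x_0)\le Ce^{t_0}+C\varsigma(x_0)^{-C},
\end{equation*}
so that $tr_{\omega(t_0)}(e^{-t_0}\omega_0)(x_0)\le C\varsigma(x_0)^{-C}$ and $\log tr|_{(x_0,t_0)}\le C\varsigma(x_0)^{-C}$ (using $\log s\le Cs$ for the large values of $s=\varsigma^{-1}$).

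Finally, inserting the estimate $|e^t(\varphi-\bar\varphi)|\le Ce^{C\varsigma^{-C}}$ from Lemma \ref{C0lem.1}, the maximum of $H$ satisfies
\begin{equation*}
H(x_0,t_0)\le C\varsigma(x_0)^{-C}+AC e^{C\varsigma(x_0)^{-C}}\le Ce^{C\varsigma(x_0)^{-C}},
\end{equation*}
so that for any other $(x,t)$ one has
\begin{equation*}
\log tr_{\omega(t)}(e^{-t}\omega_0)(x)=H(x,t)+A\cdot e^t(\varphi-\bar\varphi)(x)\le Ce^{C\varsigma(x)^{-C}},
\end{equation*}
which is exactly $tr_{\omega(t)}(e^{-t}\omega_0)\le e^{Ce^{C\varsigma^{-C}}}$. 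The main technical obstacle is the regularization at the singular fibers: because Lemma \ref{C0lem.1} already permits $\tilde\varphi$ to grow as fast as $Ce^{C\varsigma^{-C}}$, one has to choose a barrier that dominates this potential blow-up of $H$ (so that the max is attained at a point where $\varsigma$ is bounded below and the two exponential contributions to $\log tr(x)$ match up in the correct variable $\varsigma(x)$) while contributing an error to the Laplacian inequality that can still be absorbed by the existing $C\varsigma^{-C}$; this is done using the prescribed bounds $\sqrt{-1}\partial\varsigma\wedge\bar\partial\varsigma\le Cf^*\chi$ and $-Cf^*\chi\le\sqrt{-1}\partial\bar\partial\varsigma\le Cf^*\chi$ together with Lemma \ref{C2lem}.
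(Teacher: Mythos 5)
Your opening steps match the paper exactly: the Chern--Lu inequality for $\omega_0$, the auxiliary function $K=\log tr_{\omega(t)}(e^{-t}\omega_0)-A e^t(\varphi-\bar\varphi)$ (which you call $H$), and the resulting differential inequality $\Delta_{\omega(t)}K\ge tr_{\omega(t)}\omega_0-Ce^t-C\varsigma^{-C}$. You also correctly identify the obstacle, namely that $K$ may blow up like $Ce^{C\varsigma^{-C}}$ near the singular fibers because of Lemma~\ref{C0lem.1}, so one cannot directly apply a maximum principle on the noncompact set $X_{reg}$. But the fix you propose --- ``subtracting a suitable barrier built from $\varsigma$'' whose Laplacian ``can be absorbed into the existing $C\varsigma^{-C}$ term'' --- is the wrong regularization and cannot work as stated. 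Any additive barrier $B$ that dominates the potential growth $Ce^{C\varsigma^{-C}}$ of $K$ must itself be of the form $e^{C\varsigma^{-C}}$ (or larger), and then by the chain rule $\Delta_{\omega(t)}B$ carries a factor $e^{C\varsigma^{-C}}\varsigma^{-C'}\cdot tr_{\omega(t)}f^*\chi$. This is exponentially larger than the tolerable error $C\varsigma^{-C}$ in your inequality, and it also swamps the good term $tr_{\omega(t)}\omega_0$ unless you already know that trace grows exponentially --- which is precisely what you are trying to prove.

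The paper resolves this with a \emph{multiplicative} rather than additive regularization: it sets $E_1=e^{-C_1\varsigma^{-C_1}}$ and considers $H=E_1 K$. Two things make this work. First, since $C_1$ is chosen so that $C_1-2$ satisfies Lemma~\ref{C0lem.1}, the weight $E_1$ decays faster than $K$ can blow up, so $H$ is automatically bounded near the singular fibers. Second, when one computes $\Delta_{\omega(t)}H=E_1\Delta K+K\Delta E_1+2Re\langle\nabla E_1,\bar\nabla K\rangle$, the dangerous factor $e^{-C_1\varsigma^{-C_1}}$ appears in front of $\Delta E_1$ and $|\nabla E_1|^2$, so the error terms are not exponential but merely of size $Ce^t$ after the Young-type absorptions in \eqref{b} and \eqref{c}; the cross term is rewritten as a gradient term in $H$ itself so that it vanishes at an interior maximum. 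At a max point the inequality \eqref{d} then gives $e^{-C_1\varsigma^{-C_1}}tr_{\omega(t)}\omega_0\le Ce^t$, i.e.\ $tr_{\omega(t)}(e^{-t}\omega_0)\le Ce^{C_1\varsigma^{-C_1}}$ \emph{at that point}, and consequently $H\le C$ uniformly; unwinding the definition of $H$ and $K$ yields the double-exponential bound everywhere. So the essential idea you are missing is to conjugate by the weight $E_1$ rather than subtract a barrier, which is what allows the singular-fiber degeneracy to be handled without introducing an uncontrollable Laplacian error.

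A smaller issue: your concluding step also silently assumes the maximum of $H$ occurs where $\varsigma$ is as large as at the point $x$ you want to estimate; even with a barrier, you do not justify that the ``two exponential contributions match up in the correct variable $\varsigma(x)$.'' The multiplicative weighting makes this concern moot because it gives a $t$-uniform bound $H(\cdot,t)\le C$, from which the pointwise estimate on $tr_{\omega(t)}(e^{-t}\omega_0)$ at each $x$ follows directly using the value of $\varsigma$ at that same $x$.
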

\begin{proof}
To prove this lemma, we simply modify arguments in \cite[Theorem 5.2]{ST06}. Firstly, by Schwarz lemma argument (see e.g. \cite{Y78}) we have a constant $C\ge1$ such that for $t\in[1,\infty)$,
\begin{equation}\label{C2lem.3.1}
\Delta_{\omega(t)}\log tr_{\omega(t)}(e^{-t}\omega_0)\ge-Ctr _{\omega(t)}\omega-C.
\end{equation}
Combining \eqref{C2lem.2'} and \eqref{C2lem.3.1}, we know for some constants $C\ge1$ and $A\ge1$ and all $t\in[1,\infty)$,
\begin{equation}\label{C2lem.3.2}
\Delta_{\omega(t)}(\log tr_{\omega(t)}(e^{-t}\omega_0)-Ae^t(\varphi(t)-\bar\varphi(t)))\ge tr_{\omega(t)}\omega_0-Ce^t-C\varsigma^{-C}.
\end{equation}
Choose a positive constant $C_1$ such that $C_1-2$ satisfies Lemma \ref{C0lem.1}. Set
$$K=\log tr_{\omega(t)}(e^{-t}\omega_0)-Ae^t(\varphi(t)-\bar\varphi(t))$$
and $H=e^{-C_1\varsigma^{-C_1}}K$. Compute
\begin{align}\label{a0}
\Delta_{\omega(t)}H&=e^{-C_1\varsigma^{-C_1}}\Delta_{\omega(t)}K+K\Delta_{\omega(t)}(e^{-C_1\varsigma^{-C_1}})+2Re\langle\nabla(e^{-C_1\varsigma^{-C_1}}),\bar\nabla K\rangle_{\omega(t)}\nonumber\\
&=:I+II+III.
\end{align}
Firstly, using \eqref{C2lem.3.2}, we see that
\begin{align}\label{a}
I=e^{-C_1\varsigma^{-C_1}}\Delta_{\omega(t)}K&\ge e^{-C_1\varsigma^{-C_1}}(tr_{\omega(t)}\omega_0-Ce^t-C\varsigma^{-C})\nonumber\\
&\ge  e^{-C_1\varsigma^{-C_1}}tr_{\omega(t)}\omega_0-Ce^t.
\end{align}
Secondly, using
$$|\Delta_{\omega(t)}(e^{-C_1\varsigma^{-C_1}})|\le\frac{C_2e^{-C_1\varsigma^{-C_1}}}{\varsigma^{C_1+2}}$$
for some constant $C_2\ge1$, we see that
\begin{align}\label{b}
II=K\Delta_{\omega(t)}(e^{-C_1\varsigma^{-C_1}})&\ge-\frac{C_2e^{-C_1\varsigma^{-C_1}}}{\varsigma^{C_1+2}}|\log tr_{\omega(t)}\omega_0|-Ct\nonumber\\
&=-\frac{C_2e^{-C_1\varsigma^{-C_1}}}{\varsigma^{C_1+2}}|2\log \sqrt{tr_{\omega(t)}\omega_0}|-Ct\nonumber\\
&\ge-\frac{C_2e^{-C_1\varsigma^{-C_1}}}{\varsigma^{C_1+2}}\left(\frac{\varsigma^{C_1+2}tr_{\omega(t)}\omega_0}{4C_2}+4C_2\varsigma^{-(C_1+2)}\right)-Ct\nonumber\\
&\ge-\frac{1}{4}e^{-C_1\varsigma^{-C_1}}tr_{\omega(t)}\omega_0-Ct.
\end{align}
On the other hand, using
$$|\nabla\varsigma|_{\omega(t)}^2\le \frac{C_3e^{-2C_1\varsigma^{-C_1}}}{\varsigma^{2C_1+2}}$$
for some constant $C_3\ge1$, we see that
\begin{align}\label{c}
III&=2Re\langle\nabla(e^{-C_1\varsigma^{-C_1}}),\bar\nabla K\rangle_{\omega(t)}\nonumber\\
&=2Re\langle\nabla(e^{-C_1\varsigma^{-C_1}}),\bar\nabla (\frac{H}{e^{-C_1\varsigma^{-C_1}}})\rangle_{\omega(t)}\nonumber\\
&=2e^{C_1\varsigma^{-C_1}}Re\langle\nabla(e^{-C_1\varsigma^{-C_1}}),\bar\nabla H\rangle_{\omega(t)}-\frac{2H}{e^{-2C_1\varsigma^{-C_1}}}|\nabla(e^{-C_1\varsigma^{-C_1}})|^2_{\omega(t)}\nonumber\\
&\ge2e^{C_1\varsigma^{-C_1}}Re\langle\nabla(e^{-C_1\varsigma^{-C_1}}),\bar\nabla H\rangle_{\omega(t)}-2C_3e^{-C_1\varsigma^{-C_1}}(|\log tr_{\omega(t)}(e^{-t}\omega_0)|+|Ae^t(\varphi(t)-\bar\varphi(t))|)\varsigma^{-2(C_1+2)}\nonumber\\
&\ge2e^{C_1\varsigma^{-C_1}}Re\langle\nabla(e^{-C_1\varsigma^{-C_1}}),\bar\nabla H\rangle_{\omega(t)}-2C_3\frac{e^{-C_1\varsigma^{-C_1}}}{\varsigma^{2C_1+2}}|\log tr_{\omega(t)}\omega_0|-Ct\nonumber\\
&\ge2e^{C_1\varsigma^{-C_1}}Re\langle\nabla(e^{-C_1\varsigma^{-C_1}}),\bar\nabla H\rangle_{\omega(t)}-2C_3\frac{e^{-C_1\varsigma^{-C_1}}}{\varsigma^{2C_1+2}}\left(\frac{\varsigma^{2C_1+2}tr_{\omega(t)}\omega_0}{8C_3}+8C_3\varsigma^{-(2C_1+2)}\right)-Ct\nonumber\\
&\ge2e^{C_1\varsigma^{-C_1}}Re\langle\nabla(e^{-C_1\varsigma^{-C_1}}),\bar\nabla H\rangle_{\omega(t)}-\frac{1}{4}e^{-C_1\varsigma^{-C_1}}tr_{\omega(t)}\omega_0-Ct.
\end{align}
Now we plug \eqref{a}, \eqref{b} and \eqref{c} into \eqref{a0} and obtain
\begin{equation}\label{d}
\Delta_{\omega(t)}H\ge\frac{1}{2}e^{-C_1\varsigma^{-C_1}}tr_{\omega(t)}\omega_0+2e^{C_1\varsigma^{-C_1}}Re\langle\nabla(e^{-C_1\varsigma^{-C_1}}),\bar\nabla H\rangle_{\omega(t)}-Ce^{t}.
\end{equation}
For any $t\in[1,\infty)$, let $x_t$ be a maximal point of $H(t)$. If $x_t\in Y\setminus Y_{reg}$, then $H(x_t,t)\le C$ for some uniform constant $C\ge1$; if $x_t\in Y_{reg}$, we apply the maximal principle in \eqref{d} to see that
$$tr_{\omega(t)}(e^{-t}\omega_0)\le Ce^{C_1\varsigma^{-C_1}}$$
at $x_t$ and hence
$$H(x_t,t)\le C$$
for some uniform constant $C\ge1$. In conclusion, there exists a uniform constant $C\ge1$ such that for all $t\in[1,\infty)$,
$$H(t)\le C,$$
from which we see that
$$tr_{\omega(t)}(e^{-t}\omega_0)\le e^{Ce^{C\varsigma^{-C}}}$$
for some uniform constant $C\ge1$.
\par Lemma \ref{C2lem.3} is proved.
\end{proof}
\begin{lem}\label{C2lem.4}
There exists a constant $C\ge$ such that for all $t\in[1,\infty)$,
\begin{equation}\label{e}
e^{-Ce^{C\varsigma^{-C}}}\omega_t\le\omega(t)\le e^{Ce^{C\varsigma^{-C}}}\omega_t.
\end{equation}
\end{lem}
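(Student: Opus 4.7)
The plan is to upgrade the trace estimate of Lemma~\ref{C2lem.3} into the two-sided metric equivalence \eqref{e}, handling the two directions separately. First, combining Lemmas~\ref{C2lem} and~\ref{C2lem.3}, since $\omega_t=e^{-t}\omega_0+(1-e^{-t})f^*\chi$ and $1-e^{-t}\le 1$ on $[1,\infty)$, one has
\[
tr_{\omega(t)}\omega_t = tr_{\omega(t)}(e^{-t}\omega_0) + (1-e^{-t})\,tr_{\omega(t)}f^*\chi \le e^{Ce^{C\varsigma^{-C}}} + C \le e^{Ce^{C\varsigma^{-C}}}.
\]
Since each eigenvalue of $\omega(t)^{-1}\omega_t$ is bounded above by the trace, this immediately yields $\omega_t\le e^{Ce^{C\varsigma^{-C}}}\,\omega(t)$, which is the lower-bound half of the lemma.

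For the reverse inequality the plan is to apply the elementary pointwise estimate
\[
tr_{\omega_t}\omega(t) \;\le\; C_n\,\frac{\omega(t)^n}{\omega_t^n}\,\bigl(tr_{\omega(t)}\omega_t\bigr)^{n-1},
\]
which follows by writing both sides in terms of the eigenvalues $\lambda_1,\ldots,\lambda_n$ of $\omega_t^{-1}\omega(t)$ and using the identity $e_1(\lambda)=e_n(\lambda)\,e_{n-1}(1/\lambda)$ together with $e_{n-1}(x_1,\ldots,x_n)\le n(\sum_j x_j)^{n-1}$ for $x_j\ge 0$. The factor $(tr_{\omega(t)}\omega_t)^{n-1}$ is already controlled by the previous step, so the remaining task is to bound $\omega(t)^n/\omega_t^n$ uniformly.

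For this volume ratio, Lemma~\ref{C0lem'} gives $\omega(t)^n\le Ce^{-(n-k)t}\Omega$. To bound $\omega_t^n$ from below, expand
\[
\omega_t^n=\sum_{j=0}^{k}\binom{n}{j}e^{-(n-j)t}(1-e^{-t})^j\,\omega_0^{n-j}\wedge(f^*\chi)^j
\]
and retain only the dominant $j=k$ term (all summands are nonnegative since $\omega_0>0$ and $f^*\chi\ge 0$) to obtain $\omega_t^n\ge c\,e^{-(n-k)t}\,\omega_0^{n-k}\wedge(f^*\chi)^k$ for $t\ge 1$. The pointwise degeneracy bound $\omega_0^{n-k}\wedge(f^*\chi)^k\ge c\,\varsigma^C\Omega$, already invoked in Claim~(1) of Lemma~\ref{C0lem.1}, then gives $\omega(t)^n/\omega_t^n\le C\varsigma^{-C}$. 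Substituting this back and absorbing both the factor $\varsigma^{-C}$ and the $(n-1)$-th power into the double exponential (using $\varsigma^{-C}\le e^{C\varsigma^{-C}}$) yields $tr_{\omega_t}\omega(t)\le e^{Ce^{C\varsigma^{-C}}}$, hence the upper bound $\omega(t)\le e^{Ce^{C\varsigma^{-C}}}\omega_t$.

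The only real difficulty is bookkeeping: every analytic ingredient is already in place, and the task amounts to verifying that the various polynomial-in-$\varsigma^{-1}$ factors introduced along the way can be dominated by the double-exponential $e^{Ce^{C\varsigma^{-C}}}$ coming from Lemma~\ref{C2lem.3}. This is immediate from $\varsigma^{-C}\le e^{C\varsigma^{-C}}$, so the proof reduces to the linear-algebra identity above plus the estimates already developed.
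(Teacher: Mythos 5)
Your proposal is correct and follows essentially the same route as the paper's own proof: the lower bound comes from combining Lemmas~\ref{C2lem} and~\ref{C2lem.3} via the trace $tr_{\omega(t)}\omega_t$, and the upper bound comes from the elementary eigenvalue inequality $tr_{\omega_t}\omega(t)\le C_n\,(tr_{\omega(t)}\omega_t)^{n-1}\,\omega(t)^n/\omega_t^n$ together with the volume-ratio bound $\omega(t)^n/\omega_t^n\le C\varsigma^{-C}$. The only difference is that you spell out the derivation of this last bound (which the paper uses without comment, though it is implicit in the computations inside Lemma~\ref{C0lem.1}); your derivation via the binomial expansion of $\omega_t^n$, Lemma~\ref{C0lem'}, and the degeneracy estimate $\omega_0^{n-k}\wedge(f^*\chi)^k\ge c\,\varsigma^{C}\,\Omega$ is correct.
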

\begin{proof}
The left hand side of \eqref{e} follows by combining Lemmas \ref{C2lem} and \ref{C2lem.3}. For the right hand side,
\begin{align}
tr_{\omega_t}\omega(t)&\le(n-1)!(tr_{\omega(t)}\omega_t)^{n-1}\frac{\omega(t)^n}{\omega_t^n}\le e^{C(n-1)e^{C\varsigma^{-C}}}\frac{C}{\varsigma^{C}}\le e^{C'e^{C'\varsigma^{-C'}}}\nonumber
\end{align}
for some uniform constant $C'\ge1$, from which the right hand side of \eqref{e} follows.
\par Lemma \ref{C2lem.4} is proved.
\end{proof}

We arrive at the main result in this section.
\begin{thm}\label{L1conv}
As $t\to\infty$, $\varphi(t)\to f^*\psi$ in $L^1(X,\Omega)$- and $C^{1,\alpha}(X_{reg},\omega_0)$-topology for any $\alpha\in(0,1)$. Here $\psi$ is the unique solution to \eqref{limit}. Consequently, as $t\to\infty$, $\omega(t)\to f^*\omega_Y$ in the current sense on $X$.
\end{thm}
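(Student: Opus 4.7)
The plan is a compactness-plus-uniqueness argument. First, Lemma~\ref{C0lem.1} gives $|\varphi(t)-f^*\bar\varphi(t)|\le Ce^{-t}e^{C\varsigma^{-C}}$ on $X_{reg}$; combined with the uniform $L^\infty$ bound from Lemma~\ref{C0lem} and the fact that $X\setminus X_{reg}=f^{-1}(S)$ has measure zero, dominated convergence yields $\|\varphi(t)-f^*\bar\varphi(t)\|_{L^1(X,\Omega)}\to 0$. By pluripotential compactness of uniformly bounded $\omega_t$-psh functions (with $\omega_t\to f^*\chi$), any sequence $t_j\to\infty$ admits a subsequence along which $\varphi(t_j)\to f^*\bar\varphi_\infty$ in $L^1(X,\Omega)$ for some $\bar\varphi_\infty\in L^\infty(Y)\cap PSH(Y,\chi)$.

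The key step is to show $\bar\varphi_\infty$ is a weak solution of~\eqref{limit}; then Kolodziej/EGZ uniqueness forces $\bar\varphi_\infty=\psi$ for every subsequence, hence convergence of the full net. To identify the limit equation I would test~\eqref{lt1}, multiplied by $e^{(n-k)t}$, against $f^*F$ for an arbitrary smooth $F$ compactly supported in $Y\setminus S$:
\[
\int_X f^*F\cdot e^{(n-k)t}\omega(t)^n=\int_X f^*F\cdot e^{\varphi(t)/(1-e^{-t})}\Omega.
\]
On the right-hand side, the uniform $L^\infty$-boundedness of $\varphi$ plus Lemma~\ref{C0lem.1} give almost-everywhere convergence of the integrand inside the exponential, and the pushforward identity $f_*\Omega=\binom{n}{k}V_0G\chi^k$ from~\eqref{equation} yields the limit $\binom{n}{k}V_0\int_Y F e^{\bar\varphi_\infty}G\chi^k$. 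For the left-hand side, write $\omega_Y(t):=\chi+\sqrt{-1}\partial\bar\partial\bar\varphi(t)$ on $Y\setminus S$ and decompose $\omega(t)=(1-e^{-t})f^*\omega_Y(t)+\alpha(t)$ with $\alpha(t):=e^{-t}\omega_0+\sqrt{-1}\partial\bar\partial\tilde\varphi(t)$ and $\tilde\varphi(t):=\varphi(t)-(1-e^{-t})f^*\bar\varphi(t)$. Since $(f^*\omega_Y(t))^{k+1}=0$ the binomial expansion of $\omega(t)^n$ terminates at base-degree $k$. Lemma~\ref{C2lem.4} localizes to $\omega(t)\sim\omega_t$ on $\mathrm{supp}\,F$, so positivity forces the mixed components $\partial_{y_i}\partial_{\bar x_a}\varphi$ to decay as $O(e^{-t/2})$ while the fiber components of $\alpha(t)$ are $O(e^{-t})$; tracking orders, the top $j=k$ term is the only one that survives in $e^{(n-k)t}\omega(t)^n$, contributing $\binom{n}{k}V_0\,\omega_Y(t)^k$ after fiber integration via the cohomological identity $\int_{X_y}\omega(t)|_{X_y}^{n-k}=e^{-(n-k)t}V_0$. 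Passing $t_j\to\infty$ with Bedford-Taylor weak continuity of complex Monge-Amp\`ere for uniformly bounded $\chi$-psh potentials, then varying $F$ and extending across $S$ using the $L^\infty$ bound on $\bar\varphi_\infty$ and~\eqref{Lp}, identifies $\bar\varphi_\infty$ as a bounded $\chi$-psh solution of~\eqref{limit}.

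For the $C^{1,\alpha}(K,\omega_0)$-convergence on compacts $K\subset\subset X_{reg}$, Lemma~\ref{C2lem.4} localizes to uniform two-sided bounds $c(K)\omega_t\le\omega(t)\le C(K)\omega_t$, giving uniform two-sided bounds on $\sqrt{-1}\partial\bar\partial\varphi(t)$ over $K$. Hence~\eqref{lt1} is uniformly elliptic on $K$ with $C^0$-bounded right-hand side after dividing by $e^{-(n-k)t}$; Evans-Krylov plus standard Schauder bootstrap yields uniform $C^{2,\alpha}(K)$ estimates on $\varphi(t)$. Combined with the already-proved $L^1$ convergence, Arzel\`a-Ascoli upgrades to $C^{1,\alpha}(K,\omega_0)$ convergence of $\varphi(t)$ to $f^*\psi$. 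The weak convergence $\omega(t)\to f^*\omega_Y$ as currents on $X$ then follows by applying $\sqrt{-1}\partial\bar\partial$ to the $L^1$ convergence of potentials.

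The main obstacle is the extraction of the leading-order term from $e^{(n-k)t}\omega(t)^n$ on the left-hand side of the pushed-down equation: the binomial expansion contains potentially competing $j<k$ terms of the same nominal order $e^{-(n-k)t}$, and one must show that the horizontal and mixed contributions vanish in the limit using the fiberwise $C^0$-decay from Lemma~\ref{C0lem.1} together with the $O(e^{-t/2})$ positivity bound on the second fundamental form $\partial_{y_i}\partial_{\bar x_a}\varphi$ afforded by Lemma~\ref{C2lem.4}. The $L^{1+\epsilon}$-integrability~\eqref{Lp} of $G$ is precisely what provides both the uniqueness of~\eqref{limit} and the stability needed to extend the weak identity across the singular set $S$, closing the argument.
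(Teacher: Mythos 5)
Your overall strategy matches the paper's: extract $L^1$-compact subsequential limits of the form $f^*\bar\varphi_\infty$ using Lemma~\ref{C0lem.1} and the uniform $L^\infty$ bound, identify the limit as a solution of~\eqref{limit}, and then invoke uniqueness to conclude convergence of the full family; the $C^{1,\alpha}$ step via Lemma~\ref{C2lem.4} and Arzel\`a-Ascoli is also the same. The two places where your sketch diverges from the paper are worth flagging. First, the identification of the limiting Monge-Amp\`ere equation on $Y\setminus S$: you attempt a direct order-tracking of the binomial expansion of $e^{(n-k)t}\omega(t)^n$, whereas the paper delegates this to the arguments of \cite[Lemmas~2.7, 2.8]{ZyZz} (and \cite{To10}). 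Your sketch is the right kind of argument and you correctly identify where the danger lies (the lower-order mixed terms in $\omega(t)^n$), but as written the order-counting is incomplete: the bound $|\partial_{y}\partial_{\bar w}\varphi|=O(e^{-t/2})$ from Cauchy--Schwarz applied to $\omega(t)$ is the key input, and you should verify explicitly that each $j<k$ term in the expansion is $o(e^{-(n-k)t})$ locally uniformly on $f^{-1}(K)$ rather than merely $O(e^{-(n-k)t})$.

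Second, and more substantively, your treatment of the singular set $S$ is missing the mechanism the paper actually uses: $Y$ is a \emph{singular} normal variety, so the uniqueness theorem of \cite{EGZ,ST12,K98} cannot be applied to $\bar\varphi_\infty$ directly on $Y$. The paper passes to a resolution $\pi:\hat Y\to Y$, observes that $\pi^*\bar\varphi_\infty$ is a bounded $\pi^*\chi$-psh function on the \emph{smooth} $\hat Y$, and then uses the fact that $(\pi^*\chi+\sqrt{-1}\partial\bar\partial\,\pi^*\bar\varphi_\infty)^k$ charges no mass on the pluripolar set $\pi^{-1}(S)$ (because the potential is bounded, \cite{K05}), while $e^{\pi^*\bar\varphi_\infty}\pi^*G(\pi^*\chi)^k$ also charges no mass there (by the $L^{1+\epsilon}$ bound~\eqref{Lp} and H\"older), so the weak equation extends from $\hat Y\setminus\pi^{-1}(S)$ to all of $\hat Y$; only then does EGZ uniqueness on the smooth $\hat Y$ apply. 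You invoke the same two ingredients ($L^\infty$ bound, $L^{1+\epsilon}$ density) as providing ``the stability needed to extend the weak identity across the singular set,'' but without the resolution step the uniqueness claim is not directly applicable, so you should insert that device to close the argument.
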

\begin{proof}
Using the same arguments in \cite[Lemmas 2.7, 2.8]{ZyZz} (also see \cite{To10}), for any time sequence $t_j\to\infty$ we can find a subsequence, still denote by $t_j$, and a $\tilde\psi\in PSH(Y,\chi)\cap L^\infty(Y)$ such that $\varphi(t_j)\to f^*\tilde\psi$ in $L^1(X,\Omega)$-topology and $\tilde\psi$ satisfies \eqref{limit} on $Y\setminus S$, i.e. for any $K\subset\subset Y\setminus S$ and any given $\varrho\in C^\infty_0(K)$, there holds
\begin{equation}\label{gkeeq1}
\int_{Y}\varrho(\chi+\sqrt{-1}\partial\bar\partial\tilde\psi)^k=\int_{Y}\varrho Ge^{\tilde\psi}\chi^k.
\end{equation}
To see that $\tilde\psi$ satisfies \eqref{limit} on $S$, we need to recall some arguments in \cite{EGZ,ST12} on how to solve \eqref{limit} on $Y$. Firstly, we choose a resolution of singularities of $Y$:
$$\pi:\hat Y\to Y,$$
namely, $\hat Y$ is nonsingular, $\pi(\hat Y)=Y$ and $\pi:\hat Y\setminus \pi^{-1}(S)\to Y\setminus S$ is biholomorphic. Then $\pi^*\chi$ is a smooth semi-positive closed real $(1,1)$-form on $\hat Y$, which is big in the sense that $\int_{\hat Y}(\pi^*\chi)^k>0$. We pullback the equation \eqref{limit} to $\hat Y$:
\begin{equation}\label{limit1}
(\pi^*\chi+\sqrt{-1}\partial\bar\partial\hat\psi)^k=e^{\hat\psi}\pi^*G(\pi^*\chi)^k.
\end{equation}
Applying \cite[Theorem 4.1]{EGZ} or \cite[Theorem 3.2]{ST12} gives a unique $\hat \psi\in PSH(\hat Y,\pi^*\chi)\cap L^\infty(\hat Y)$ solving \eqref{limit1} on $\hat Y$. Then $\hat \psi$ is constant along every fiber of $\pi$ and hence decent to the unique bounded solution $\psi$ on $Y$ solving \eqref{limit} (see e.g. \cite[Theorem 6.3]{EGZ}).

\par Let's be back to our proof. Note that the pullback $\pi^*\tilde\psi$ on $\hat Y$ of $\tilde\psi$ obviously satisfies \eqref{limit1} on $\hat Y\setminus \pi^{-1}(S)$. Moreover, since $\pi^*\tilde\psi$ is a bounded function on $\hat Y$ and $\pi^{-1}(S)$ is a proper subvariety of $\hat Y$, we know $(\pi^*\chi+\sqrt{-1}\partial\bar\partial\pi^*\tilde\psi)^k$ takes no mass on $\pi^{-1}(S)$ (see e.g. \cite{K05}). On the other hand, using the fact that $\pi^*G\in L^{1+\epsilon}(\hat Y)$ and H\"{o}lder inequality, one easily sees that $e^{\pi^*\tilde\psi}\pi^*G(\pi^*\chi)^k$ also takes no mass on $\pi^{-1}(S)$. In conclusion, $\pi^*\tilde\psi$ is also a bounded solution to \eqref{limit1} on $\hat Y$. By uniqueness we have $\hat\psi=\pi^*\tilde\psi$ and hence $\psi=\tilde\psi$. Therefore, as $t\to\infty$, $\varphi(t)\to f^*\psi$ in $L^1(X,\Omega)$-topology without passing to a subsequence.
\par Moreover, for any $K\subset\subset X_{reg}$, by Lemma \ref{C2lem.4} we have a positive constant $C$ such that
$$\sup_{K\times[1,\infty)}|\Delta_{\omega_0}\varphi(t)|\le C.$$
Therefore, given the above $L^1$-convergence, we conclude $C^{1,\alpha}$-convergence by standard elliptic equation theory.

\par Theorem \ref{L1conv} is proved.
\end{proof}

\section{Uniform convergence away from singular fibers}\label{pf}
In this section, we will give a proof for second part of Theorem \ref{t1}, i.e. uniform convergence of metric away from singular fibers, by using the strategy developed in \cite{TWY}. To this end, we need more estimates. For convenience, we will use the following notation: \emph{$G(t),G_i(t),i=1,2,\ldots$, will always denote a positive function of $t$ which converge to $0$ as $t\to\infty$.}

\par Let's begin by the following
\begin{lem}\label{lemconv0}\cite[Lemma 4.3]{TWY}
There exist a constant $C\ge1$ and a positive function $G(t)$ with $G(t)\to0$ as $t\to\infty$ such that
$$\sup_Y e^{-Ce^{C\varsigma^{-C}}}|\varphi(t)-f^*\psi|\le G(t).$$
\end{lem}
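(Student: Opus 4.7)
My plan is to derive this uniform estimate by splitting $X$ into a neighborhood of the singular fibers and its complement, handling each region separately. I interpret $\sup_Y$ as $\sup_X$, since the quantities involved (in particular $\varsigma$) live on $X$. The strategy uses only the $C^{1,\alpha}_{\mathrm{loc}}(X_{\mathrm{reg}})$-convergence $\varphi(t)\to f^*\psi$ from Theorem \ref{L1conv}, together with the uniform $L^\infty$ bounds on $\varphi(t)$ (Lemma \ref{C0lem}) and on $\psi$ (by construction in Section \ref{limiting}).

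For each $\delta>0$, set $K_\delta:=\{x\in X:\varsigma(x)\ge\delta\}$. Since the smooth nonnegative function $\varsigma$ vanishes exactly on the singular fibers, $K_\delta$ is a compact subset of $X_{\mathrm{reg}}$. Hence Theorem \ref{L1conv} gives $\sup_{K_\delta}|\varphi(t)-f^*\psi|\to 0$ as $t\to\infty$. On the complementary set $\{\varsigma<\delta\}$, the uniform $L^\infty$ bounds yield
\[
|\varphi(t)-f^*\psi|\le M_0
\]
for a constant $M_0$ independent of $t$ and $\delta$, while the weight obeys
\[
e^{-Ce^{C\varsigma^{-C}}}\le e^{-Ce^{C\delta^{-C}}},
\]
which is double-exponentially small in $\delta^{-1}$. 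Given $\epsilon>0$, I would first choose $\delta$ so that $M_0\cdot e^{-Ce^{C\delta^{-C}}}<\epsilon/2$ and then $T=T(\delta)$ so that $\sup_{K_\delta}|\varphi(t)-f^*\psi|<\epsilon/2$ for all $t>T$. Together these produce a function $G(t)\to 0$ with the required property.

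I do not expect a serious technical obstacle, since the heavy lifting, namely upgrading the $L^1$-convergence to $C^{1,\alpha}_{\mathrm{loc}}$-convergence via the $C^2$-estimate of Lemma \ref{C2lem.4}, has already been carried out in Section \ref{est section}. The only subtle point is checking that the double-exponential weight indeed absorbs the bounded singular-fiber contribution, which is immediate from its very rapid decay. If one preferred to avoid Theorem \ref{L1conv} and argue more directly, as in [TWY, Lemma 4.3], one could instead run a maximum principle argument on $\varphi(t)-f^*\psi$ perturbed by a barrier of the form $\epsilon(t)e^{C_0\varsigma^{-C_0}}$ and compare the resulting Monge-Amp\`ere inequality with the limit equation \eqref{limit}; but the splitting argument above seems to give the conclusion more cleanly.
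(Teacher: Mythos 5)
Your argument is correct and complete. You rightly read $\sup_Y$ as $\sup_X$ (the quantities $\varphi(t)$, $f^*\psi$, $\varsigma$ all live on $X$, so this is evidently a typo), and the splitting $X = K_\delta \cup \{\varsigma < \delta\}$ with $K_\delta := \{\varsigma \ge \delta\}$ does the job: $K_\delta$ is closed in the compact $X$ and disjoint from $f^{-1}(S)$, hence $K_\delta \subset\subset X_{\mathrm{reg}}$, so Theorem \ref{L1conv} gives $\sup_{K_\delta}|\varphi(t)-f^*\psi|\to 0$; on $\{\varsigma<\delta\}$ the uniform $L^\infty$ bounds (Lemma \ref{C0lem} for $\varphi(t)$, $\psi\in L^\infty(Y)$ from Section \ref{limiting}) together with the monotone decay of $e^{-Ce^{C\varsigma^{-C}}}$ as $\varsigma\to 0$ make the weighted quantity as small as you like, uniformly in $t$. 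Defining $G(t):=\sup_X e^{-Ce^{C\varsigma^{-C}}}|\varphi(t)-f^*\psi|$, your $\epsilon$--$\delta$--$T$ argument exactly shows $G(t)\to 0$. Two observations worth making explicit: (i) your argument actually works for \emph{any} $C\ge 1$ and indeed for any weight vanishing on the singular fibers, a slightly stronger statement than claimed; (ii) the only input needed beyond the $L^\infty$ bounds is the $C^0$-part of the locally uniform convergence in Theorem \ref{L1conv}, so the $C^{1,\alpha}$ regularity you invoke is overkill here.

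The paper itself does not reproduce a proof but simply refers to \cite[Lemma 4.3]{TWY}. Your proof leverages the already-established Theorem \ref{L1conv} and reduces the lemma to elementary compactness, which is the natural route in the present paper's logical ordering; the barrier/maximum-principle alternative you sketch at the end would also work but is more machinery than is needed once Theorem \ref{L1conv} is in hand. Either way, the argument you give stands on its own and proves the stated estimate.
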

\begin{proof}
For a proof, see \cite[Lemma 4.3]{TWY}.
\end{proof}

\begin{lem}\label{lemconv1}
There exist a constant $C\ge1$ and a positive function $G(t)$ with $G(t)\to0$ as $t\to\infty$ such that
$$\sup_Y e^{-Ce^{C\varsigma^{-C}}}|\dot\varphi(t)|\le G(t).$$
\end{lem}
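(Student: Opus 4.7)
The plan is to differentiate the Monge--Amp\`ere equation \eqref{lt1} in $t$ to extract an elliptic equation for $\dot\varphi$, and then run a weighted maximum--principle argument modelled on that of Lemma \ref{C2lem.3}, with the $C^0$ smallness of $\varphi - f^*\psi$ from Lemma \ref{lemconv0} supplying the decay factor $G(t)$.

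Concretely, the first step would be to take the logarithm of \eqref{lt1} and differentiate in $t$. Using $\dot\omega_t = e^{-t}(f^*\chi - \omega_0)$, then substituting the trace identity $n = e^{-t}tr_{\omega(t)}\omega_0 + (1-e^{-t})tr_{\omega(t)}f^*\chi + \Delta_{\omega(t)}\varphi$ (obtained by taking $tr_{\omega(t)}$ of $\omega(t) = \omega_t + \sqrt{-1}\partial\bar\partial\varphi$) to eliminate $tr_{\omega(t)}\omega_0$, and finally using the pullback relation $\Delta_{\omega(t)}f^*\psi = tr_{\omega(t)}f^*\omega_Y - tr_{\omega(t)}f^*\chi$, one arrives after cancellations at
\[
\Delta_{\omega(t)}(\dot\varphi + \varphi - f^*\psi)
= \frac{\dot\varphi}{1-e^{-t}} + k - tr_{\omega(t)}f^*\omega_Y - \frac{\varphi\,e^{-t}}{(1-e^{-t})^2}.
\]

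Following Lemma \ref{C2lem.3}, the plan is then to introduce the auxiliary function
\[
H_\pm := e^{-C_1\varsigma^{-C_1}}\bigl(\pm\dot\varphi + \varphi - f^*\psi\bigr),
\]
with $C_1$ chosen larger than the exponents appearing in Lemmas \ref{C2lem}--\ref{C2lem.4}, to expand $\Delta_{\omega(t)}H_\pm$ by the product rule in the style of \eqref{a0}--\eqref{d}, and to absorb the cross term by Cauchy--Schwarz using the bounds $|\Delta_{\omega(t)}(e^{-C_1\varsigma^{-C_1}})|\le Ce^{-C_1\varsigma^{-C_1}}\varsigma^{-(C_1+2)}$ and $|\nabla\varsigma|_{\omega(t)}^2 \le Ce^{-2C_1\varsigma^{-C_1}}\varsigma^{-(2C_1+2)}$ recalled there. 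The supremum of $H_\pm$ is then attained either on $f^{-1}(S)$, where the weight vanishes, or at an interior point $x_t \in X_{reg}$; at such an interior maximum the resulting differential inequality forces
\[
\pm\dot\varphi(x_t,t) \le C\,e^{C_1\varsigma^{-C_1}}\bigl(|\varphi(x_t,t) - f^*\psi(x_t)| + e^{-t}\bigr),
\]
and plugging in Lemma \ref{lemconv0} converts the right--hand side into $G(t)\,e^{Ce^{C\varsigma^{-C}}}$, which is exactly the asserted bound once a symmetric argument handles $-\dot\varphi$.

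The main technical obstacle will be the term $-tr_{\omega(t)}f^*\omega_Y$ in the elliptic equation above: on any compact subset of $X_{reg}$ this trace is uniformly bounded, but it may blow up along $f^{-1}(S)$ at a rate of order $\varsigma^{-C}$, controlled only implicitly through Lemma \ref{C2lem.4} combined with the smoothness of $\omega_Y$ on $Y\setminus S$. Choosing $C_1$ large enough that $e^{-C_1\varsigma^{-C_1}}tr_{\omega(t)}f^*\omega_Y$ remains uniformly bounded, while simultaneously balancing the gradient error from the product rule against the principal term $e^{-C_1\varsigma^{-C_1}}\dot\varphi/(1-e^{-t})$ in the style of \eqref{b}--\eqref{c}, is the essential balancing act; the constant $+k$ is absorbed in the same way, contributing to the $e^{Ce^{C\varsigma^{-C}}}$--weight that appears in the final bound.
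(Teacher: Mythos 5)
Your elliptic identity
\[
\Delta_{\omega(t)}(\dot\varphi + \varphi - f^*\psi)
= \frac{\dot\varphi}{1-e^{-t}} + k - tr_{\omega(t)}f^*\omega_Y - \frac{e^{-t}\varphi}{(1-e^{-t})^2}
\]
is correct (it is exactly \eqref{b2} rewritten after subtracting $\Delta_{\omega(t)}f^*\psi = tr_{\omega(t)}f^*\omega_Y - tr_{\omega(t)}f^*\chi$). But the maximum-principle step that follows does not produce the decay you claim. At a maximum of $H_+$, after the usual weight manipulations you obtain an inequality of the shape
\[
\frac{\dot\varphi(x_t,t)}{1-e^{-t}} \;\le\; tr_{\omega(t)}f^*\omega_Y(x_t) - k \;+\; O\!\left(\frac{e^{-t}\varphi}{(1-e^{-t})^2}\right) + \text{weight errors}.
\]
You correctly note that $e^{-C_1\varsigma^{-C_1}} tr_{\omega(t)}f^*\omega_Y$ can be made \emph{uniformly bounded} by taking $C_1$ large; but bounded is not small. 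The quantity $tr_{\omega(t)}f^*\omega_Y - k$ has no reason, at this stage of the argument, to tend to zero — and in fact its weighted smallness is exactly the content of Lemma \ref{lemconv2}, whose proof in the paper uses Lemma \ref{lemconv1}. So your route is circular: you would need Lemma \ref{lemconv2} to extract a $G(t)$ from the right-hand side, and Lemma \ref{lemconv2} needs Lemma \ref{lemconv1}. What the direct maximum principle gives you here is only the uniform bound $\sup|\dot\varphi|\le C$ (which the paper also derives, via \eqref{b2}), not the assertion that the weighted $\sup$ tends to zero.

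The paper gets around this with a genuinely different mechanism. It applies the maximum principle not to $\dot\varphi$ but to the combination $(1-e^{-t})\ddot\varphi + 2e^{-t}\dot\varphi - (1-3e^{-t})\varphi$ arising from the second time-differentiation \eqref{b3}; the key is that in that identity the "bad" curvature term appears with the favorable sign $(1-e^{-t})|\dot\omega(t)|^2_{\omega(t)}\ge 0$, and $tr_{\omega(t)}f^*\chi\ge 0$ also enters with the good sign, so one gets the clean uniform bound $\ddot\varphi \le C$ with no decay required (see \eqref{b5}). Then one never tries to show $\dot\varphi\to 0$ directly by an elliptic argument; instead one converts convergence of $\varphi$ into decay of $\dot\varphi$ by the integration trick of \cite[Lemma 4.6]{TWY}: if $E\dot\varphi\ge\delta_0$ at some point and time, the one-sided bound $\partial_t(E\dot\varphi)=E\ddot\varphi\le C$ keeps $E\dot\varphi\ge\delta_0/2$ on a fixed-length interval, so integrating forces $E(\varphi - f^*\psi)$ to increase by a definite amount, contradicting Lemma \ref{lemconv0}. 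This Dini-type step is the missing idea in your proposal: you need a uniform upper bound on $\ddot\varphi$ and the $C^0$ convergence of $\varphi$, not a pointwise elliptic bound on $\dot\varphi$.
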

\begin{proof}
We choose a sufficiently large $C$ such that $E=e^{-Ce^{C\varsigma^{-C}}}$ satisfies Lemma \ref{lemconv0}.
We begin by collecting two useful identities as follows, which can be checked by direct computations. Firstly, \eqref{lt1} can be rewritten as follows:
\begin{equation}\label{b1}
\varphi(t)=(1-e^{-t})\log\frac{e^{(n-k)t}\omega(t)^n}{\Omega}.
\end{equation}
Now by taking time derivative of \eqref{b1} we have
\begin{equation}\label{b2}
(1-e^{-t})\Delta_{\omega(t)}(\varphi(t)+\dot\varphi(t))=\dot\varphi(t)-(1-e^{-t})(tr_{\omega(t)}f^*\chi-k)-e^{-t}\log\frac{e^{(n-k)t}\omega(t)^n}{\Omega}
\end{equation}
and
\begin{align}\label{b3}
&\Delta_{\omega(t)}((1-e^{-t})\ddot\varphi(t)+2e^{-t}\dot\varphi(t)-(1-3e^{-t})\varphi(t))\nonumber\\
&=\ddot\varphi(t)+e^{-t}\log\frac{e^{(n-k)t}\omega(t)^n}{\Omega}+(1-3e^{-t})tr_{\omega(t)}f^*\chi+(n+2k)e^{-t}+(1-e^{-t})|\dot\omega(t)|_{\omega(t)}^2-n.
\end{align}
By applying the maximum principle in \eqref{b2} and using Lemmas \ref{C0lem}, \ref{C0lem'} and \ref{C2lem} one finds that
\begin{equation}\label{b4}
\sup_{X\times[1,\infty)}|\dot\varphi(t)|\le C.
\end{equation}
Similarly, we apply the maximum principle in \eqref{b3} and see that
\begin{equation}\label{b5}
\sup_{X\times[1,\infty)}\ddot\varphi(t)\le C.
\end{equation}
Indeed, for any $t\in[1,\infty)$, at a maximal point $x_t$ of $(1-e^{-t})\ddot\varphi(t)+2e^{-t}\dot\varphi(t)-(1-3e^{-t})\varphi(t)$, by \eqref{b3} we have
$$\ddot\varphi(t)(x_t)\le C$$
for some uniform constant $C\ge1$. But $2e^{-t}\dot\varphi(t)-(1-3e^{-t})\varphi(t)$ is uniformly bounded by Lemma \ref{C0lem} and \eqref{b5}. Therefore,
$$((1-e^{-t})\ddot\varphi(t)+2e^{-t}\dot\varphi(t)-(1-3e^{-t})\varphi(t))(x_t)\le C$$
for some uniform constant $C\ge1$ and, using again that $2e^{-t}\dot\varphi(t)-(1-3e^{-t})\varphi(t)$ is uniformly bounded, \eqref{b5} is checked. Now we can complete the proof by an easy argument (see \cite[Lemma 4.6]{TWY}). Assume this lemma fails, then we can find a constant $\delta_0>0$, and sequences $t_j\to\infty$, $x_j\in X$ such that
$$E(x_j)\dot\varphi(t_j)(x_j)\ge\delta_0,$$
which in particular implies that $x_j\in X_{reg}$. On the other hand, by \eqref{b5} we see
$$\partial_t(E\dot\varphi(t))=E\ddot\varphi(t)\le C.$$
So $E(x_j)\dot\varphi(t,x_j)\ge\frac{\delta_0}{2}$ on $t\in[t_j,t_j+\frac{\delta_0}{2C}]$ and hence, by integrating in $t$,
$$E(x_j)(\varphi-f^*\psi)(t_j+\frac{\delta_0}{C},x_j)\ge E(x_j)(\varphi-f^*\psi)(t_j,x_j)+\frac{\delta_0^2}{2C},$$
which implies
\begin{equation}\label{b6}
G(t_j+\frac{\delta_0}{C})\ge G(t_j)+\frac{\delta_0^2}{2C},
\end{equation}
where $G(t)$ is the function in Lemma \ref{lemconv0}. As $t_j\to\infty$, \eqref{b6} is impossible to hold. Therefore,
$$\sup_XE\dot\varphi(t)\le G_1(t)$$
for some positive function $G_1(t)$ with $G_1(t)\to0$ as $t\to\infty$. Similarly, we get
$$\inf_XE\dot\varphi(t)\ge -G_1(t).$$
Lemma \ref{lemconv1} is proved.
\end{proof}

Combining Lemmas \ref{lemconv0} and \ref{lemconv1}, we conclude
\begin{lem}\label{lemconv}
There exist a constant $C\ge1$ and a positive function $G(t)$ with $G(t)\to0$ as $t\to\infty$ such that
$$\sup_Y e^{-Ce^{C\varsigma^{-C}}}|\varphi(t)+\dot\varphi(t)-f^*\psi|\le G(t).$$
\end{lem}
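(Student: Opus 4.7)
The statement is an immediate consequence of the two preceding lemmas via the triangle inequality, so the plan is short. Let $C_0$ and $C_1$ be the constants provided by Lemma \ref{lemconv0} and Lemma \ref{lemconv1} respectively, and let $G_0(t), G_1(t)\to 0$ be the corresponding error functions. First I would pick $C:=\max\{C_0,C_1\}$; because $\varsigma\le 1$, the weight $E:=e^{-Ce^{C\varsigma^{-C}}}$ is pointwise no larger than either $e^{-C_0e^{C_0\varsigma^{-C_0}}}$ or $e^{-C_1e^{C_1\varsigma^{-C_1}}}$, so both Lemmas \ref{lemconv0} and \ref{lemconv1} remain valid with this common weight $E$, possibly with enlarged error functions (still tending to $0$).

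Next, the triangle inequality gives pointwise on $Y$
\begin{equation*}
|\varphi(t)+\dot\varphi(t)-f^*\psi|\le |\varphi(t)-f^*\psi|+|\dot\varphi(t)|,
\end{equation*}
and multiplying by the nonnegative weight $E$ and taking the supremum over $Y$ yields
\begin{equation*}
\sup_Y E\,|\varphi(t)+\dot\varphi(t)-f^*\psi|\le \sup_Y E\,|\varphi(t)-f^*\psi|+\sup_Y E\,|\dot\varphi(t)|\le G_0(t)+G_1(t).
\end{equation*}
Setting $G(t):=G_0(t)+G_1(t)$, which still satisfies $G(t)\to 0$ as $t\to\infty$, gives the desired bound.

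There is no real obstacle here: the only small bookkeeping point is that Lemmas \ref{lemconv0} and \ref{lemconv1} are stated with their own constants $C$, so one must verify that a common $C$ (and hence a common weight $E$) can be used, which follows simply from $\varsigma\le 1$ making the weight monotone in $C$. Consequently the lemma is a direct corollary and no further analysis is needed.
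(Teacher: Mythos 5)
Your proof is correct and is exactly what the paper intends: the paper simply says Lemma \ref{lemconv} follows by ``Combining Lemmas \ref{lemconv0} and \ref{lemconv1}'', which is precisely the triangle-inequality-plus-common-weight argument you spell out, and your observation that $\varsigma\le1$ makes the weight $e^{-Ce^{C\varsigma^{-C}}}$ monotone decreasing in $C$ correctly justifies taking $C=\max\{C_0,C_1\}$.
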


\begin{lem}\label{lemconv2}
Let $G(t)$ be the same function as in Lemma \ref{lemconv}. There exists a constant $C\ge1$ such that
$$\sup_Y e^{-Ce^{C\varsigma^{-C}}}(tr_{\omega(t)}f^*\omega_{Y}-k)\le C\sqrt{G(t)}.$$
\end{lem}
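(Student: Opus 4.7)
The plan is to derive an approximate PDE relating $tr_{\omega(t)}f^*\omega_Y - k$ to the Laplacian of $u := \varphi(t) + \dot\varphi(t) - f^*\psi$, then apply the maximum principle to an auxiliary function to convert the pointwise smallness of $u$ (from Lemma \ref{lemconv}) into pointwise smallness of $tr_{\omega(t)}f^*\omega_Y - k$. Starting from \eqref{b2} and using the decomposition $f^*\omega_Y = f^*\chi + \sqrt{-1}\partial\bar\partial f^*\psi$ together with $\Delta_{\omega(t)} f^*\psi = tr_{\omega(t)} f^*\omega_Y - tr_{\omega(t)} f^*\chi$, I expect to obtain the identity
\begin{equation*}
(1-e^{-t})(tr_{\omega(t)} f^*\omega_Y - k) = \dot\varphi - e^{-t}\log\tfrac{e^{(n-k)t}\omega(t)^n}{\Omega} - (1-e^{-t})\Delta_{\omega(t)} u.
\end{equation*}

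By Lemmas \ref{lemconv1} and \ref{lemconv}, both $E|\dot\varphi|$ and $E|u|$ are bounded by $G(t)$, where $E := e^{-Ce^{C\varsigma^{-C}}}$. The difficulty is that pointwise smallness of $u$ does not imply pointwise smallness of $\Delta_{\omega(t)} u$, so we cannot directly read off the conclusion. To circumvent this, I would apply the maximum principle to an auxiliary function of the form
\begin{equation*}
\Phi := E\bigl(tr_{\omega(t)} f^*\omega_Y - k\bigr) - A(Eu)
\end{equation*}
for a large constant $A > 0$ to be optimized. Both $E(tr-k)$ and $Eu$ are uniformly bounded on $X$, so $\Phi$ attains its supremum at some $p_0 \in X$; a cutoff argument shows $p_0 \in X_{reg}$.

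At $p_0$ the plan is to combine three ingredients: (i) a Schwarz-lemma type estimate giving $\Delta_{\omega(t)}tr_{\omega(t)}f^*\omega_Y \ge -C(tr_{\omega(t)}f^*\omega_Y)^2 \ge -Ce^{Ce^{C\varsigma^{-C}(p_0)}}$ (using Lemma \ref{C2lem.3} and the bounded curvature of $\omega_Y$ away from $S$); (ii) the approximate identity $\Delta_{\omega(t)} u = \dot\varphi - (tr - k) + O(e^{-t})$; and (iii) the inequality $\Delta_{\omega(t)}\Phi(p_0) \le 0$ together with $\nabla\Phi(p_0) = 0$, which produces the gradient and Laplacian contributions from $E$ that must be absorbed as in the proof of Lemma \ref{C2lem.3}. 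After this absorption, one expects a bound of the form
\begin{equation*}
E(p_0)(tr(p_0) - k) \le \tfrac{C}{A} + O(G(t)) + O(e^{-t}).
\end{equation*}
The maximum property of $\Phi$ and the uniform bound $|Eu| \le G(t)$ then propagate this to $\sup_X E(tr-k) \le C/A + 2AG(t) + G_1(t)$, and optimizing by taking $A \sim 1/\sqrt{G(t)}$ yields the conclusion.

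The main obstacle will be the technical accounting for the singular cutoff $E$ in the maximum principle: both $\Delta E$ and $|\nabla E|^2/E$ blow up near singular fibers, and balancing them against the Schwarz-lemma term and the PDE term $E\Delta u$ must be executed with the same care as in Lemma \ref{C2lem.3} (using $|\nabla\varsigma|^2_{\omega(t)} \le Ce^{-2C_1\varsigma^{-C_1}}\varsigma^{-2C_1-2}$ and similar estimates). The factor $\sqrt{G(t)}$ in the conclusion is not an artifact but arises naturally from balancing the linear-in-$A$ loss from the max property against the $1/A$ gain from the Schwarz-lemma step.
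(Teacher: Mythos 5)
Your proposal is correct and follows essentially the same route as the paper: the paper's auxiliary function $H_1 = E_1(tr_{\omega(t)}f^*\omega_Y - k) - \frac{E_1(\dot\varphi+\varphi-f^*\psi)}{\sqrt{G(t)}}$ is exactly your $\Phi$ with the optimized choice $A = 1/\sqrt{G(t)}$ already substituted, and the paper uses the same Schwarz-lemma estimate and the same identity relating $\Delta_{\omega(t)}u$ to $tr_{\omega(t)}f^*\omega_Y - k$. The one technical detail you flag but do not resolve is that the paper introduces a \emph{faster-decaying} cutoff $E_1 = e^{-C_1 e^{C_1\varsigma^{-C_1}}}$ (with $C_1$ large) chosen so that $|\partial E_1|^2_{\omega(t)}/E_1$ and $|\Delta_{\omega(t)}E_1|$ are bounded by $CE$, which is precisely how the cutoff error terms get absorbed against the $G(t)$-smallness of $E u$.
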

\begin{proof}
For convenience, we present a proof by following \cite[Lemma 4.7]{TWY}. Choose $E=e^{-Ce^{C\varsigma^{-C}}}$ satisfies Lemma \ref{lemconv}. We also choose a sufficiently large constant $C_1\ge$ such that $E_1=e^{-C_1e^{C_1\varsigma^{-C_1}}}$ satisfies $\frac{|\partial E_1|_{\omega(t)}^2}{E_1}\le CE$ and $|\Delta_{\omega(t)} E_1|\le CE$. On the one hand, we have
\begin{align}
&\Delta_{\omega(t)}(E_1(\dot\varphi(t)+\varphi(t)-f^*\psi))\nonumber\\
&=E_1\Delta_{\omega(t)}(\dot\varphi(t)+\varphi(t)-f^*\psi)+(\dot\varphi(t)+\varphi(t)-f^*\psi)\Delta_{\omega(t)}E_1+2Re\langle\partial E_1,\bar\partial(\dot\varphi(t)+\varphi(t)-f^*\psi)\rangle_{\omega(t)}\nonumber\\
&\le CG(t)+CE_1(-tr_{\omega(t)}f^*\omega_{Y}+k+\dot\varphi+e^{-t})+2Re\langle\partial E_1,\bar\partial(\dot\varphi(t)+\varphi(t)-f^*\psi)\rangle_{\omega(t)}\nonumber\\
&\le -CE_1(tr_{\omega(t)}f^*\omega_{Y}-k)+CG(t)+2Re\langle\partial E_1,\bar\partial(\dot\varphi(t)+\varphi(t)-f^*\psi)\rangle_{\omega(t)}\nonumber,
\end{align}
where we have used \eqref{b2} and Lemma \ref{lemconv1} and assumed without loss of generality that $e^{-t}\le CG(t)$ on $[1,\infty)$. Hence,
\begin{align}\label{b7}
&\Delta_{\omega(t)}\left(\frac{E_1(\dot\varphi(t)+\varphi(t)-f^*\psi)}{\sqrt{G(t)}}\right)\nonumber\\
&\le-C\frac{E_1(tr_{\omega(t)}f^*\omega_{Y}-k)}{\sqrt{G(t)}}+C\sqrt{G(t)}+2Re\langle\partial E_1,\frac{\bar\partial(\dot\varphi(t)+\varphi(t)-f^*\psi)}{\sqrt{G(t)}}\rangle_{\omega(t)}.
\end{align}
On the other hand, using \cite[Section 3]{ST12}, we know $\omega_{Y}\le C\varsigma^{-C}\chi$ on $Y\setminus S$ and the bisectional curvature of $\omega_{Y}$ on $Y\setminus S$ has an upper bound of the form $\varsigma^{-C'}$ for some constant $C'\ge1$, so by a Schwarz lemma argument we see
\begin{equation}
\Delta_{\omega(t)}tr_{\omega(t)}f^*\omega_{Y}\ge -C'tr_{\omega(t)}f^*\omega_{Y}-C'\varsigma^{-C'}(tr_{\omega(t)}f^*\omega_{Y})^2\ge-C\varsigma^{-C}
\end{equation}
Therefore, we have
\begin{align}\label{b8}
&\Delta_{\omega(t)}(E_1(tr_{\omega(t)}f^*\omega_{Y}-k))\nonumber\\
&=(tr_{\omega(t)}f^*\omega_{Y}-k)\Delta_{\omega(t)}E_1+E_1\Delta_{\omega(t)}(tr_{\omega(t)}f^*\omega_{Y}-k)+2Re\langle\partial E_1,\bar\partial(tr_{\omega(t)}f^*\omega_{Y}-k)\rangle\nonumber\\
&\ge-C E_1\varsigma^{-C}+2Re\langle\partial E_1,\bar\partial(tr_{\omega(t)}f^*\omega_{Y}-k)\rangle\nonumber\\
&\ge2Re\langle\partial E_1,\bar\partial(tr_{\omega(t)}f^*\omega_{Y}-k)\rangle-C.
\end{align}
Now we set $H_1=E_1(tr_{\omega(t)}f^*\omega_{Y}-k)-\frac{E_1(\dot\varphi(t)+\varphi(t)-f^*\psi)}{\sqrt{G(t)}}$. Note that $H_1$ is uniformly bounded and we want to show that it converges to $0$ uniformly by applying the maximum principle. By \eqref{b7} and \eqref{b8} we see that
\begin{align}\label{b9}
\Delta_{\omega(t)}H_1&\ge C\frac{E_1(tr_{\omega(t)}f^*\omega_{Y}-k)}{\sqrt{G(t)}}-C\sqrt{G(t)}-C\nonumber\\
&-2Re\langle\partial E_1,\frac{\bar\partial(\dot\varphi(t)+\varphi(t)-f^*\psi)}{\sqrt{G(t)}}\rangle_{\omega(t)}+2Re\langle\partial E_1,\bar\partial(tr_{\omega(t)}f^*\omega_{Y}-k)\rangle_{\omega(t)}\nonumber\\
&=C\frac{E_1(tr_{\omega(t)}f^*\omega_{Y}-k)}{\sqrt{G(t)}}-C+2Re\langle\partial E_1,\bar\partial\left(\frac{H_1}{E_1}\right)\rangle_{\omega(t)}\nonumber\\
&=C\frac{E_1(tr_{\omega(t)}f^*\omega_{Y}-k)}{\sqrt{G(t)}}-C+\frac{2}{E_1}Re\langle\partial E_1,\bar\partial H_1\rangle_{\omega(t)}-\frac{2H_1}{E_1^2}|\partial E_1|_{\omega(t)}^2\nonumber\\
&\ge C\frac{E_1(tr_{\omega(t)}f^*\omega_{Y}-k)}{\sqrt{G(t)}}+\frac{2}{E_1}Re\langle\partial E_1,\bar\partial H_1\rangle_{\omega(t)}-C
\end{align}
Then by applying the maximum principle to \eqref{b9}, we easily complete the proof of this lemma.
\par Lemma \ref{lemconv2} is proved.
\end{proof}

\begin{lem}\label{lemconv3}
For any $K\subset Y\setminus S$, there exists a constant $C=C_K\ge1$ such that for all $y\in K$ we have
\begin{equation}\label{c1}
|e^t\omega(t)|_{X_y}|_{C^1(X_y,\omega_{0,y})}\le C.
\end{equation}
\end{lem}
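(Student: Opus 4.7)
The plan is to perform a fibrewise Calabi-Yau third-order estimate on each smooth fibre $X_y$, adapted to the collapsing regime. First I would reduce to a fibre Monge-Ampère problem: since $\varsigma$ is bounded away from $0$ on $f^{-1}(K)$, all previous estimates become uniform there. Writing $\psi(t):=e^t(\varphi(t)-\bar\varphi_y(t))$ and noting that $\omega_t|_{X_y}=e^{-t}\omega_{0,y}$, one has $\tilde\omega_y(t):=e^t\omega(t)|_{X_y}=\omega_{0,y}+\sqrt{-1}\partial_f\bar\partial_f\psi(t)|_{X_y}$, with $\|\psi\|_{C^0(X_y)}\le C$ and $C^{-1}\omega_{0,y}\le\tilde\omega_y(t)\le C\omega_{0,y}$ uniformly on $K$ by Lemmas \ref{C0lem.1} and \ref{C2lem.4}; moreover the fibre Monge-Ampère equation \eqref{fib} reads $\tilde\omega_y(t)^{n-k}=F_y(t)\omega_{0,y}^{n-k}$ with $F_y$ uniformly bounded above and away from zero on $K$.

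The core of the argument is then Yau's classical third-order estimate applied fibrewise. Letting $\nabla^{0}$ denote the Chern connection of $\omega_{0,y}$ and
\[
S:=\tilde g^{i\bar q}\tilde g^{j\bar s}\tilde g^{k\bar r}\bigl(\nabla^{0}_k\tilde g_{i\bar s}\bigr)\overline{\bigl(\nabla^{0}_r\tilde g_{j\bar q}\bigr)},
\]
the Yau-Calabi computation on the fibre, which uses only the fibre Monge-Ampère equation, the uniform $C^0$-equivalence of $\tilde g$ with $g_{0,y}$, and a uniform fibre $C^1$-bound on $\log F_y$, yields $\Delta_{\tilde g}S\ge -C_1 S-C_2$ on $X_y$ with uniform constants on $K$. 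Combining this with the Schwarz-lemma bound $\Delta_{\tilde g}(tr_{\tilde g}\omega_{0,y})\ge c\,S-C$ and considering $H:=S+A\,tr_{\tilde g}\omega_{0,y}$ for $A$ large enough, the maximum principle on the compact fibre $X_y$, together with the uniform bound on $tr_{\tilde g}\omega_{0,y}$ from Lemma \ref{C2lem.4}, gives $S\le C$ uniformly on $K$, which is precisely \eqref{c1}.

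The only missing ingredient is the uniform fibre $C^1$-bound on $\log F_y$, which requires care since $F_y$ involves the global metric $\omega(t)$. Using \eqref{lt1} one rewrites $\log F_y$ as $\varphi(t)/(1-e^{-t})$ plus the logarithmic ratio of $\omega(t)^n$ and $\omega(t)^{n-k}\wedge f^*\chi^k$, plus a smooth $t$-independent piece. For the first summand, the Gagliardo-Nirenberg interpolation
\[
\|\partial_f\varphi\|_{C^0(X_y)}^2\le C\|\varphi-\bar\varphi_y\|_{C^0(X_y)}\,\|\partial_f\bar\partial_f\varphi\|_{C^0(X_y,\omega_{0,y})},
\]
combined with Lemma \ref{C0lem.1} (giving $\|\varphi-\bar\varphi_y\|_{C^0(X_y)}\le Ce^{-t}$) and the fibrewise $C^{1,1}$-bound $\|\partial_f\bar\partial_f\varphi\|_{\omega_{0,y}}\le Ce^{-t}$ extracted from Lemma \ref{C2lem.4}, yields $|\partial_f\varphi|_{\omega_{0,y}}\le Ce^{-t}$, which exactly cancels the $(1-e^{-t})^{-1}$ singularity. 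For the logarithmic ratio, since $f^*\chi$ is fibre-constant, its fibre derivative reduces to a term involving $\partial_f\omega(t)=\sqrt{-1}\partial_f(\partial\bar\partial\varphi)$, which enjoys the required decay by Lemma \ref{C2lem.4} and the same interpolation.

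The main technical obstacle is precisely this apparent circularity in bounding $\log F_y$: fibre derivatives of $\omega(t)$ are needed to control $F_y$, yet fibre $C^1$-bounds on $\omega(t)$ are exactly what we are trying to establish. The loop is closed by the matching $O(e^{-t})$-decay of $\varphi-\bar\varphi_y$ and of $\partial_f\bar\partial_f\varphi$ from Lemmas \ref{C0lem.1} and \ref{C2lem.4}, which cancels the $(1-e^{-t})^{-1}$ blowup in the Monge-Ampère equation \eqref{lt1} as $t\to\infty$.
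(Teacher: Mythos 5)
Your plan contains a genuine gap, and the circularity you flag at the end is not actually resolved. Let me be concrete. The density in the fibre Monge--Amp\`ere equation is
\[
F_y(t)=e^{(n-k)t}\,\frac{\omega(t)^{n-k}\wedge f^*\chi^k}{\omega_0^{n-k}\wedge f^*\chi^k},
\]
and, although you rewrite $\log F_y$ as a sum containing $\varphi/(1-e^{-t})$ and the ratio $\log\bigl(\omega(t)^{n-k}\wedge f^*\chi^k/\omega(t)^n\bigr)$, the second summand is still there and its fibre derivative $\partial_f$ lands on the horizontal and mixed components of $\omega(t)$ pointwise, i.e.\ on first fibre-derivatives of $g_{i\bar j}(t)=g_{0,i\bar j}+\partial_i\partial_{\bar j}\varphi+\cdots$, which are \emph{third} derivatives of $\varphi$. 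None of Lemmas \ref{C0lem}--\ref{C2lem.4} control these. Your fix---``the same interpolation''---does not apply: to interpolate $\partial_f(\partial\bar\partial\varphi)$ between $\partial\bar\partial\varphi$ (bounded) and $\partial_f\bar\partial_f\partial\bar\partial\varphi$ you would need a \emph{fourth}-order bound on $\varphi$, which is again unavailable and in fact strictly worse than the target estimate. The cancellation you do carry out (matching the $O(e^{-t})$ decay of $\varphi-\bar\varphi_y$ and of $\partial_f\bar\partial_f\varphi$ against the $(1-e^{-t})^{-1}$ factor) takes care only of the $\varphi/(1-e^{-t})$ piece of $\log F_y$; the ratio piece remains uncontrolled, and with it the fibre Ricci curvature $\mathrm{Ric}(\tilde\omega_y)=\mathrm{Ric}(\omega_{0,y})-\sqrt{-1}\partial_f\bar\partial_f\log F_y$ that a fibrewise Yau third-order estimate needs as input.

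This is exactly why the paper does not work fibrewise. It instead rescales the base direction via $F_t(w,y)=(w,ye^{-t/2})$, shows $\tilde\omega(t):=e^tF_t^*\omega(t)$ is uniformly equivalent to the Euclidean metric on compact subsets of $D\times\mathbb{C}^k$, and then runs a Sherman--Weinkove-type local Calabi ($C^1$) estimate on the \emph{full} metric $\tilde\omega(t)$, not merely its restriction to a fibre. The decisive point is that the Ricci curvature of $\tilde\omega(t)$ is explicitly $\frac{1}{1-e^{-t}}F_t^*\omega_0-\frac{e^{-t}}{1-e^{-t}}\tilde\omega(t)$, so the reference data entering the Calabi computation is the fixed smooth form $F_t^*\omega_0$ (converging to $\eta$), and its derivatives are controlled without any reference to derivatives of $\omega(t)$. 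Only after establishing the bound $S\le C$ for the global quantity $S=|\nabla^E\tilde\omega|^2_{\tilde\omega}$ does the paper restrict to $X_y$ to obtain \eqref{c1}. If you want to salvage a fibrewise argument, you would have to feed in the missing $C^1$-control of $\log F_y$ from somewhere else; as written, your proposal assumes what it is trying to prove.
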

\begin{proof}
We will make use of some arguments in \cite[Theorem 1.1]{ToZy15}. For any given $y\in K$ and $x\in X_y$, we choose a local chart $(U,w^1,\ldots,w^{n-k},y^1,\ldots,y^k)$ on $X$ centered at $x$ and local chart $(V,y^1,\ldots,y^k)$ on $Y$ centered at $y$ such that in these local charts $f$ is given by $(w^1,\ldots,w^{n-k},y^1,\ldots,y^k)\mapsto(y^1,\ldots,y^k)$. We also assume that $U=\{(w^1,\ldots,w^{n-k},y^1,\ldots,y^k)\in\mathbb{C}^n||w^i|<1,|y^\alpha|<1,i=1,\ldots,n-k,\alpha=1,\ldots,k\}$ be the polydisc in $\mathbb{C}^n$. For each $t\ge0$, let $B_{e^{\frac{t}{2}}}\subset\mathbb{C}^k$ be the polydisc in $\mathbb{C}^n$ centered at $0$ and $D$ be the unit polydisc in $\mathbb{C}^{n-k}$.
\par Consider the maps $F_t:D\times B_{e^{\frac{t}{2}}}\to U$, $F_t(w,y)=(w,ye^{-\frac{t}{2}})$. Note that $F_t$ is the identity when restricting to $D\times\{0\}$. On $U$ we can write

\begin{align}
\omega_0(w,y)=&\sqrt{-1}\left(\sum_{i,j=1}^{n-k}g_{i\bar j}(w,y)dw^i\wedge d\bar w^j+\sum_{\alpha,\beta=1}^{k}g_{\alpha\bar\beta}(w,y)dy^\alpha\wedge d\bar y^{\beta}\right)\nonumber\\
&+2Re\sqrt{-1}\sum_{i=1}^{n-k}\sum_{\alpha=1}^{k}g_{i\bar\alpha}(w,y)dw^i\wedge d\bar y^{\alpha}
\end{align}
and then
\begin{align}
F_t^*\omega_0(w,b)=&\sqrt{-1}\sum_{i,j=1}^{n-k}g_{i\bar j}(w,ye^{-\frac{t}{2}})dw^i\wedge d\bar w^j+e^{-t}\sqrt{-1}\sum_{\alpha,\beta=1}^kg_{\alpha\bar\beta}(w,ye^{-\frac{t}{2}})dy^\alpha\wedge d\bar y^{\beta}\nonumber\\
&+2e^{-\frac{t}{2}}Re\sqrt{-1}\sum_{i=1}^{n-k}\sum_{\alpha=1}^kg_{i\bar\alpha}(w,ye^{-\frac{t}{2}})dw^i\wedge d\bar y^\alpha\nonumber.
\end{align}
Note that $F_t^*\omega_0$ converges on any compact subsets of $D\times\mathbb{C}^k$, as $t\to\infty$, to a smooth semi-positive real $(1,1)$-form $\eta=\sqrt{-1}\sum_{i,j=1}^{n-k}g_{i\bar j}(w,0)dw^i\wedge d\bar w^j$, which is strictly positive in fiber direction. Also note that
$$e^tF_t^*f^*\chi=\sqrt{-1}\sum_{\alpha,\beta=1}^k\chi_{\alpha\bar\beta}(w,ye^{-\frac{t}{2}})dy^\alpha\wedge d\bar y^{\beta}$$
converges on any compact subsets of $D\times\mathbb{C}^k$, as $t\to\infty$, to a semi-positive real $(1,1)$-form $\eta'=\sqrt{-1}\sum_{\alpha,\beta=1}^k\chi_{\alpha\bar\beta}(0)dy^\alpha\wedge d\bar y^{\beta}$, which is strictly positive in base direction. In conclusion,
$e^tF_t^*\omega_t=(1-e^{-t})e^tF_t^*f^*\chi+F_t^*\omega_0$ converges on any compact subsets of $D\times\mathbb{C}^k$, as $t\to\infty$, to a smooth K\"ahler metric $\eta+\eta'$, which is equivalent to the standard Euclidean metric $\omega_E$ on $D\times\mathbb{C}^k$. Therefore, for any $K'\subset\subset D\times\mathbb{C}^k$, by Lemma \ref{C2lem.4} one can find a constant $C=C_{K'}\ge1$ with
\begin{equation}\label{c0}
C^{-1}\omega_E\le e^tF_t^*\omega(t)\le C\omega_E
\end{equation}
on $K$, where $\omega_E$ is the Euclidean metric. On the other hand, we have
\begin{align}\label{c5}
Ric(e^tF_t^*\omega(t))=\frac{1}{1-e^{-t}}F_t^*\omega_0-\frac{e^{-t}}{1-e^{-t}}(e^tF_t^*\omega(t)).
\end{align}
With all the above preparations, now we are able to derive a $C^1$-estimate for $e^tF_t^*\omega(t)$, so called Calabi-type estimate, by modifying \cite{ShWe}. For simplicity, we write $\tilde\omega=\tilde\omega(t)=e^tF_t^*\omega(t)$. Firstly, we fix a slightly large $K''\subset\subset D\times\mathbb{C}^k$ containing $K'$ in its interior, so that, for some uniform constant $C=C_{K''}$,
\begin{equation}\label{c3}
C^{-1}\omega_E\le\tilde\omega(t)\le C\omega_E.
\end{equation}
Secondly, we choose a smooth cut-off function $\varrho$ supported in $K''$ with $\varrho\equiv1$ on $K'$. By \eqref{c3}, we can choose a $\varrho$ satisfying
$$|\partial\varrho|_{\tilde\omega(t)}+|\Delta_{\tilde\omega(t)}\varrho|\le C$$
for some uniform constant $C\ge1$. Let $\nabla^E,\Gamma^E$ be the connection and Christoffel symbols of $\omega_E$ and $\tilde\nabla,\tilde\Gamma$ those of $\tilde\omega$. Define $T$ to be the tensor that is difference of the Christoffel symbols of $\tilde\omega$ and $\omega_E$ (in fact, $\Gamma^E\equiv0$ and hence $T=\tilde\Gamma$) and $S=|T|_{\tilde\omega}^2$. Easily, we have
\begin{align}\label{c4}
S=|T|_{\tilde\omega}^2=|\nabla^E\tilde\omega|_{\tilde\omega}^2=\tilde g^{\bar li}\tilde g^{\bar qj}\tilde g^{\bar kp}\partial_i\tilde g_{j\bar k}\partial_{\bar l}\tilde g_{p\bar q}.
\end{align}
Using commutation formula, one obtains
\begin{align}\label{c6}
\Delta_{\tilde\omega}S=&|\tilde\nabla T|_{\tilde \omega}^2+|\overline{\tilde{\nabla}} T|_{\tilde\omega}^2+2Re(\tilde g^{\bar ji}\tilde g^{\bar qp}\tilde g_{k\bar l}\Delta_{\tilde\omega}T_{ip}^k\overline T_{jq}^l)\nonumber\\
&+\tilde g^{\bar bi}\tilde g^{\bar ja}\tilde g^{\bar qp}\tilde g_{k\bar l}T_{ip}^k\overline T_{jq}^l\tilde R_{a\bar b}+\tilde g^{\bar bp}\tilde g^{\bar qa}\tilde g^{\bar ji}\tilde g_{j\bar l}T_{ip}^k\overline T_{jq}^l\tilde R_{a\bar b}-\tilde g^{\bar ji}\tilde g^{\bar qp}T_{ip}^k\overline T_{jq}^l\tilde R_{k\bar l}.
\end{align}
By plugging \eqref{c5} into the last three terms of \eqref{c6}, and noting \eqref{c3} and the fact that, if we write $\bar\omega=F_t^*\omega_0$, then $0\le\bar\omega\le C\omega_{E}$ on $K''\times[1,\infty)$ for some uniform constant $C\ge1$ we have
\begin{align}\label{c7}
\Delta_{\tilde\omega}S\ge|\tilde\nabla T|_{\tilde \omega}^2+|\overline{\tilde{\nabla}} T|_{\tilde\omega}^2+2Re(\tilde g^{\bar ji}\tilde g^{\bar qp}\tilde g_{k\bar l}(\Delta_{\tilde\omega}T_{ip}^k)\overline T_{jq}^l)-CS.
\end{align}
Moreover, by the symmetry of curvature tensor and Bianchi identities we see
\begin{align}\label{c8}
\Delta_{\tilde\omega}T_{ip}^k&=\tilde g^{\bar ba}\tilde\nabla_a\tilde\nabla_{\bar b}T_{ip}^k=-\tilde g^{\bar dk}\tilde\nabla_i\tilde R_{p\bar d}=-\tilde g^{\bar dk}\tilde\nabla_i(\frac{1}{1-e^{-t}}\bar g_{p\bar d})=-\frac{1}{e^t-1}\tilde g^{\bar dk}(\tilde\nabla_i-\nabla^E_i)\bar g_{p\bar d}-\frac{1}{e^t-1}\tilde g^{\bar dk}\nabla^E_i\bar g_{p\bar d}\nonumber\\
&=\frac{1}{e^t-1}\tilde g^{\bar dk}T_{ip}^{e}\bar g_{e\bar d}-\frac{1}{e^t-1}\tilde g^{\bar dk}\nabla^E_i\bar g_{p\bar d}\nonumber\\
\end{align}
Note that for some constant $C\ge1$ we have $|\nabla^E\bar\omega|_{\omega_E}\le C$ on $K''\times[1,\infty)$.
Then by plugging \eqref{c8} into \eqref{c7} we see that
\begin{align}\label{c9}
\Delta_{\tilde\omega}S\ge|\tilde\nabla T|_{\tilde \omega}^2+|\overline{\tilde{\nabla}} T|_{\tilde\omega}^2-CS-C,
\end{align}
and hence
\begin{align}\label{c9.1}
\Delta_{\tilde\omega}(\varrho^2S)\ge\varrho^2(|\tilde\nabla T|_{\tilde \omega}^2+|\overline{\tilde{\nabla}} T|_{\tilde\omega}^2-CS)-CS+2Re\langle\tilde\nabla\varrho^2,\overline{\tilde\nabla}S\rangle_{\tilde\omega}\ge-CS,
\end{align}
where we have used the following
$$2Re\langle\tilde\nabla\varrho^2,\overline{\tilde\nabla}S\rangle_{\tilde\omega}\ge-C\varrho|\langle\tilde\nabla\varrho,\overline{\tilde\nabla}|T|_{\tilde\omega}^2\rangle_{\tilde\omega}|\ge-C\varrho|\tilde\nabla|T|_{\tilde\omega}^2|_{\tilde\omega}\ge-\varrho^2(|\tilde\nabla T|_{\tilde\omega}^2+|\overline{\tilde\nabla}T|_{\tilde\omega}^2)-CS$$
Also recall that, using \eqref{c3} and the Ricci lower bound given by \eqref{c5}, we have
\begin{align}\label{c10}
\Delta_{\tilde\omega}tr_{\tilde\omega}\omega_E&\ge-C+\tilde g^{\bar ji}\tilde g^{\bar qp} g_E^{\bar ba}\tilde\nabla_ig_{E,p\bar b}\tilde\nabla_{\bar j}g_{E,a\bar q}\nonumber\\
&=-C+\tilde g^{\bar ji}\tilde g^{\bar qp} g_E^{\bar ba}(-T_{ip}^eg_{E,e\bar b})(-\bar T_{ ja}^fg_{E,f\bar q})\nonumber\\
&\ge CS-C.
\end{align}
Now we combine \eqref{c9.1} and \eqref{c10} to see that, for a sufficiently large constant $A$,
\begin{equation}\label{c11}
\Delta_{\tilde\omega}(\varrho^2S+Atr_{\tilde\omega}\omega_E)\ge S-C.
\end{equation}
For any $t\in[1,\infty)$, we choose a maximal point $x_t\in \bar K''$ of $\varrho^2S+Atr_{\tilde\omega}\omega_E$. If $x_t\in\partial K''$, then
$$(\varrho^2S+Atr_{\tilde\omega}\omega_E)(x_t)\le Atr_{\tilde\omega}\omega_E(x_t)$$
is uniformly bounded from above; if $x_t\in K''$, then by applying the maximum principle to \eqref{c11} we also have $(\varrho^2S+Atr_{\tilde\omega}\omega_E)(x_t)$ is uniformly bounded from above. In conclusion
$$\sup_{K''\times[1,\infty)}(\varrho^2S+Atr_{\tilde\omega}\omega_E)\le C$$
for some uniform constant $C\ge1$. Therefore, using the fact that $\varrho\equiv1$ on $K'$, we see
$$\sup_{K'\times[1,\infty)}S\le C.$$
Now, we restrict $S$ to fiber $X_y$ and use the fact that $\tilde\omega|_{X_y}$ is equivalent to $\omega_{0,y}$ to conclude that
\begin{equation}\label{c12}
|e^t\omega(t)|_{X_y}|_{C^1(X_y,\omega_{0,y})}\le C.
\end{equation}
Moreover, from the arguments we easily see that the above bound $C$ can be chosen to be uniform when $y$ varies in a compact subset of $Y\setminus S$.
\par Lemma \ref{lemconv3} is proved.
\end{proof}

Before next step, we recall the following two facts from \cite[Section 2.3]{TWY}.

\begin{lem}\label{1lem}\cite[Lemma 2.4]{TWY}
For any given $K\subset\subset Y\setminus S$, consider a smooth function $P: f^{-1}(K) \times [1,\infty) \rightarrow \mathbb{R}$ which satisfies the following conditions:
\begin{itemize}
\item[(a)] There is a constant $A$ such that for all $y\in K$ and all $t\geq 1$
$$|\nabla (P|_{X_y})|_{\omega_{0,y}}\leq A.$$
\item[(b)] For all $y\in K$ and all $t\geq 1$ we have
$$\int_{x \in X_y}P(x,t) \bar\omega_{0,b}^n=0.$$
\item[(c)] There exists a function $G: [1, \infty) \rightarrow [0,\infty)$ such that $G(t) \rightarrow 0$ as $t \rightarrow \infty$ such that
$$\sup_{y\in f^{-1}(K)} P(y,t)\leq G(t),$$
for all $t\geq 1$.
\end{itemize}
Then  there is a constant $C$ such that $$\sup_{y\in f^{-1}(K)} |P(y,t)|\leq C G(t)^{\frac{1}{2n+1}}$$ for all $t$ sufficiently large.
\end{lem}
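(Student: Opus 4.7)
The plan is to argue by contradiction using the Lipschitz hypothesis (a) to convert the one-sided sup bound in (c) into a two-sided bound: if $P$ dips sharply below zero somewhere on a fiber, hypothesis (a) forces $P$ to remain negative on a geodesic ball of definite radius, and the zero-mean condition (b) then forces the positive part of $P$ elsewhere to be large enough to contradict (c).

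Concretely, fix $t\ge 1$ and $y\in K$, and set $M:=-\inf_{X_y}P(\cdot,t)$; we may assume $M>0$. Pick $x_0\in X_y$ with $P(x_0,t)=-M$. By hypothesis (a), $P(\cdot,t)|_{X_y}$ is $A$-Lipschitz with respect to $\omega_{0,y}$, hence
\begin{equation*}
P(x,t)\le -\tfrac{M}{2}\qquad\text{for every } x\in B_{\omega_{0,y}}\!\bigl(x_0,\tfrac{M}{2A}\bigr).
\end{equation*}
Since $K\subset Y\setminus S$ is compact and the family $\{(X_y,\omega_{0,y},\bar\omega_{0,y})\}_{y\in K}$ varies smoothly, it is uniformly bounded in $C^\infty$. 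In particular there exist constants $r_0,c>0$, depending only on $K$, such that
\begin{equation*}
\int_{B_{\omega_{0,y}}(x,r)}\bar\omega_{0,y}^{n-k}\ \ge\ c\, r^{2(n-k)}
\end{equation*}
for every $y\in K$, every $x\in X_y$, and every $r\in(0,r_0]$; similarly $\int_{X_y}\bar\omega_{0,y}^{n-k}\le V_0$ for some uniform $V_0$.

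Now split the integral in (b) using (c):
\begin{equation*}
0\ =\ \int_{X_y}P(\cdot,t)\,\bar\omega_{0,y}^{n-k}\ \le\ -\tfrac{M}{2}\int_{B_{\omega_{0,y}}(x_0,\,\min\{M/(2A),r_0\})}\bar\omega_{0,y}^{n-k}\ +\ G(t)\,V_0.
\end{equation*}
If $M/(2A)\le r_0$, substituting the volume lower bound and rearranging gives $M^{2(n-k)+1}\le C\,G(t)$, hence $M\le C'\,G(t)^{1/(2(n-k)+1)}$. If instead $M/(2A)>r_0$, then $M$ is bounded below by a positive constant, and the above inequality forces $G(t)$ to be bounded below, which fails for $t$ large. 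Either way, for $t$ large enough that $G(t)\le 1$,
\begin{equation*}
-\inf_{X_y}P(\cdot,t)\ \le\ C\,G(t)^{1/(2(n-k)+1)}\ \le\ C\,G(t)^{1/(2n+1)},
\end{equation*}
and combined with the sup bound (c) this yields the claim after taking the supremum over $y\in K$.

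The only nontrivial ingredient is the uniform non-collapsing volume estimate on small geodesic balls in the fibers, and this is immediate from compactness of $K$ inside the smooth locus $Y\setminus S$ together with the smooth dependence of $(X_y,\omega_{0,y},\bar\omega_{0,y})$ on the base point. Everything else is a direct maximum-principle style comparison, and the exponent $1/(2n+1)$ stated in the lemma is simply a uniform convenient choice that majorizes the sharper fiber-dimensional exponent $1/(2(n-k)+1)$.
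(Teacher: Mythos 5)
Your proposal is correct and reconstructs essentially the argument of \cite[Lemma 2.4]{TWY}, which the paper cites without reproducing: use the fiberwise Lipschitz bound from (a) to force $P\le -M/2$ on an $\omega_{0,y}$-ball of radius $\min\{M/(2A),r_0\}$ around a minimum point, combine the zero-mean condition (b) with the one-sided upper bound (c) to get $\tfrac{M}{2}\,\mathrm{Vol}(B)\le G(t)\,\mathrm{Vol}(X_y)$, and then insert the Euclidean-type volume lower bound $\mathrm{Vol}(B)\gtrsim r^{2(n-k)}$ (uniform over $K$ by compactness and smooth dependence of the fiber metrics) to obtain $M\lesssim G(t)^{1/(2(n-k)+1)}\le G(t)^{1/(2n+1)}$ once $G(t)\le 1$. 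The observation that the exponent $1/(2n+1)$ in the statement is just a convenient weakening of the sharper fiber-dimensional exponent is also right.
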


\begin{lem}\label{2lem}\cite[Lemma 2.6]{TWY}
Let $A$ be an $N \times N$ positive definite Hermitian symmetric matrix.  Assume that there exists $\epsilon\in (0,1)$ with
$$  \emph{tr} \, A   \le N + \epsilon, \quad   \det A   \ge 1-\epsilon.$$
Then there exists a constant $C_N$ depending only on $N$ such that
$$\| A- I \| \le C_N \sqrt{\epsilon},$$
where $\| \cdot \|$ is the Hilbert-Schmidt norm, and $I$ is the $N\times N$ identity matrix.
\end{lem}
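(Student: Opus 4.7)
The plan is to reduce this matrix inequality to a sharp scalar estimate on the eigenvalues, exploiting a quantitative version of the equality case of the AM-GM inequality. Since $A$ is Hermitian and positive definite, I would first diagonalize $A=U\Lambda U^*$ with $\Lambda=\mathrm{diag}(\lambda_1,\ldots,\lambda_N)$ and each $\lambda_i>0$. The Hilbert-Schmidt norm is unitarily invariant, so $\|A-I\|^2=\sum_{i=1}^N(\lambda_i-1)^2$, and the hypothesis becomes $\sum_i\lambda_i\le N+\epsilon$ together with $\prod_i\lambda_i\ge 1-\epsilon$. The task is thus to show $\sum_i(\lambda_i-1)^2\le C_N\epsilon$.

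Before invoking any convexity inequality I would extract uniform upper and lower bounds on the $\lambda_i$. One may harmlessly assume $\epsilon\le 1/2$: otherwise $\sqrt\epsilon$ is bounded below while the trace bound alone gives $\lambda_i\le N+\epsilon\le N+1$, so $\|A-I\|$ is controlled by an absolute constant and the conclusion is trivial after enlarging $C_N$. Granted $\epsilon\le 1/2$, the upper bound $\lambda_i\le N+1$ is immediate from $\mathrm{tr}(A)\le N+\epsilon$, and writing $\lambda_i=\det(A)\cdot\prod_{j\ne i}\lambda_j^{-1}$ with $\det(A)\ge 1/2$ and each $\lambda_j\le N+1$ yields the lower bound $\lambda_i\ge c_N:=2^{-1}(N+1)^{-(N-1)}$.

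The key analytic step is then the scalar inequality
\[
g(x):=x-1-\log x\ \ge\ \alpha_N(x-1)^2\qquad\text{for all }x\in[c_N,N+1],
\]
where $\alpha_N>0$ depends only on $N$. This follows from $g(1)=g'(1)=0$ together with a uniform positive lower bound on $g''(x)=x^{-2}$ on $[c_N,N+1]$, combined with Taylor's theorem. Summing over the eigenvalues gives
\[
\sum_i(\lambda_i-1)^2\ \le\ \alpha_N^{-1}\sum_i g(\lambda_i)\ =\ \alpha_N^{-1}\bigl(\mathrm{tr}(A)-N-\log\det(A)\bigr)\ \le\ \alpha_N^{-1}\bigl(\epsilon-\log(1-\epsilon)\bigr)\ \le\ C_N\epsilon,
\]
where I use $-\log(1-\epsilon)\le 2\epsilon$ for $\epsilon\le 1/2$. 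Taking square roots yields the desired estimate.

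There is no serious obstacle in this approach; the only mildly delicate point is the uniform lower bound on the eigenvalues, which must simultaneously use the determinant lower bound \emph{and} the trace upper bound to produce an explicit $N$-dependent constant $c_N>0$. Once that is in place, the rest is one-dimensional calculus applied to $g(x)=x-1-\log x$.
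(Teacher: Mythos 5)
Your proof is correct and is essentially the standard argument used in \cite[Lemma 2.6]{TWY}, which the present paper cites without reproducing: diagonalize, extract uniform two-sided eigenvalue bounds from the trace and determinant hypotheses, and then exploit the strict convexity of $x\mapsto x-1-\log x$ near $x=1$ via a Taylor bound on the compact interval. The only point worth keeping explicit in a final write-up is the one you already flag, namely that the lower bound $\lambda_i\ge c_N$ genuinely needs both $\det A\ge 1-\epsilon$ and the upper bound $\lambda_j\le N+1$ coming from the trace, and that the regime $\epsilon>1/2$ is handled separately by the crude bound $\|A-I\|\le N^{3/2}$.
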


Now we first prove the convergence of metric on smooth fibers. We point out that in \cite{TWY} one is given a Calabi-Yau fibration and hence there exists a canonical metric, i.e. Ricci-flat metric, in the initial class on any smooth Calabi-Yau fiber. Moreover, a key step in \cite{TWY} is to prove the restriction to a smooth Calabi-Yau fiber of a suitably normalized equation (namely, K\"ahler-Ricci flow or degeneration of Ricci-flat metrics) will converge to the Ricci-flat metric. In our case, the smooth fiber $X_y$ is a Fano manifold and we may not have a canonical metric on $X_y$. However, we do have defined an $\overline\omega_0$ in the initial class $[\omega_0]$, which is naturally associated to $\omega_0$ in the sense that $Ric(\overline\omega_{0,y})=\omega_{0,y}$. We now prove in the following Lemma \ref{local3} that, after restricting to a smooth fiber $X_y$, a suitably normalized continuity method will converge to $\overline\omega_{0,y}$. This is a key observation for later discussions.
\begin{lem}\label{local3}
For any given $K\subset\subset Y\setminus S$, there exists a positive function $G_1(t)$ such that for all $y\in K$ we have
\begin{equation}\label{local1'}
|e^t\omega(t)|_{X_y}-\overline\omega_{0,y}|_{C^0(X_y,\omega_{0,y})}\le G_1(t).
\end{equation}
\end{lem}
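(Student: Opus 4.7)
Write $\psi(t) := e^t(\varphi(t) - \bar\varphi_y(t))$ so that $e^t\omega(t)|_{X_y} = \omega_{0,y} + \sqrt{-1}\partial_f\bar\partial_f\psi(t)$ and the fiber Monge-Amp\`ere equation \eqref{fib} reads
$$(\omega_{0,y} + \sqrt{-1}\partial_f\bar\partial_f\psi(t))^{n-k} = F_y(t)\,\omega_{0,y}^{n-k}, \qquad \int_{X_y}\psi(t)\,\omega_{0,y}^{n-k} = 0.$$
The plan proceeds in two phases: (i) establish the uniform convergence $F_y(t)\to e^{\rho_y}$ on $f^{-1}(K)$; (ii) upgrade this to $C^0$-convergence of the metrics via $L^\infty$-stability of the Monge-Amp\`ere equation combined with the $C^1$-bound of Lemma \ref{lemconv3}. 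Note that the normalization $\int_{X_y}\psi(t)\omega_{0,y}^{n-k} = 0$ matches that of $u_y$, so uniqueness of the limiting Monge-Amp\`ere solution will identify it with $u_y$, yielding $e^t\omega(t)|_{X_y}\to\overline\omega_{0,y}$.

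The crux of Phase (i) is an algebraic identity expressing $F_y(t)/e^{\rho_y}$ in terms of quantities already controlled. Let $\lambda_1,\ldots,\lambda_k$ and $\mu_1,\ldots,\mu_k$ denote the $k$ nonzero eigenvalues of $f^*\chi$ and $f^*\omega_Y$ with respect to $\omega(t)$. Starting from the standard identity $\binom{n}{k}\omega(t)^{n-k}\wedge f^*\chi^k = \lambda_1\cdots\lambda_k\cdot\omega(t)^n$ (and its analog for $f^*\omega_Y$), and combining the Monge-Amp\`ere equation \eqref{lt1}, the definition \eqref{vol} of $\Omega$ together with the formula \eqref{equation} for $G$ and the fiber-direction identity $\overline\omega_0^{n-k}\wedge f^*\chi^k = e^{\rho}\omega_0^{n-k}\wedge f^*\chi^k$ on $X_{reg}$, and the Monge-Amp\`ere equation \eqref{limit} for $\omega_Y$ (which gives $\mu_1\cdots\mu_k = \lambda_1\cdots\lambda_k\cdot e^{f^*\psi}f^*G$), one verifies
$$\frac{F_y(t)}{e^{\rho_y}} = \mu_1\cdots\mu_k\cdot\exp\!\left(\frac{\varphi(t)}{1-e^{-t}} - f^*\psi\right).$$
By Lemma \ref{lemconv0} the exponential factor tends to $1$ uniformly on $f^{-1}(K)$, and by Lemma \ref{lemconv2} together with the arithmetic-geometric mean inequality, $\mu_1\cdots\mu_k \le (\sum_i\mu_i/k)^k \le 1 + o_K(1)$. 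Hence $F_y(t)/e^{\rho_y} \le 1 + o_K(1)$ uniformly on $f^{-1}(K)$. Stokes' theorem yields $\int_{X_y}F_y(t)\omega_{0,y}^{n-k} = V_0 = \int_{X_y}e^{\rho_y}\omega_{0,y}^{n-k}$, so the fiber average of $F_y(t)/e^{\rho_y} - 1$ against $\overline\omega_{0,y}^{n-k}$ vanishes; combined with the one-sided upper bound, this forces uniform $L^1$-convergence $F_y(t)/e^{\rho_y}\to 1$ on fibers. Lemma \ref{lemconv3} supplies the uniform $C^1(X_y,\omega_{0,y})$-bound on $F_y(t)$ needed in its hypothesis, so Lemma \ref{1lem} applied to $P = F_y(t)/e^{\rho_y} - 1$ upgrades this to the desired uniform convergence $F_y(t)\to e^{\rho_y}$ on $f^{-1}(K)$.

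For Phase (ii), Lemma \ref{C2lem.4} gives that $e^t\omega(t)|_{X_y}$ is uniformly equivalent to $\omega_{0,y}$ on $f^{-1}(K)$, while Lemma \ref{lemconv3} together with Schauder theory for the fiber Monge-Amp\`ere equation yields a uniform $C^{2,\alpha}(X_y,\omega_{0,y})$-bound on $\psi(t)$ for any $\alpha\in(0,1)$ and $y\in K$. The $C^0$-convergence $F_y(t)\to e^{\rho_y}$ from Phase (i) together with the Yau/Ko{\l}odziej $L^\infty$-stability of the complex Monge-Amp\`ere equation then forces $\psi(t)\to u_y$ in $C^0(X_y)$ uniformly in $y\in K$; interpolating against the $C^{2,\alpha}$-bound yields $C^{2,\beta}$ convergence for any $\beta<\alpha$, and taking $\sqrt{-1}\partial_f\bar\partial_f$ concludes \eqref{local1'}. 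The main technical obstacle is the algebraic derivation in Phase (i) that isolates the fiber-direction content of the global Monge-Amp\`ere equation on $X$; once this identity is in hand, the remaining analytic inputs (Lemmas \ref{lemconv0}, \ref{lemconv2}, \ref{lemconv3}, \ref{1lem}) combine in a standard way.
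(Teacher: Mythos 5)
Your Phase (i) reproduces the paper's first half exactly: the identity $P=\binom{n}{k}e^{\frac{\varphi(t)}{1-e^{-t}}-f^*\psi}\,\omega(t)^{n-k}\wedge f^*\omega_Y^k/\omega(t)^n$ is the same as your $F_y(t)/e^{\rho_y}=\mu_1\cdots\mu_k\,e^{\frac{\varphi(t)}{1-e^{-t}}-f^*\psi}$ (your derivation is correct), and the application of Lemma \ref{1lem} to $\tilde P=P-1$ using Lemmas \ref{lemconv0}, \ref{lemconv2}, \ref{lemconv3} is precisely what the paper does. Your Phase (ii), however, diverges genuinely from the paper. Rather than invoking Schauder theory, $L^\infty$-stability for the complex Monge--Amp\`ere equation, and interpolation in H\"older spaces, the paper stays entirely within the elementary framework of Lemmas \ref{1lem} and \ref{2lem}: it defines the normalized trace $Q=e^t\omega(t)|_{X_y}\wedge\overline\omega_{0,y}^{n-k-1}/\overline\omega_{0,y}^{n-k}$, checks by Stokes that $\tilde Q=1-Q$ has zero fiber mean, obtains the one-sided bound $\sup\tilde Q\le G_6(t)$ from the determinant control of Phase (i) via the arithmetic--geometric mean inequality, applies Lemma \ref{1lem} a second time to get $|Q-1|\le G_8(t)$, and then concludes directly from the matrix Lemma \ref{2lem}. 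Both routes reach the same conclusion, but the paper's avoids elliptic PDE machinery altogether, requires no uniqueness or stability theorem, and matches the strategy of \cite{TWY} on which this section is modeled. Two small remarks on your version: (a) the invocation of Schauder theory is superfluous --- Lemma \ref{lemconv3} is already a Calabi-type $C^1$-estimate for the metric, hence a $C^3$-bound on $\psi(t)$, which is more than you need; (b) if you do go the stability route, you should say explicitly why the stability constants are uniform over $y\in K$ (the fiber metrics $\omega_{0,y}$ vary smoothly and the densities $F_y(t)$, $e^{\rho_y}$ have uniform two-sided bounds on $f^{-1}(K)$, so this is a fillable detail, but it should be stated); in fact, since all the solutions in play are smooth and the metrics $\omega_{0,y}+\sqrt{-1}\partial_f\bar\partial_f\psi(t)$ are uniformly equivalent to $\omega_{0,y}$ on $f^{-1}(K)$ by Lemma \ref{C2lem.4}, a direct maximum-principle argument for the linearized difference equation gives the required $\|\psi(t)-u_y\|_{C^0}\lesssim\|F_y(t)-e^{\rho_y}\|_{C^0}$ without appealing to Ko{\l}odziej's theorem.
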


\begin{proof}
Write
\begin{align}
(e^t\omega(t)|_{X_y})^{n-k}&=\frac{(e^t\omega(t)|_{X_y})^{n-k}}{\overline\omega_{0,y}^{n-k}}\omega_{0,y}^{n-k}=e^{(n-k)t}\frac{\omega(t)^{n-k}\wedge f^*\omega_{Y}^k}{\overline\omega_{0}^{n-k}\wedge f^*\omega_{Y}^k}\overline\omega_{0,y}^{n-k}=\binom{n}{k}\frac{e^{(n-k)t}\omega(t)^{n-k}\wedge f^*\omega_{Y}^k}{e^{\psi}\Omega}\overline\omega_{0,y}^{n-k}\nonumber\\
&=\binom{n}{k}e^{\frac{\varphi(t)}{1-e^{-t}}-f^*\psi}\frac{\omega(t)^{n-k}\wedge f^*\omega_{Y}}{\omega(t)^n}\overline\omega_{0,y}^{n-k}\nonumber.
\end{align}
Then we define a function $P$ on $X\times[1,\infty)$ as follows:
$$P=\binom{n}{k}e^{\frac{\varphi(t)}{1-e^{-t}}-f^*\psi}\frac{\omega(t)^{n-k}\wedge f^*\omega_{Y}}{\omega(t)^n},$$
Easily,
$$P|_{X_y}=\frac{(e^t\omega(t))^{n-k}}{\overline\omega_{0,y}^{n-k}}.$$
We see that $\tilde P:=P-1$ satisfies the following properties:
\begin{itemize}
\item[(1)]$\int_{X_y}\tilde P\overline\omega_{0,y}^{n-k}=0$ for all $y\in K$;
\item[(2)]There exist a constant $C\ge1$ such that for all $y\in K$, $|\nabla\tilde P|_{X_y}|_{\omega_{0,y}}\le C$.
\item[(3)]As $t\to\infty$, $\sup_{f^{-1}(K)}\tilde P\le G_2(t)$ for some positive function $G_2(t)$ which converges to $0$ as $t\to\infty$.
\end{itemize}
Indeed, item (1) is obvious; item (2) follows from Lemma \ref{lemconv3} directly; for item (3), we use Lemma \ref{lemconv2} to see that
$$\binom{n}{k}\frac{\omega(t)^{n-k}\wedge f^*\omega_{Y}}{\omega(t)^n}\le\left(\frac{tr_{\omega(t)}f^*\omega_{Y}}{n-k}\right)^{n-k}\le 1+G_2(t),$$
and then, combining Lemma \ref{lemconv0}, we have item (3).
\par Using above items (1)-(3), we can conclude by Lemma \ref{1lem} that
$$\sup_{f^{-1}(K)}|\tilde P|\le G_4(t)$$
and hence
\begin{equation}\label{f1}
\|(e^t\omega(t)|_{X_y})^{n-k}-\overline\omega_{0,y}^{n-k}\|_{C^0(X_y,\omega_{0,y})}\le G_5(t).
\end{equation}
Next we define the following smooth function on $X_y$ for $y\in K$:
$$Q|_{X_y}=\frac{e^t\omega(t)|_{X_y}\wedge\overline\omega_{0,y}^{n-k-1}}{\overline\omega_{0,y}^{n-k}}.$$
Easily, $Q$ in fact smoothly depends in $y\in K$ and hence is a smooth function on $f^{-1}(K)$, which equals
$$Q=\frac{e^t\omega(t)\wedge\overline\omega_{0}^{n-k-1}\wedge f^*\omega_{Y}^k}{\overline\omega_{0}^{n-k}\wedge f^*\omega_{Y}^k}.$$
Set $\tilde Q=1-Q$, then $\tilde Q$ satisfies the following properties:
\begin{itemize}
\item[(1')]$\int_{X_y}\tilde Q\overline\omega_{0,y}^{n-k}=0$ for all $y\in K$;
\item[(2')]There exist a constant $C\ge1$ such that for all $y\in K$, $|\nabla\tilde Q|_{X_y}|_{\omega_{0,y}}\le C$.
\item[(3')]As $t\to\infty$, $\sup_{f^{-1}(K)}\tilde Q\le G_6(t)$ for some positive function $G_6(t)$ which converges to $0$ as $t\to\infty$.
\end{itemize}
Again, items (1') and (2') are clear to hold; for item (3'), we use the arithmetic-geometric means inequality to see that
$$(Q|_{X_y})^{n-k}\ge\frac{(e^t\omega(t)|_{X_y})^{n-k}}{\overline\omega_{0,y}^{n-k}}\ge1-G_7(t),$$
where in the last inequality we have used \eqref{f1}, so item (3') follows.
\par Therefore, using again Lemma \ref{1lem}, we have
$$\sup_{f^{-1}(K)}|\tilde Q|\le G_8(t),$$
which in particular implies that for all $y\in K$ we have
\begin{equation}\label{f2}
\|(e^t\omega(t)|_{X_y})\wedge\overline\omega_{0,y}^{n-k-1}-\overline\omega_{0,y}^{n-k}\|_{C^0(X_y,\omega_{0,y})}\le G_9(t).
\end{equation}
Now applying Lemma \ref{2lem} gives the desired conclusion \eqref{local1'}.
\par Lemma \ref{local3} is proved.
\end{proof}

We are ready to prove our main result in this section.
\begin{thm}\label{local4}
For any given $K\subset\subset Y\setminus S$, there exists a positive function $G_{10}(t)$ and a constant $T\ge1$ such that for all $t\in[T,\infty)$ we have
\begin{equation}\label{local4.1}
\|\omega(t)-f^*\omega_{Y}\|_{C^0(f^{-1}(K),\omega_0)}\le G_{10}(t).
\end{equation}
\end{thm}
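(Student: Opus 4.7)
The plan is to apply Lemma \ref{2lem} pointwise, comparing $\omega(t)$ to the reference K\"ahler form
\[
\eta_t := e^{-t}\overline\omega_0 + f^*\omega_Y,
\]
which is smooth and positive on $f^{-1}(K)$ for $t$ large and satisfies $\eta_t \le C\omega_0$ uniformly. Since $\|e^{-t}\overline\omega_0\|_{C^0(f^{-1}(K),\omega_0)}\to 0$, the theorem reduces to $\|\omega(t)-\eta_t\|_{C^0(f^{-1}(K),\omega_0)}\to 0$, which by comparability of $\eta_t$ and $\omega_0$ further reduces to pointwise uniform closeness of $\omega(t)$ and $\eta_t$ as Hermitian forms.

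First I would verify the pointwise volume ratio. Using the Monge-Amp\`ere equation \eqref{lt1}, the identity
\[
\Omega = \binom{n}{k}e^{-f^*\psi}\overline\omega_0^{n-k}\wedge f^*\omega_Y^k
\]
(a consequence of \eqref{vol}, \eqref{equation} and \eqref{limit}), and the binomial expansion
\[
\eta_t^n = \binom{n}{k}e^{-(n-k)t}\overline\omega_0^{n-k}\wedge f^*\omega_Y^k\,(1+O(e^{-t})),
\]
in which terms with more than $k$ factors of $f^*\omega_Y$ vanish since $\omega_Y$ lives on a $k$-dimensional base, together with Lemma \ref{lemconv0}, I would obtain
\[
\frac{\omega(t)^n}{\eta_t^n}=e^{\varphi(t)/(1-e^{-t})-f^*\psi}(1+O(e^{-t}))\longrightarrow 1
\]
uniformly on $f^{-1}(K)$.

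Next I would show $\mathcal{T}(t):=n^{-1}\mathrm{tr}_{\eta_t}\omega(t)\to 1$ uniformly. AM-GM on the eigenvalues of $\omega(t)$ relative to $\eta_t$, combined with the volume ratio, gives $\mathcal{T}(t)\ge(\omega(t)^n/\eta_t^n)^{1/n}\to 1$, so $1-\mathcal{T}(t)\le o(1)$. For the matching upper bound I would follow the template of the proof of Lemma \ref{local3} and apply Lemma \ref{1lem} to $1-\mathcal{T}(t)$: the supremum bound is the lower bound just noted; the fiber-gradient bound $|\nabla(\mathcal{T}(t)|_{X_y})|_{\omega_{0,y}}\le C$ follows from Lemma \ref{lemconv3} and the closedness of $\omega(t)$; and the fiber-average condition $\int_{X_y}\mathcal{T}(t)\,\overline\omega_{0,y}^{n-k}\to V_0$ is obtained by splitting
$\mathrm{tr}_{\eta_t}\omega(t)=\mathrm{tr}(A_V^{-1}B_V)+\mathrm{tr}(A_H^{-1}B_H)$
in the $\eta_t$-orthogonal decomposition $TX=V\oplus H$ (with $V$ vertical), handling the vertical piece via Lemma \ref{local3}, and evaluating the horizontal piece through a cohomological fiber-integral identity analogous to the one used in Lemma \ref{local3}.

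With $\omega(t)^n/\eta_t^n\to 1$ and $\mathrm{tr}_{\eta_t}\omega(t)\to n$ uniformly on $f^{-1}(K)$, Lemma \ref{2lem} applied in an $\eta_t$-orthonormal frame gives $\|\omega(t)-\eta_t\|_{\eta_t}\to 0$ uniformly; this transfers to the $\omega_0$-norm by comparability and, combined with $\|e^{-t}\overline\omega_0\|_{C^0(f^{-1}(K),\omega_0)}\to 0$, yields the theorem. The principal obstacle is the upper trace bound: while the vertical part of the fiber-average condition and the fiber-gradient bound for $\mathcal{T}(t)$ follow from Lemmas \ref{local3} and \ref{lemconv3} respectively, identifying the horizontal contribution as cohomologically pinned requires decomposing the $(n,n)$-form $\omega(t)\wedge\overline\omega_0^{n-k}\wedge f^*\omega_Y^{k-1}$ into its vertical and horizontal parts in the $V\oplus H$ splitting (mixed components vanishing automatically in that splitting) and then using closedness of $\omega(t)$ to reduce the resulting fiber integrals to cohomological invariants.
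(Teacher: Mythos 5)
The route you sketch is the right high--level strategy (compare $\omega(t)$ with a reference K\"ahler form built from $e^{-t}\overline\omega_0$ and $f^*\omega_Y$, verify the determinant and trace hypotheses of Lemma~\ref{2lem}, and conclude pointwise closeness), and your determinant step and the $e^{-t}\overline\omega_0\to0$ reduction agree with the paper. The problem is the upper trace bound, and it is a genuine gap rather than a missing detail.

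You take the trace in the direction $\operatorname{tr}_{\eta_t}\omega(t)$, obtain the lower bound from AM--GM plus the volume ratio, and then propose to get the matching upper bound from Lemma~\ref{1lem}, which requires knowing that the fiber average $\int_{X_y}\operatorname{tr}_{\eta_t}\omega(t)\,\overline\omega_{0,y}^{\,n-k}$ converges to $nV_0$. You split this into a vertical and a horizontal piece in the $\eta_t$-orthogonal decomposition. The vertical piece is fine: it reduces to $\int_{X_y}\operatorname{tr}_{\overline\omega_{0,y}}\bigl(e^t\omega(t)|_{X_y}\bigr)\,\overline\omega_{0,y}^{\,n-k}$, which Lemma~\ref{local3} pins to $(n-k)V_0$. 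But the horizontal piece $\int_{X_y}\operatorname{tr}\bigl[(\eta_t^{HH})^{-1}\omega(t)^{HH}\bigr]\overline\omega_{0,y}^{\,n-k}$ is \emph{not} cohomologically pinned, and the analogy with the $Q$-argument in the proof of Lemma~\ref{local3} does not transfer. In Lemma~\ref{local3}, the fiber-average identity $\int_{X_y}\tilde Q\,\overline\omega_{0,y}^{\,n-k}=0$ comes from the fact that $e^t\omega(t)|_{X_y}-\overline\omega_{0,y}$ is $\sqrt{-1}\partial_f\bar\partial_f$-exact \emph{along the closed fiber}, so it wedges against $\overline\omega_{0,y}^{\,n-k-1}$ to something that integrates to zero. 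The horizontal block $\omega(t)^{HH}$ is merely a smooth function on $X_y$ with values in Hermitian matrices on the horizontal space; its restriction to a fiber is not an exact form on $X_y$, and there is no Stokes-type cancellation available. Concretely, $\omega(t)^{HH}$ contains $(\sqrt{-1}\partial\bar\partial\varphi(t))^{HH}$, whose purely horizontal second derivatives have no a priori cohomological constraint over the fiber. The $(n,n)$-form manipulation you propose at the end does not fix this: decomposing $\omega(t)\wedge\overline\omega_0^{n-k}\wedge f^*\omega_Y^{k-1}$ into $V\oplus H$ pieces and using $d\omega(t)=0$ still leaves the purely horizontal contribution unconstrained by fiber integration.

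The paper avoids the issue entirely by taking the trace in the opposite direction, $\operatorname{tr}_{\omega(t)}\hat\omega(t)$, and using \textbf{Lemma~\ref{lemconv2}} -- a global maximum-principle estimate giving $\operatorname{tr}_{\omega(t)}f^*\omega_Y\le k+o(1)$ on $f^{-1}(K)$ -- to control the horizontal contribution directly, without any fiber-average argument. The vertical contribution $\operatorname{tr}_{e^t\omega(t)}\overline\omega_{0,y}$ is then handled via Lemma~\ref{local3}, and the remaining off-diagonal term $e^{-t}\operatorname{tr}_{\omega(t)}(\overline\omega_0-\overline\omega_{0,y})$ is shown to be $O(e^{-t/2})$ by Cauchy--Schwarz. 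Your proposal never invokes Lemma~\ref{lemconv2}, and without it (or a genuine replacement) the horizontal upper trace bound is not established. To repair the argument you would either need to prove a horizontal analogue of Lemma~\ref{lemconv2} adapted to $\operatorname{tr}_{\eta_t}\omega(t)$, or simply switch to the paper's trace direction and use Lemma~\ref{lemconv2} as stated. A secondary issue: Lemma~\ref{1lem} requires the fiber average to vanish \emph{exactly}, not just asymptotically, so even once the average is controlled you would need to subtract off the actual (possibly nonzero) average before applying the lemma.
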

\begin{proof}
We will make use of some arguments in \cite{TWY}. Set $\hat\omega(t):=e^{-t}\overline\omega_{0}+(1-e^{-t})f^*\omega_{Y}$ and choose a sufficiently large $T$ such that $\hat\omega(t)$ is a K\"ahler metric on $f^{-1}(K)$ for all $t\ge T$. Obviously, as $t\to\infty$,
\begin{equation}\label{f3.0}
\|\hat\omega(t)-f^*\omega_{Y}\|_{C^0(f^{-1}(K),\omega)}\to0.
\end{equation}
Moreover, after possibly increasing $T$, by Lemma \ref{lemconv0} we also have, for all $t\ge T$,
\begin{align}\label{f3}
\frac{\hat\omega(t)^n}{\omega(t)^n}&\ge\frac{\binom{n}{k}e^{-(n-k)t}\overline\omega_{0}\wedge f^*\omega_{Y}^k+o(e^{-(n-k+1)t})\Omega}{\omega(t)^n}\nonumber\\
&\ge e^{\frac{\varphi(t)}{1-e^{-t}}-f^*\psi}-Ce^{-t}\nonumber\\
&\ge1-G_{11}(t).
\end{align}
On the other hand, for any fix $x\in f^{-1}(K)$ and $y=f(x)\in K$ we write
\begin{align}\label{f3.1}
tr_{\omega(t)}\hat\omega(t)&=tr_{\omega(t)}f^*\omega_{Y}+tr_{\omega(t)}(e^{-t}\overline\omega_{0,y})+e^{-t}(-tr_{\omega(t)}f^*\omega_{Y}+tr_{\omega(t)}(\omega_{SRF}-\omega_{SRF,b})\nonumber\\
&\le tr_{\omega(t)}f^*\omega_{Y}+tr_{\omega(t)}(e^{-t}\overline\omega_{0,y})+e^{-t}tr_{\omega(t)}(\overline\omega_{0}-\overline\omega_{0,y})\nonumber\\
\end{align}
To bound the last term in \eqref{f3.1}, we now show that
\begin{equation}
|tr_{\omega(t)}(\overline\omega_{0}-\overline\omega_{0,y})|\le Ce^{\frac{t}{2}}.
\end{equation}
Indeed, if we choose a local chart $(U,z^1,\ldots,z^n)$ on $X$ centered at $x$ and a local chart $(V,z^{n-k+1},\ldots,z^{n})$ on $Y$ centered at $y$ such that $f$ is given by $(z^1,\ldots,z^n)\mapsto(z^{n-k+1},\ldots,z^n)$, then, since $\overline\omega_{0}-\overline\omega_{0,y}$ vanishes on fiber $X_y$, we can write
$$\overline\omega_{0}-\overline\omega_{0,y}=2Re\sqrt{-1}\sum_{\alpha=n-k+1}^n\sum_{i=1}^n\Psi_{\alpha\bar i}dz^{\alpha}\wedge d\bar z^{i}$$
for some smooth complex-valued functions $\Psi_{\alpha\bar i}$. Also note that by Lemma \ref{C2lem.4} and Cauchy-Schwarz inequality we have
$$|g^{\bar i\alpha}|\le Ce^{\frac{t}{2}}$$
whenever $n-k+1\le\alpha\le n$. Then
\begin{equation}\label{f4}
|tr_{\omega(t)}(\overline\omega_{0}-\overline\omega_{0,y})|=2|Re\sum_{\alpha=n-k+1}^n\sum_{i=1}^ng^{\bar i\alpha}\Psi_{\alpha\bar i}|\le Ce^{\frac{t}{2}}.
\end{equation}
Of course, the constant $C$ in the above inequality can be chosen to be uniform for all $x\in f^{-1}(K)$. Therefore, by plugging \eqref{f4} into \eqref{f3.1} and then using Lemmas \ref{lemconv2} and \ref{local3} we have
\begin{align}\label{f5}
tr_{\omega(t)}\hat\omega(t)&\le tr_{\omega(t)}f^*\omega_{Y}+tr_{\omega(t)}(e^{-t}\overline\omega_{0,y})+Ce^{-\frac{t}{2}}\nonumber\\
&=tr_{\omega(t)}f^*\omega_{Y}+tr_{(e^t\omega(t))}\overline\omega_{0,y}+Ce^{-\frac{t}{2}}\nonumber\\
&\le k+(n-k)+G_{12}(t)\nonumber\\
&=n+G_{12}(t)
\end{align}
Having \eqref{f3} and \eqref{f5}, we can apply Lemma \ref{2lem} to see that
$$\|\omega(t)-\hat\omega(t)\|_{C^0(f^{-1}(K),\omega(t))}\le G_{12}(t),$$
which, combining the fact by Lemma \ref{C2lem.4} that $\omega(t)\le C\omega_0$ on $f^{-1}(K)$ for some constant $C$, implies
\begin{equation}\label{f6}
\|\omega(t)-\hat\omega(t)\|_{C^0(f^{-1}(K),\omega_0)}\le G_{13}(t).
\end{equation}
Combining \eqref{f3.0} and \eqref{f6}, we have proved \eqref{local4.1}.
\par Theorem \ref{local4} is proved.
\end{proof}

\begin{proof}[Proof of Theorem \ref{t1}]
Combining Theorems \ref{L1conv} and \ref{local4}, Theorem \ref{t1} follows.
\end{proof}

\section*{Acknowledgements}
Y. Zhang is grateful to Professor Huai-Dong Cao for constant encouragement and support and Professor Chengjie Yu for constant help and invitation to visit Shantou University. Part of this work was carried out while Y. Zhang was visiting Capital Normal University and Shantou University, which he would like to thank for the warm hospitality. Both authors thank the referee for useful comments and suggestions.

\end{document}